\tikzset{>=latex} \usetikzlibrary{backgrounds}
\tikzset{
	wt/.style={circle, draw=black, fill=black!40!white, inner sep=2pt, outer sep=0pt, minimum size=5pt}, 
	axiscolour/.style={blue}, 
}
\numberwithin{equation}{section}
\theoremstyle{plain}
\newtheorem{theorem}{Theorem}[section]
\newtheorem{lemma}[theorem]{Lemma}
\newtheorem{prop}[theorem]{Proposition}
\newtheorem{cor}[theorem]{Corollary}
\newtheorem{assumption}[theorem]{Assumption}
\theoremstyle{definition}
\newtheorem{defn}[theorem]{Definition}
\newtheorem{remark}[theorem]{Remark}
\def\CC{\mathbb{C}}
\def\QQ{\mathbb{Q}}
\def\ZZ{\mathbb{Z}}
\newcommand\cC{\mathcal{C}}
\newcommand\cD{\mathcal{D}}
\newcommand\cE{\mathcal{E}}
\newcommand\cF{\mathcal{F}}
\newcommand\cG{\mathcal{G}}
\newcommand\cL{\mathcal{L}}
\newcommand\cP{\mathcal{P}}
\newcommand\cS{\mathcal{S}}
\newcommand\cW{\mathcal{W}}
\newcommand\cY{\mathcal{Y}}
\newcommand\cZ{\mathcal{Z}}
\newcommand\tilX{\widetilde{X}}
\newcommand\id{\textup{id}}
\newcommand\Hom{\textup{Hom}}
\newcommand\End{\textup{End}}
\renewcommand\sl{\mathfrak{sl}}
\newcommand\Vir{\textup{Vir}}
\newcommand\loc{\textup{loc}}
\newcommand\quash[1]{}
\newcommand\vac{\mathbf{1}}
\renewcommand\a\alpha
\renewcommand\b\beta
\newcommand\g\gamma
\renewcommand\d\delta
\newcommand\D\Delta
\title{Ribbon categories of weight modules for affine $\mathfrak{sl}_2$ at admissible levels}
\author{Thomas Creutzig}
\address{(Thomas Creutzig) Department Mathematik, Friedrich-Alexander Universit\"at Erlangen-N\"urnberg, 
91058 Erlangen, Germany}\email{thomas.creutzig@fau.de}
\author{Robert McRae}
\address{(Robert Mcrae) Yau Mathematical Sciences Center, Tsinghua University, Beijing 100084, China}\email{rhmcrae@tsinghua.edu.cn}
\author{Jinwei Yang}
\address{(Jinwei Yang) School of Mathematical Sciences, Shanghai Jiao Tong University, Shanghai 200240, China}
\email{jinwei2@sjtu.edu.cn}
\date{}
\subjclass[2020]{Primary 17B67, 17B69, 18M15, 81R10, 81T40}
\keywords{Affine Lie algebras, ribbon tensor categories, rigidity, vertex operator algebras, weight modules}
\begin{document}

\begin{abstract} 
We show that the braided tensor category of finitely-generated weight modules for the simple affine vertex operator algebra $L_k(\mathfrak{sl}_2)$ of $\mathfrak{sl}_2$ at any admissible level $k$ is rigid and hence a braided ribbon category. The proof uses a recent result of the first two authors with Shimizu and Yadav on embedding a braided Grothendieck-Verdier category $\mathcal{C}$ into the Drinfeld center of the category of modules for a suitable commutative algebra $A$ in $\mathcal{C}$, in situations where the braided tensor category of local $A$-modules is rigid. Here, the commutative algebra $A$ is Adamovi\'{c}'s inverse quantum Hamiltonian reduction of $L_k(\mathfrak{sl}_2)$, which is the simple rational Virasoro vertex operator algebra at central charge $1-\frac{6(k+1)^2}{k+2}$ tensored with a half-lattice conformal vertex algebra. As a corollary, we also show that the category of finitely-generated weight modules for the $N = 2$ super Virasoro vertex operator superalgebra at central charge $-6\ell-3$ is rigid for $\ell$ such that $(\ell+1)(k+2) = 1$. 
\end{abstract}

\maketitle

\tableofcontents

\section{Introduction}

Vertex operator algebras formalize in an algebraic way the notion of symmetry algebra of a two-dimensional conformal field theory. The axioms of a rational conformal field theory suggest that the representations of the associated vertex operator algebra should form a modular tensor category \cite{MS88}, and indeed Huang proved that the representation category of a strongly rational vertex operator algebra is a modular tensor category \cite{H3}. However, most vertex operator algebras are not rational. In particular, they usually have infinitely many inequivalent simple modules, as well as modules that are not completely reducible. Nonetheless, it is expected that under reasonably general conditions, representation categories of non-rational vertex operator algebras should still be braided ribbon categories, that is, rigid braided tensor categories with a ribbon twist.
Indeed, if a category of modules for a vertex operator algebra admits the monoidal category structure constructed by Huang, Lepowsky, and Zhang in \cite{HLZ1}-\cite{HLZ8}, then it automatically has a braiding and twist satisfying the balancing equation (see \cite[Section 12]{HLZ8} and \cite[Theorem 4.1]{H3}).
 But it is rather difficult in general to determine whether a given category of modules for a vertex operator algebra is a monoidal category, and if so, whether it is rigid.

 In the past, we have improved our understanding of vertex algebraic tensor categories, and their rigidity, by studying instructive examples.
 However, before the present work, together with the simultaneous independent papers \cite{Flor} and \cite{C2}, some of the prime examples of vertex operator algebras for logarithmic conformal field theory had not yet been fully understood. These are
 the simple affine vertex operator algebras $L_k(\sl_2)$ at Kac-Wakimoto admissible levels $k$ \cite{KW}, which correspond to fractional level $\mathfrak{sl}_2$ WZW models in physics. After extensive study in the mathematical physics literature, including the works \cite{Ga, LMRS, Ri1, Ri2, CR1, CR2}, it became clear that the correct category of $L_k(\sl_2)$-modules for conformal field theory is the category $\cC^{\mathrm{wt}}_k(\sl_2)$ of finitely-generated weight modules, since the characters of modules in this category are closed under modular transformations in a suitable sense. Based on these modular character transformations, the first author and Ridout conjectured a Verlinde formula for the Grothendieck fusion rules of $\cC^{\mathrm{wt}}_k(\sl_2)$ \cite{CR1, CR2}; this conjecture suggested that $\cC^{\mathrm{wt}}_k(\sl_2)$ should be a rigid braided tensor category.

 Objects of $\cC^{\mathrm{wt}}_k(\sl_2)$ consist of $L_k(\sl_2)$-modules on which the generator $h$ of the Cartan subalgebra of $\sl_2$ acts semisimply, such that each individual $h$-eigenspace decomposes into finite-dimensional generalized eigenspaces for the Virasoro $L_0$ operator, and such that the generalized $L_0$-eigenvalues on each $h$-eigenspace have a lower bound. This category is much larger than the Kazhdan-Lusztig category $KL_k(\sl_2)$ of ordinary $L_k(\sl_2)$-modules, which decompose into finite-dimensional $L_0$-eigenspaces. While $KL_k(\sl_2)$ is a braided fusion category \cite{CHY}, $\cC^{\mathrm{wt}}_k(\sl_2)$ is neither finite nor semisimple.
Other than some early works on classification of simple $L_k(\sl_2)$-modules and partial results on their fusion rules \cite{AM, DLM}, most rigorous mathematical results on $\cC^{\mathrm{wt}}_k(\sl_2)$ are rather recent \cite{A, RW, KR, ACK, C1}. Especially, the complete description of $\cC^{\mathrm{wt}}_k(\sl_2)$ as an abelian category \cite{ACK} and the existence of vertex algebraic tensor category structure on $\cC^{\mathrm{wt}}_k(\sl_2)$ \cite{C1} only appeared last year.

In the present work, we prove the following theorem using algebraic methods based on \cite{A, CMSY}; this result is also obtained in the simultaneous independent work \cite{Flor} using analytic methods: 
\begin{theorem}\label{thm:intro-main-thm}
The tensor category $\cC^{\mathrm{wt}}_k(\sl_2)$ of finitely-generated weight $L_k(\sl_2)$-modules is rigid and thus is a braided ribbon category for any admissible level $k=-2+\frac{u}{v}$, where $u,v\in\ZZ_{\geq 2}$ are relatively prime.
\end{theorem}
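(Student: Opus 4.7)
The strategy follows the algebraic framework outlined in the abstract: realize $L_k(\sl_2)$ as a subalgebra of a simpler vertex algebra $A$ via Adamović's inverse quantum Hamiltonian reduction (IQHR), regard $A$ as a commutative algebra object in $\cC^{\mathrm{wt}}_k(\sl_2)$, and then apply the CMSY embedding theorem to transfer rigidity from local $A$-modules to $\cC^{\mathrm{wt}}_k(\sl_2)$ itself. Concretely, $A = L^{\Vir}_{c_k} \otimes \Pi$, where $L^{\Vir}_{c_k}$ is the simple rational Virasoro VOA at central charge $c_k = 1 - \frac{6(k+1)^2}{k+2}$ and $\Pi$ is the half-lattice conformal vertex algebra appearing in Adamović's construction.

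The first main step is to verify rigidity of the category $\Rep^0(A)$ of local $A$-modules. Modules for $A$ should decompose (essentially) as external tensor products of Virasoro modules at $c_k$ with $\Pi$-modules; since Virasoro at this central charge is rational (yielding a semisimple modular tensor category) and since the $\Pi$-module category is known to form a rigid braided tensor category, a Deligne-type argument should identify $\Rep^0(A)$ with a rigid braided tensor category. Care is needed to handle the spectral flow orbits coming from the $\Pi$ factor and to control induction from $\cC^{\mathrm{wt}}_k(\sl_2)$ to $\Rep^0(A)$, but the essential rigidity propagates from the two rigid factors.

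The second step applies the CMSY theorem: given a commutative algebra $A$ in a braided Grothendieck-Verdier category $\cC$ with $\Rep^0(A)$ rigid, there is a braided tensor embedding $\cC \hookrightarrow Z(\Rep(A))$ into the Drinfeld center whose image consists of rigid objects. Taking $\cC = \cC^{\mathrm{wt}}_k(\sl_2)$, rigidity descends along this embedding to $\cC^{\mathrm{wt}}_k(\sl_2)$ itself, and the ribbon twist is then automatic from the HLZ braided tensor category structure via the balancing equation between braiding and twist.

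The main obstacle will be establishing rigidity of $\Rep^0(A)$ in the required form: one must carefully identify the braided tensor category structure on local $A$-modules arising from IQHR with the Deligne product of Virasoro and $\Pi$-module categories, including matching the spectral flow and typical/atypical structure of $\cC^{\mathrm{wt}}_k(\sl_2)$ under induction. A secondary check is verifying that $\cC^{\mathrm{wt}}_k(\sl_2)$ (or a suitable enlargement containing $A$) carries the Grothendieck-Verdier structure and the commutative algebra data required as input to the CMSY theorem.
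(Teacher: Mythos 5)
Your overall strategy matches the paper's: realize $A=\Vir_{c_k}\otimes\Pi(0)$ as a commutative algebra in $\cC=\cC^{\mathrm{wt}}_k(\sl_2)$ via Adamovi\'{c}'s embedding, identify $\cC_A^{\loc}$ with the Deligne product $\cC^{\Vir}_k\boxtimes\cC^{\mathrm{wt}}_{\Pi(0)}$ of two rigid categories (this is exactly how Theorem \ref{thm:CAloc-rigid} is proved), and invoke the CMSY embedding into a Drinfeld center. However, there is a genuine gap: you have misstated the input to the CMSY theorem. Rigidity of $\cC_A^{\loc}$ alone does \emph{not} yield a fully faithful, duality-compatible embedding $\cC\hookrightarrow\cZ(\cC_A)$, and hence does not let rigidity ``descend.'' Theorem \ref{rigidity-of-C-from-CAloc} requires, in addition: (1) that \emph{every simple object of $\cC_A$} (not merely of $\cC_A^{\loc}$) is local, which is what makes $\cC_A$ itself rigid and hence makes $\cZ(\cC_A)$ rigid; (2) that for every simple $X$ in $\cC$ any nonzero $\cC_A$-morphism $\cF(X')\rightarrow\cF(X)^*$ is an isomorphism, which is what forces induction to commute with the duality functors; and (3) a M\"{u}ger-type non-degeneracy condition on $\cC$-subobjects of $A$. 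These three verifications are the bulk of the paper and none of them is addressed in your proposal beyond a passing remark that induction ``needs care.''

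Concretely, closing the gap requires: a new fusion-rule computation $\cD_{1,1}^+\boxtimes\cD_{r,s}^-$ (Theorem \ref{thm:nonzero}), carried out via the singular vector \eqref{eqn:singular_vector} in $\cD_{1,1}^+$, which is what pins down the composition series of the induced module $\cF(A)$ (Lemma \ref{lemma:Ncomposition}); a multiplicity-counting argument based on Lemma \ref{lemma:CLR} and Frobenius reciprocity to show that all simple objects of $\cC_A$ are local (Theorem \ref{thm:simple-local}); an explicit determination of $\cF(X)$ for every simple $X$ in $\cC$, including the Loewy structure of the projective covers $R_{r,s,\lambda,\ell}$ in $\cC_A$, in order to verify condition (2) (Theorems \ref{thm:indsimple1} and \ref{thm:ind-simple-atypical}); and the combination of non-degeneracy of $\cC_A^{\loc}$ with the fact that $A$ has length two as an $L_k(\sl_2)$-module to verify condition (3) (Theorem \ref{thm:centralizer} and Corollary \ref{cor:condition-3-for-sl2}). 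Your secondary check (Grothendieck--Verdier structure and the commutative algebra data) is indeed needed but is the easy part, following from \cite{ALSW} and \cite{HKL}. As written, your argument stops exactly where the real difficulty begins.
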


We do not need to consider non-negative integer admissible levels $k=-2+u$, since $\cC^{\mathrm{wt}}_{-2+u}(\sl_2)$ is just the modular tensor category of modules for the rational vertex operator algebra $L_{-2+u}(\sl_2)$. 
Besides rigidity, fusion rules for simple modules in $\cC^{\mathrm{wt}}_k(\sl_2)$ are also obtained in \cite{Flor}. More generally, the simultaneous work \cite{C2} proves (under some conditions) the Verlinde formula in logarithmic conformal field theory and applies it to $L_k(\sl_2)$ at admissible levels. 
As a consequence, one obtains the fusion rules for $\cC^{\mathrm{wt}}_k(\sl_2)$. 
With these works, it is fair to say that the category of finitely-generated weight $L_k(\sl_2)$-modules for $k$ admissible is now understood as both an abelian and a monoidal category. 

To further motivate Theorem \ref{thm:intro-main-thm} and discuss our proof, we recall some of the recent example-driven development of the theory of vertex algebraic tensor categories. The study of ordinary modules for simple affine vertex operator algebras $L_k(\mathfrak{g})$ at admissible levels $k$ \cite{CHY} and of so-called $C_1$-cofinite modules for the Virasoro algebra \cite{Vir} led to a general criterion for the category of $C_1$-cofinite modules for a vertex operator algebra to admit the braided tensor category structure of \cite{HLZ8}.
 See in particular \cite[Theorem~3.6]{CY}, and also \cite[Theorem~2.3]{Mc2}; the most general statement is as follows:
\begin{theorem}[\cite{CMOY}]\label{thm:vosa-btc}
Let $\cC_1(V)$ be the category of $C_1$-cofinite grading-restricted generalized modules for a vertex operator superalgebra $V$. If $\cC_1(V)$ is closed under contragredient modules, then $\cC_1(V)$ admits the braided tensor category structure of \cite{HLZ1}-\cite{HLZ8}. In particular, $\cC_1(V)$ is a braided tensor category if the following two conditions hold:
\begin{enumerate}
    \item[(a)] The contragredient $W'$ of any simple $C_1$-cofinite $V$-module $W$ is $C_1$-cofinite.
    \item[(b)] $\cC_1(V)$ is equal to the category of finite-length $C_1$-cofinite $V$-modules whose composition factors are $C_1$-cofinite.
\end{enumerate}
\end{theorem}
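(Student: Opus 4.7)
The plan is to verify the hypotheses of the Huang--Lepowsky--Zhang tensor category construction directly on $\cC_1(V)$. That machinery produces a braided tensor category structure on a category of grading-restricted generalized $V$-modules once three conditions hold: (i) the category is closed under contragredients; (ii) for each $z\in\CC^\times$ and each pair of objects $W_1,W_2$, the $P(z)$-tensor product $W_1\boxtimes_{P(z)}W_2$ exists and lies in the category; and (iii) iterated compositions of intertwining operators converge in the required regions and admit analytic continuation yielding the associativity isomorphisms. Condition (i) is precisely the standing hypothesis of the theorem, so the real content is to establish (ii) and (iii) from $C_1$-cofiniteness.

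For (ii) and (iii), the key input is Miyamoto's observation (suitably extended to vertex operator superalgebras): $C_1$-cofiniteness is the algebraic condition ensuring that matrix coefficients of products and iterates of intertwining operators satisfy systems of regular-singular ODEs in the insertion variables whose solution spaces are finite-dimensional. This forces convergence on the appropriate domains together with the analytic continuation that implements the HLZ associativity. Combined with contragredient closure, the same ODE systems show that the HLZ $P(z)$-tensor product of two $C_1$-cofinite modules is again grading-restricted and $C_1$-cofinite, hence lies in $\cC_1(V)$; contragredient closure is essential here because $W_1\boxtimes_{P(z)}W_2$ is defined as a representing object for a functor involving maps into contragredients, and without (i) the universal property cannot be tested within the category. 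Once (i)--(iii) are in place, HLZ provides the associativity, braiding, and twist, and the balancing equation then follows automatically from \cite[Section 12]{HLZ8}.

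For the ``In particular'' clause I would deduce contragredient closure from (a) and (b) by an exactness argument. Given $W\in\cC_1(V)$, assumption (b) supplies a finite composition series for $W$ whose simple composition factors $L_1,\dots,L_n$ are $C_1$-cofinite. Since the contragredient functor is exact and contravariant, $W'$ acquires a finite composition series with simple factors $L_n',\dots,L_1'$, each of which is $C_1$-cofinite by (a). Thus $W'$ is a finite-length module all of whose composition factors are $C_1$-cofinite, so $W'\in\cC_1(V)$ by the other direction of (b); this reduces the ``in particular'' assertion to the main statement.

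The main obstacle is the $C_1$-cofinite ODE analysis underlying (ii) and (iii). In the vertex operator algebra case this is delicate but by now well developed; the VOSA generalization requires careful tracking of the $\ZZ/2$-grading and Koszul signs in the derivation of the differential equations and in their monodromy, but introduces no essentially new analytic difficulty. The single hardest step is showing not merely that $W_1\boxtimes_{P(z)}W_2$ is representable, but that the representing object is again $C_1$-cofinite; this requires controlling the size of the coinvariants cut out by the $C_1$-relations on a tensor product, and it is precisely here that both contragredient closure and the finite-dimensionality of the relevant ODE solution spaces are combined.
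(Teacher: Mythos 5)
The paper does not actually prove this statement: it is imported from \cite{CMOY}, which is listed as in preparation, so there is no in-paper argument to compare your proposal against. Your outline is the standard route taken in the antecedent works \cite{CY, Vir, Mc2} and is essentially correct: closure under contragredients is the standing hypothesis needed because the HLZ $P(z)$-tensor product is realized inside $(W_1\otimes W_2)^*$; convergence, extension, and associativity come from the finite-dimensional regular-singular ODE systems satisfied by matrix coefficients of (iterated) intertwining operators among $C_1$-cofinite modules; and the ``in particular'' clause follows from exactness and contravariance of $(-)'$ together with the fact that a finite-length module with $C_1$-cofinite composition factors is itself $C_1$-cofinite. Two small imprecisions worth noting: the differential-equation theory is due to Huang rather than Miyamoto (Miyamoto's contribution is the algebraic coinvariants argument showing the fusion product of $C_1$-cofinite modules is $C_1$-cofinite and lower bounded), and it is that algebraic argument, not the ODE systems, that puts the representing object back into $\cC_1(V)$; neither point affects the soundness of your outline.
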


This theorem applies to the categories of $C_1$-cofinite modules for simple affine vertex operator algebras at all admissible and at least many non-admissible levels \cite{CHY, CY}, Virasoro vertex operator algebras at all central charges \cite{Vir}, the vertex operator superalgebra of $\mathfrak{gl}_{1\vert 1}$ \cite{CMY3}, the singlet vertex operator algebras \cite{CMY1, CMY2}, the universal affine vertex operator algebra of $\sl_2$ at admissible levels \cite{MY2}, and the $N=1$ super Virasoro vertex operator superalgebras at all central charges \cite{CMOY}.
But it does not apply to the category $\cC^{\mathrm{wt}}_k(\sl_2)$ of finitely-generated weight $L_k(\sl_2)$-modules at admissible levels, since the vast majority of modules in this category are not $C_1$-cofinite.
Nonetheless, Theorem \ref{thm:vosa-btc} applies to the weight module category of the Kazama-Suzuki dual \cite{KS} of $L_k(\sl_2)$, which is the simple $N=2$ super Virasoro algebra at central charge $\frac{3k}{k+2}$. The Kazama-Suzuki duality is good enough that vertex algebraic tensor category structure on $\cC^{\mathrm{wt}}_k(\sl_2)$ can then be inferred from that on the category of weight modules for the dual $N=2$ super Virasoro algebra \cite{C1}.

Now once one has tensor category structure on a category of modules for a non-rational vertex operator algebra, there are a few approaches for proving rigidity.
One method used in previous work is to exploit equivalences with known rigid categories. For example, the ordinary modules of simple admissible affine vertex operator algebras in types $ADE$ can be related to the modules of rational principal $W$-algebras \cite{C}. But one cannot expect such nice relations in general.
Another method uses explicit analysis of correlation functions to prove rigidity of one or more relatively easy modules in the category, which then generate all (or at least enough) of the category through fusion. This method works well in ``low-rank'' examples such as the $\beta\gamma$-ghost vertex operator algebra \cite{AW} (see also \cite{CMY5}), the singlet algebras \cite{CMY1,CMY2}, Virasoro vertex operator algebras \cite{MY-cp1-Vir}, and the affine vertex operator superalgebra of $\mathfrak{gl}_{1|1}$ \cite{CMY3}. But this method is also cumbersome since it requires analysis (such as deriving and solving ordinary differential equations) as well as good knowledge of fusion rules.
It is also doubtful whether this analytic method is feasible for more complicated vertex operator algebras. For example, the correlation functions for the modules needed to generate $\cC^{\mathrm{wt}}_k(\sl_2)$ through fusion seem to satisfy rather complicated third-order differential equations that might be difficult to solve explicitly.
Thus better methods for proving rigidity of vertex algebraic tensor categories are needed.

Here, we prove rigidity of $\cC^{\mathrm{wt}}_k(\sl_2)$ in Theorem \ref{thm:intro-main-thm} using a new method introduced in \cite{CMSY}. No analysis is required, and we only need one relatively easy fusion rule in $\cC^{\mathrm{wt}}_k(\sl_2)$.
The idea is to exploit the observation that
many vertex operator algebras $V$ admit a conformal embedding into a larger conformal vertex algebra $A$ whose representation theory is known, semisimple, and rigid. For example, the singlet algebras embed into a rank $1$ Heisenberg vertex operator algebra, and $L_k(\mathfrak{sl}_2)$ for non-integral admissible $k$ embeds into a rational Virasoro vertex operator algebra tensored with a half-lattice conformal vertex algebra \cite{A}. Formalizing this situation, we have a category $\cC$ of $V$-modules which we assume is a braided tensor category. Then $A$ has the structure of a commutative algebra in $\cC$ \cite{HKL}, and there is then a tensor category $\cC_A$ of modules for this commutative algebra and a braided tensor subcategory $\cC_A^{\loc}$ of local modules. 
The vertex algebraic tensor category of $A$-modules that lie in $\cC$ is isomorphic to $\cC_A^{\loc}$ \cite{CKM-ext}. It turns out that under suitable conditions, information about $\cC_A^{\loc}$ and $\cC_A$ gives important information about $\cC$, such as the Verlinde formula \cite{C2}, Kazhdan-Lusztig correspondences with quantum groups \cite{CLR}, and most importantly for the present work, rigidity \cite{CMSY}. 

Let $V$ be a vertex operator algebra and let $\cC$ be a braided tensor category of $V$-modules. Although $\cC$ is not automatically rigid, if it is closed under the contragredient modules of \cite{FHL}, then it has the weaker duality structure of a Grothendieck-Verdier category \cite{ALSW}. This structure is given by an anti-equivalence $D: \cC \rightarrow \cC$ such that $\Hom_\cC( - \boxtimes X, D\vac) \cong \Hom_\cC( - , DX)$, where $\vac$ is the tensor identity of $\cC$. In the vertex operator algebra setting, $\vac = V$ and $D$ is given by contragredient duals, $DX = X'$. A braided Grothendieck-Verdier category is called ribbon if it has a twist $\theta$ that satisfies the usual balancing equation and is compatible with the duality structure, in the sense that $\theta_{DX} = D\theta_X$ for any object $X$ in $\cC$. A vertex algebraic Grothendieck-Verdier category is always ribbon as long as the vertex operator algebra $V$ is $\ZZ$-graded. A Grothendieck-Verdier category is called an r-category if $D\vac \cong \vac$, which in the vertex operator algebra setting means that $V$ is isomorphic to its own contragredient. 
The first rigidity result of \cite{CMSY} that is important for our purposes here is the following; in this theorem, the condition $\theta_A^2 = \id_A$ in vertex operator algebra terms is that $A$ is $\frac{1}{2} \ZZ$-graded by conformal weights:
\begin{theorem}[\cite{CMSY}, Theorem 3.14]\label{thm:intro-CAloc-to-CA}
    Let $(\cC,D,\theta)$ be a locally finite abelian ribbon Grothendieck-Verdier category and let $A$ be a commutative algebra in $\cC$ such that $\theta_A^2=\id_A$. If the abelian braided monoidal category $\cC_A^\loc$ is rigid and every simple object of $\cC_A$ is an object of $\cC_A^\loc$, then $\cC_A$ is rigid.
\end{theorem}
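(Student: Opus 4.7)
The plan is to inherit a Grothendieck--Verdier structure on $\cC_A$ from the one on $\cC$, verify rigidity for simple objects using the hypothesis, and then extend to all objects by induction on composition length. First, I would establish that $\cC_A$ is itself a locally finite abelian Grothendieck--Verdier category, with dualizing object expected to be the induction $K_A := A \otimes D\vac$ of the dualizing object of $\cC$, and dualizing functor $D_A M := \uHom_A(M, K_A)$. Local finiteness of $\cC_A$ descends from $\cC$ because the induction functor $X \mapsto A \otimes X$ is exact and the forgetful functor preserves length. The hypothesis $\theta_A^2 = \id_A$ is what ensures the ribbon twist descends compatibly to $\cC_A$ and that the resulting Grothendieck--Verdier structure on $\cC_A$ is ribbon.

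Next, by hypothesis every simple $S \in \cC_A$ belongs to $\cC_A^{\loc}$, and $\cC_A^{\loc}$ is rigid, so $S$ admits a rigid dual $S^\vee$ inside $\cC_A^{\loc} \subseteq \cC_A$. By the uniqueness of duals in a Grothendieck--Verdier category, $S^\vee$ must coincide with $D_A S$, and the evaluation and coevaluation morphisms witnessing rigidity of $S$ in $\cC_A^{\loc}$ already live in $\cC_A$ and make $D_A S$ into a two-sided dual of $S$ there.

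For the induction, every object $M \in \cC_A$ has finite composition length since $\cC_A$ is locally finite. Given $M$ of length $\geq 2$, fix a short exact sequence $0 \to S \to M \to N \to 0$ with $S$ simple (rigid by the previous step) and $N$ of smaller length (rigid by induction). Rigidity of $M$ is equivalent to the natural morphism $\eta_{M,X}: D_A M \otimes_A X \to \uHom_A(M, X)$ being an isomorphism for all $X \in \cC_A$. This transformation sits in a morphism of exact sequences obtained by applying $(-)\otimes_A X$ and $\uHom_A(-,X)$ to the chosen short exact sequence. Because $\eta_{S,X}$ and $\eta_{N,X}$ are isomorphisms and the relevant functors are exact (using that rigid objects are flat on the tensor side), the five-lemma forces $\eta_{M,X}$ to be an isomorphism as well, completing the induction.

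The main obstacle lies in Steps 1 and 3. Step 1 requires carefully identifying the correct dualizing object in $\cC_A$ and verifying that the twist and Grothendieck--Verdier duality of $\cC$ descend in a ribbon-compatible way, an analysis that truly hinges on $\theta_A^2 = \id_A$. Step 3 depends on enough exactness of $\otimes_A X$ and $\uHom_A(-, X)$ to run the five-lemma; this is not automatic in a general monoidal abelian category but becomes available here because, as the induction progresses, one may restrict attention to objects whose composition factors are rigid (and hence flat), and the Grothendieck--Verdier anti-equivalence $D_A$ is itself exact.
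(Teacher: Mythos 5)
This statement is imported verbatim from \cite{CMSY} (Theorem 3.14); the paper contains no proof of it, so your proposal can only be measured against the cited source and on its own terms. Your architecture --- (i) put a Grothendieck--Verdier structure on $\cC_A$, (ii) get rigidity of simple objects from locality, since $\cC_A^\loc$ is a full tensor subcategory of $\cC_A$, (iii) induct on composition length --- is a reasonable reconstruction. However, two of your key identifications are wrong. The dualizing object of $\cC_A$ is not $\cF(D\vac)=A\boxtimes D\vac$ but $D_A(A)$, whose underlying $\cC$-object is $D(A)$ (this is exactly the content of the cited \cite[Theorem 3.9]{CMSY}: $\cG\circ D_A=D$). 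In the paper's application $D\vac\cong\vac$, so your candidate is $A$ itself, whereas $D(A)=A'$ is visibly non-isomorphic to $A$: by \eqref{eqn:A-as-Lk(sl2)-mod}, $A$ has simple socle $\cL_{1,0}$ and simple top $\sigma(\cD_{1,1}^+)$, and $A'$ has these reversed (up to contragredient). Hence $\cC_A$ is generally \emph{not} an r-category, and your Step 2 claim that ``by uniqueness of duals, $S^\vee$ must coincide with $D_AS$'' is false: for a rigid object the Grothendieck--Verdier dual is the rigid dual twisted by the dualizing object, $D_AS\cong S^\vee\boxtimes_A D_A(A)$, as one already sees for $S=A$ (where $A^\vee=A$ but $D_AA=D(A)\ncong A$). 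This error propagates into Step 3, where your rigidity criterion via $\eta_{M,X}\colon D_AM\otimes_A X\to\uHom_A(M,X)$ implicitly requires the evaluation to land in the unit rather than in the dualizing object; you must either carry $S^\vee$ rather than $D_AS$ through the induction or track the twist by $D_A(A)$ everywhere.

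The exactness input to your five-lemma is also not what you assert. Flatness of the rigid objects $S$ and $N$ gives exactness of $S\otimes_A-$ and $N\otimes_A-$, but the row you need is $-\otimes_A X$ applied to $0\to D_AN\to D_AM\to D_AS\to 0$ for an \emph{arbitrary} (hence not yet flat) $X$, so you only get right exactness there; dually, $\uHom_A(-,X)$ is only left exact. The conclusion can still be extracted by a four-lemma-style chase using these partially exact rows together with exactness of $D_A$, but your stated justification does not supply it, and you would additionally need to prove that ``$\eta_{M,X}$ is an isomorphism for all $X$'' really yields evaluation and coevaluation morphisms satisfying the zigzag identities. Finally, the hypothesis $\theta_A^2=\id_A$ is never actually used in your argument: beyond ensuring (via \cite[Theorem 3.11]{CMSY}) that $\cC_A^\loc$ is a ribbon Grothendieck--Verdier subcategory compatible with $D_A$, the ribbon structure is what converts the one-sided duals produced by the Grothendieck--Verdier formalism into two-sided duals, and your proposal does not address the left-versus-right dual issue at all.
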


We now want to use this result to infer rigidity of $\cC$. For this, the monoidal induction functor $\cF: \cC \rightarrow \cC_A$ given on objects by $X\mapsto A\boxtimes X$ is useful. 
Also, for an object $X$ in  an r-category $\cC$, let $e_X: DX\boxtimes X\rightarrow\vac$ be the image of $\id_{DX}$ under the isomorphism $\Hom_\cC(DX,DX)\rightarrow\Hom_\cC(DX\boxtimes X,\vac)$. In the setting of Theorem \ref{thm:intro-CAloc-to-CA}, let $\iota_A:\vac\rightarrow A$ be the unit morphism of the commutative algebra $A$, and let $c_{-,-}$ denote the braiding of $\cC$. Then the main result of \cite{CMSY} is the following:
\begin{theorem}[\cite{CMSY}, Theorem 3.21]\label{thm:intro-C-rigid}
Let $(\cC,D,\theta)$ be a locally finite abelian ribbon r-category, and let $A$ be a commutative algebra in $\cC$ such that $\theta_A^2=\id_A$ and the unit morphism $\iota_A:\vac\rightarrow A$ is injective. Also assume the following:
\begin{enumerate}
\item[(1)] $\cC_A^{\loc}$ is rigid and every simple object of $\cC_A$ is an object of $\cC_A^{\loc}$.

\item[(2)] For any simple object $X$ in $\cC$, the map $e_X: DX\boxtimes X\rightarrow\vac$ is surjective and any non-zero $\cC_A$-morphism from $\cF(DX)$ to $\cF(X)^*$ is an isomorphism.

\item[(3)] For any $\cC$-subobject $s: S\hookrightarrow A$ such that $S$ is not contained in $\mathrm{Im}\,\iota_A$, there exists an object $Z\in\cC$ such that $c_{Z,S}\neq c_{S,Z}^{-1}$ and the map $s\boxtimes\id_Z: S\boxtimes Z\rightarrow A\boxtimes Z$ is injective.

\end{enumerate}
Then $\cC$ is rigid.
\end{theorem}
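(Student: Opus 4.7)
The plan is to transfer rigidity from $\cC_A$ back to $\cC$ through the induction functor $\cF:\cC\to\cC_A$, $X\mapsto A\boxtimes X$, by constructing for each simple $X\in\cC$ a coevaluation $i_X:\vac\to X\boxtimes DX$ that pairs with $e_X$ to exhibit $DX$ as a two-sided dual. Hypothesis (1) together with Theorem \ref{thm:intro-CAloc-to-CA} already gives that $\cC_A$ is rigid, so each $\cF(X)$ has a dual $\cF(X)^*$ in $\cC_A$. The first step is to identify this dual: I would construct a canonical $\cC_A$-morphism $\phi_X:\cF(DX)\to\cF(X)^*$ as the mate under the rigidity adjunction of the induction $\cF(e_X):\cF(DX)\boxtimes_A\cF(X)\cong\cF(DX\boxtimes X)\to\cF(\vac)=A$. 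Surjectivity of $e_X$ from hypothesis (2) makes $\phi_X$ nonzero, and then the second clause of (2) forces $\phi_X$ to be an isomorphism.

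With $\cF(DX)\cong\cF(X)^*$ in hand, transport the $\cC_A$-coevaluation $i_{\cF(X)}:A\to\cF(X)\boxtimes_A\cF(X)^*\cong\cF(X\boxtimes DX)$ across the induction--restriction adjunction $\Hom_{\cC_A}(\cF(\vac),\cF(X\boxtimes DX))\cong\Hom_\cC(\vac,A\boxtimes(X\boxtimes DX))$ to obtain $\tilde i_X:\vac\to A\boxtimes(X\boxtimes DX)$ in $\cC$. The theorem then reduces to showing that $\tilde i_X$ factors through the monomorphism $\iota_A\boxtimes\id:X\boxtimes DX\hookrightarrow A\boxtimes(X\boxtimes DX)$; the resulting lift is the desired $i_X$.

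This factorization is the heart of the matter and where hypothesis (3) is decisive. I plan to argue by contradiction: assume $\tilde i_X$ does not factor through $\iota_A\boxtimes\id$, and extract from its image a subobject $s:S\hookrightarrow A$ with $S\not\subseteq\Im\,\iota_A$ tracking the obstruction. Hypothesis (3) then furnishes $Z\in\cC$ with $c_{Z,S}\neq c_{S,Z}^{-1}$ and $s\boxtimes\id_Z$ injective. On the other hand, $i_{\cF(X)}$ is an $A$-module morphism by construction, and since every simple constituent of $\cF(X)$ lies in $\cC_A^\loc$ by hypothesis (1)---where the assumption $\theta_A^2=\id_A$ intervenes to let $A$ itself be handled symmetrically under the ribbon structure---a double-braiding computation should force the $S$-component of $\tilde i_X$ to have trivial monodromy with any $Z\in\cC$. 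Pairing against the witness $Z$ above on the injective piece $S\boxtimes Z$ then contradicts $c_{Z,S}\neq c_{S,Z}^{-1}$, so $S\subseteq\Im\,\iota_A$ after all. Once $i_X$ has been produced, the zig-zag identities for $(e_X,i_X)$ can be checked by applying $\cF$: they become the corresponding identities for $(\cF(e_X),i_{\cF(X)})$, which hold in $\cC_A$ by construction, and the injectivity of $\iota_A$ pulls them back to $\cC$.

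To finish, the standard closure argument in a locally finite abelian r-category promotes duality for simples to duality for all objects, since $D$ is an anti-equivalence and every object of $\cC$ is a finite iterated extension of simples. The main obstacle is plainly the factorization step in the third paragraph; applying hypothesis (3) is delicate because it must rule out every non-unit piece of $A$ from appearing in a coevaluation that is naturally defined only at the $A$-module level, and this is exactly the conceptual bridge between rigidity of $\cC_A$ and rigidity of $\cC$.
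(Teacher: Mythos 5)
This theorem is quoted from \cite{CMSY} and the paper does not reprove it; its ``proof'' here is the sketch in the surrounding text: condition (1) makes $\cC_A$ and hence its Drinfeld center $\cZ(\cC_A)$ rigid, conditions (2) and (3) make the braided lift $\cC\rightarrow\cZ(\cC_A)$ of $\cF$ fully faithful and compatible with duality, and rigidity is then transferred back along this embedding (\cite[Lemma 3.15]{CMSY}). Your proposal follows the same skeleton for its first two steps (which are fine: $\phi_X\colon\cF(DX)\rightarrow\cF(X)^*$ is nonzero because $\cF$ is right exact and $e_X$ is surjective, and (2) then makes it an isomorphism), but you replace the full-faithfulness argument by a direct attempt to descend the coevaluation $i_{\cF(X)}$ to a morphism $\vac\rightarrow X\boxtimes DX$, and that is exactly where your argument has a genuine gap rather than a genuinely different working route.

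Concretely, two things in your third paragraph are asserted but not established. First, the ``obstruction subobject'' $s\colon S\hookrightarrow A$ is never constructed: $\tilde i_X$ is a morphism $\vac\rightarrow A\boxtimes(X\boxtimes DX)$, and because $\boxtimes$ is only right exact there is no well-defined minimal $S\subseteq A$ through whose image $S\boxtimes(X\boxtimes DX)\rightarrow A\boxtimes(X\boxtimes DX)$ the map $\tilde i_X$ factors (images of $S\boxtimes W$ in $A\boxtimes W$ do not form a lattice mirroring subobjects of $A$; this failure is precisely why condition (3) has to demand injectivity of $s\boxtimes\id_Z$). Second, the claimed ``trivial monodromy'' of the obstruction is unsupported: the observation that $i_{\cF(X)}$ is an $A$-module morphism is vacuous (every morphism in $\cC_A$ is) and carries no braiding information. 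The actual source of the needed compatibility is that $i_{\cF(X)}$ is a morphism in $\cZ(\cC_A)$, i.e.\ it automatically commutes with the half-braidings on induced objects---and for that one must also check that your isomorphism $\phi_X\colon\cF(DX)\isom\cF(X)^*$ intertwines the half-braiding on $\cF(DX)$ with the one dual to that of $\cF(X)$, a verification your proposal omits. Without this, the passage from ``$c_{Z,S}\neq c_{S,Z}^{-1}$'' to a contradiction with properties of $\tilde i_X$ has no starting point. (A smaller but real issue: even granting the factorization through the image of $\iota_A\boxtimes\id$, you obtain a map into a quotient of $X\boxtimes DX$, not into $X\boxtimes DX$ itself, unless you separately prove $u_{X\boxtimes DX}$ injective.) The remaining steps---checking the zig-zag identities by applying $\cF$ and using that $\End_\cC(DX)=\CC$ for simple $X$, then extending from simples to all objects---are standard and fine.
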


In view of Theorem \ref{thm:intro-CAloc-to-CA}, condition (1) guarantees that $\cC_A$ is rigid, and therefore so is its Drinfeld center $\cZ(\cC_A)$. Conditions (2) and (3) are then sufficient to prove that the natural lift of $\cF$ to a braided monoidal functor $\cC\rightarrow\cZ(\cC_A)$ is fully faithful and commutes with the duality structures on $\cC$ and $\cZ(\cC_A)$. Then $\cC$ is rigid because it embeds into a rigid category in a way compatible with duality.

Of course now we need a way to verify the assumptions of Theorem \ref{thm:intro-C-rigid}, but we suggest that this verification is substantially simpler than previous rigidity arguments such as the study of correlation functions. 
As evidence for this claim, we prove Theorem \ref{thm:intro-main-thm} here
by checking the conditions of Theorem \ref{thm:intro-C-rigid} for $\cC=\cC^{\mathrm{wt}}_k(\sl_2)$, and we do this purely algebraically using fairly minimal knowledge of the fusion rules in $\cC^{\mathrm{wt}}_k(\sl_2)$.
In our proof, the commutative algebra $A$ is the previously mentioned tensor product of a rational Virasoro vertex operator algebra with a half-lattice conformal vertex algebra.

The proof of Theorem \ref{thm:intro-main-thm} goes in a few steps, and we believe that the strategy is quite general and will serve as a useful guide for future rigidity problems:

\begin{itemize}

\item {\bf Step 1:} \textit{Show that the induced module $\cF(A)$ in $\cC_A$ has a composition series with all composition factors in $\cC_A^{\loc}$.} For $\cC=\cC^{\mathrm{wt}}_k(\sl_2)$, this step requires one specific fusion rule that we compute in Section \ref{sec:fusion}.

\item {\bf Step 2:} \textit{Show that every simple object in $\cC_A$ is local.} Checking this step uses a relation between tensor products in $\cC$ and $\cC_A$ that involves $\cF(A)$ (see Lemma \ref{lemma:CLR}), as well as Frobenius reciprocity for the induction functor $\cF$. 

\item {\bf Step 3:} \textit{Show that for any simple object $X$ in $\cC$, any non-zero morphism from $\cF(X')$ to $\cF(X)^*$ in $\cC_A$ is an isomorphism.} For this step, we compute the induction of every simple object in $\cC^{\mathrm{wt}}_k(\sl_2)$, and the main tools are the same as in the previous step.

\item {\bf Step 4:} \textit{Verify the non-degeneracy condition (3) of Theorem \ref{rigidity-of-C-from-CAloc}.} For this step, we derive general statements in Theorem \ref{thm:centralizer} and Corollary \ref{cor:condition3} that hold for $\cC^{\mathrm{wt}}_k(\sl_2)$.
\end{itemize}
 All four steps are straightforward when the denominator $v$ of the shifted level in Theorem \ref{thm:intro-main-thm} is $2$, that is, $k = -2 + \frac{u}{2}$ for $u$ odd. So in Section \ref{subsec:v=2}, we quickly illustrate the strategy in these cases, before moving on to the technically more involved $v\geq 3$ case in Section \ref{subsec:v>2}. 

Finally, recall that as discussed above, the braided tensor category structure on $\cC^{\mathrm{wt}}_k(\sl_2)$ was obtained in \cite{C1} from that on the category of weight modules for the Kazama-Suzuki dual \cite{KS} of $L_k(\sl_2)$, namely the $N=2$ super Virasoro algebra. The $N=2$ super Virasoro vertex operator superalgebra at central charge $-6\ell-3$ is also the principle $W$-superalgebra $\cW_{\ell}(\sl_{2|1})$ of $\sl_{2|1}$ at level $\ell$, so once Theorem \ref{thm:intro-main-thm} is proved, we can use the Kazama-Suzuki duality to prove rigidity for the tensor category of weight $\cW_\ell(\sl_{2|1})$-modules at suitable levels.
In particular, assume that $k$ is non-integral admissible for $\mathfrak{sl}_2$ and define $\ell$ by the relation $(\ell+1)(k+2)=1$. Then $\mathcal W_\ell(\mathfrak{sl}_{2|1}) \otimes V_{\sqrt{-1}\mathbb Z}$ is a simple current extension of $L_k(\mathfrak{sl}_2) \otimes \pi$, where $V_{\sqrt{-1}\ZZ}$ is a lattice conformal vertex superalgebra and $\pi$ is a non-degenerate rank $1$ Heisenberg vertex operator algebra \cite{CGN}.  
The theory of simple current extensions, whose relevant part we discuss in Section \ref{sec:sc}, then allows us to transfer rigidity from weight modules for $L_k(\mathfrak{sl}_2)$ to weight modules for $\mathcal W_\ell(\mathfrak{sl}_{2|1})$ (see Theorem \ref{thm:N=2} below):
\begin{cor}
    If $k$ is a non-integral admissible level for $\mathfrak{sl}_2$ and $\ell$ is defined by $(\ell+1)(k+2)=1$, then the braided tensor category of finitely-generated weight modules for $\mathcal W_\ell(\mathfrak{sl}_{2|1})$ is rigid.
\end{cor}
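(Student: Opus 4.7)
The strategy is to transfer rigidity from $\cC^{\mathrm{wt}}_k(\sl_2)$ to the category of finitely-generated weight $\mathcal W_\ell(\sl_{2|1})$-modules by traversing the simple current extension
\[ V := L_k(\sl_2)\otimes\pi \;\hookrightarrow\; A := \mathcal W_\ell(\sl_{2|1})\otimes V_{\sqrt{-1}\ZZ} \]
from \cite{CGN} in the direction from the subalgebra to the extension, and then extracting the $V_{\sqrt{-1}\ZZ}$ tensor factor at the end.

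First I would assemble a rigid ambient braided tensor supercategory of weight modules for $V$. By Theorem \ref{thm:intro-main-thm}, $\cC^{\mathrm{wt}}_k(\sl_2)$ is rigid. The weight $\pi$-modules on which the Heisenberg generator acts semisimply form a pointed rigid braided tensor category $\cH$, since the simple objects are the Fock modules $\pi_\lambda$ ($\lambda\in\CC$), all invertible with $\pi_\lambda\boxtimes\pi_\mu\cong\pi_{\lambda+\mu}$. Their Deligne product $\cV := \cC^{\mathrm{wt}}_k(\sl_2)\boxtimes\cH$ is a rigid braided tensor supercategory of finitely-generated weight $V$-modules.

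Next, by \cite{CGN}, $A$ is a simple current extension of $V$, so $A$ is a commutative (super)algebra in $\cV$ whose non-unit summands are invertible simple currents. By the simple current extension theory developed in Section \ref{sec:sc}, the category $\cV_A^\loc$ of local $A$-modules is rigid, and by \cite{CKM-ext} it is equivalent, as a braided tensor supercategory, to the corresponding category of weight $A$-modules. Finally I would extract the lattice factor. Since $V_{\sqrt{-1}\ZZ}$ is a rational lattice vertex operator superalgebra, its module category $\cD$ is a rigid modular tensor supercategory in which $V_{\sqrt{-1}\ZZ}$ is self-dual. Because weight $A$-modules on which the Heisenberg subalgebra inside $V_{\sqrt{-1}\ZZ}$ acts semisimply decompose as external tensor products, we obtain an equivalence
\[ \cV_A^\loc \;\simeq\; \cC^{\mathrm{wt}}(\mathcal W_\ell(\sl_{2|1})) \boxtimes \cD \]
of braided tensor supercategories. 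For any weight $\mathcal W_\ell(\sl_{2|1})$-module $M$, the object $M\boxtimes V_{\sqrt{-1}\ZZ}$ admits a dual in $\cV_A^\loc$; since duals in a Deligne product factor coordinate-wise and $V_{\sqrt{-1}\ZZ}$ is self-dual, the first factor of this dual must be a dual of $M$, giving the desired rigidity.

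The main obstacle I expect is the last step: rigorously identifying the weight module category of $A$ with the Deligne product $\cC^{\mathrm{wt}}(\mathcal W_\ell(\sl_{2|1})) \boxtimes \cD$ as braided tensor supercategories (and not merely at the level of abelian categories or objects), together with consistent tracking of super and half-integer conformal gradings through the simple current extension machinery of Section \ref{sec:sc}. A secondary subtlety is pinning down the correct Heisenberg weight-module category $\cH$ so that it is compatible both with the rigidity proof of Theorem \ref{thm:intro-main-thm} and with the induction functor to $A$.
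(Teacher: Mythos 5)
Your first two steps are essentially the paper's: form the Deligne product $\cC^{\mathrm{wt}}_k(\sl_2)\boxtimes\cC^\pi$, observe via \cite{CGN} that $A=\cW_\ell(\sl_{2|1})\otimes V_{\sqrt{-1}\ZZ}$ is a simple current extension of $L_k(\sl_2)\otimes\pi$, and deduce rigidity of the resulting category $\cE$ of weight $A$-modules from Theorem \ref{thm:properties-of-E} (every object of $\cE$ is induced, induction is a tensor functor, and inductions of rigid objects are rigid). Up to notation this is exactly how the paper proceeds.

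The divergence --- and the gap --- is in your final step. You propose to identify $\cE$ with a Deligne product $\cC^{\mathrm{wt}}(\cW_\ell(\sl_{2|1}))\boxtimes\cD$ of braided tensor supercategories and then argue that duals factor coordinate-wise. As you yourself note, establishing that equivalence at the level of braided tensor (super)categories is the hard part, and your proposal does not supply an argument for it; moreover, even granting the factorization, ``the first factor of the dual of $M\boxtimes V_{\sqrt{-1}\ZZ}$ is a dual of $M$'' is not automatic when rigidity of the first factor category is precisely what is being proved --- making it precise comes down to checking that a fully faithful tensor functor compatible with duality reflects rigidity. The paper avoids the Deligne factorization entirely: it only uses that $M\mapsto M\otimes V_{\sqrt{-1}\ZZ}$ is a fully faithful tensor embedding of the category of finitely-generated weight $\cW_\ell(\sl_{2|1})$-modules into the rigid category $\cE$ which commutes with contragredient duals, whence rigidity descends by \cite[Lemma 3.15]{CMSY}. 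Replacing your last step with this observation closes the gap and is strictly less work than what you set out to prove.
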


\medskip

\noindent\textbf{Acknowledgments.}
We thank Shashank Kanade, Florencia Orosz Hunziker, and David Ridout for discussions on an earlier approach to rigidity of weight modules for $L_k(\mathfrak{sl}_2)$ at admissible levels.
RM is partially supported by a research fellowship from the Alexander von Humboldt Foundation and also thanks the Universit\"{a}t Hamburg for hospitality during the visit in which this work was finished. JY is partially supported by NSFC grant 12371030 and a startup grant from Shanghai Jiao Tong University. 
While this work was progressing, we became aware of other work \cite{Flor} also proving rigidity of weight modules for $L_k(\mathfrak{sl}_2)$ at admissible levels. Since the rigidity proof in \cite{Flor} uses completely different methods from those used here, it was decided to complete both manuscripts independently and submit them to arXiv simultaneously.

\section{The category of weight \texorpdfstring{$L_k(\mathfrak{sl}_2)$}{Lk(sl(2))}-modules}

In this section we recall basic properties and results on the category of weight modules for the affine vertex operator algebra of $\mathfrak{sl}_2$ at admissible levels.

\subsection{Weight modules for \texorpdfstring{$\mathfrak{sl}_2$}{sl(2)}}

As usual, let $\mathfrak{sl}_2$ be the simple Lie algebra spanned by $e,f,h$, with brackets
\[
[h, e] = 2e,\;\;\; [h,f]=-2f,\;\;\; [e,f] = h.
\]
Let $\langle\cdot,\cdot\rangle$ be the non-degenerate symmetric invariant bilinear form on $\mathfrak{sl}_2$ normalized so that $\langle h,h\rangle =2$. The Casimir element of $\mathfrak{sl}_2$ with respect to $\langle\cdot,\cdot\rangle$ is 
\[
C = \frac{1}{2}h^2 + ef + fe.
\]
A \textit{weight module} for $\mathfrak{sl}_2$ is a module on which $h$ acts semisimply, with finite-dimensional eigenspaces. The $h$-eigenvalues on a weight $\mathfrak{sl}_2$-module are called \textit{weights}. There are four classes of irreducible weight $\mathfrak{sl}_2$-modules:
\begin{itemize}
\item[(1)] Finite-dimensional highest-weight modules $L_r$, with highest weight $r-1$, $r \in \ZZ_{\geq 1}$.
\item[(2)] Infinite-dimensional highest-weight modules $D_\lambda^+$, with highest weight $\lambda \notin \ZZ_{\geq 0}$.
\item[(3)] Infinite-dimensional lowest-weight modules $D_\lambda^{-}$, with lowest weight $\lambda\notin \ZZ_{\leq 0}$.
\item[(4)] Relaxed highest-weight modules $E_{\lambda, \underline{C}}$, with weights $\lambda + 2n$ for some $\lambda \in \CC$ and all $n \in \ZZ$, and Casimir eigenvalue $\underline{C}$. 
\end{itemize}

We briefly recall the construction of $E_{\lambda, \underline{C}}$. First, for $\lambda, \underline{C} \in \CC$, there is an $\mathfrak{sl}_2$-module
\[
E_{\lambda, \underline{C}}^- = \bigoplus_{n \in \ZZ}\CC v_{\lambda+2n}
\]
with action given by
\begin{align*}
& h.v_{\lambda +2i} = (\lambda + 2i)v_{\lambda + 2i}, \qquad e.v_{\lambda + 2i} = v_{\lambda + 2i + 2},\qquad C.v_{\lambda + 2i} = \underline{C} v_{\lambda +2i},\\
& f.v_{\lambda + 2i} = \frac{1}{2}\bigg(\underline{C} - \frac{1}{2}(\lambda + 2i-2)^2 - (\lambda + 2i-2)\bigg)v_{\lambda + 2i -2}.
\end{align*}
It is easy to show that $E_{\lambda, \underline{C}}^-$ is reducible if and only if $\underline{C} = C_\mu := \frac{1}{2}\mu^2 + \mu$ for some $\mu \in \lambda + 2\ZZ$, in which case there is an exact sequence
\begin{equation}\label{eqn:sl2-E-}
0 \longrightarrow D_{\mu+2}^- \longrightarrow E_{\lambda, C_\mu}^- \longrightarrow D_{\mu}^+ \longrightarrow 0.
\end{equation}
Similarly, there is an $\mathfrak{sl}_2$-module
\begin{equation*}
    E_{\lambda,\underline{C}}^+ =\bigoplus_{n\in\ZZ} \CC u_{\lambda+2n}
\end{equation*}
with action given by
\begin{align*}
& h.u_{\lambda +2i} = (\lambda + 2i)u_{\lambda + 2i}, \qquad f.u_{\lambda + 2i} = u_{\lambda + 2i - 2},\qquad C.u_{\lambda + 2i} = \underline{C} u_{\lambda +2i},\\
& e.u_{\lambda + 2i} = \frac{1}{2}\bigg(\underline{C} - \frac{1}{2}(\lambda + 2i+2)^2 + (\lambda + 2i+2)\bigg)u_{\lambda + 2i +2}.
\end{align*}
The module $E_{\lambda,\underline{C}}^+$ is reducible if and only if $C=C_{-\mu}$ for some $\mu\in\lambda+2\ZZ$, in which case there is a short exact sequence
\begin{equation}\label{eqn:sl2-E+}
0 \longrightarrow D_{\mu-2}^+ \longrightarrow E_{\lambda, C_{-\mu}}^+ \longrightarrow D_{\mu}^- \longrightarrow 0.
\end{equation}
If $\underline{C} \neq \frac{1}{2}\mu^2 \pm \mu$ for all $\mu \in \lambda +2\ZZ$, then
$E_{\lambda, \underline{C}}^-  \cong E_{\lambda, \underline{C}}^+$, and we denote this common module by $E_{\lambda, \underline{C}}$.

\subsection{Weight modules for affine \texorpdfstring{$\sl_2$}{sl(2)} at admissible levels}\label{subsec:affine-sl2-wt-mods}

Let $\widehat{\mathfrak{sl}}_2=\mathfrak{sl}_2[t,t^{-1}]\oplus\CC\mathbf{k}$ be the affine Lie algebra of $\mathfrak{sl}_2$, spanned by elements $a_n$ for $a\in\mathfrak{sl}_2$, $n\in\ZZ$, and a central element $\mathbf{k}$, with brackets
\begin{equation*}
    [a_m,b_n] =[a,b]_{m+n}+m\langle a,b\rangle\delta_{m+n,0}\mathbf{k}
\end{equation*}
for $a,b\in\mathfrak{sl}_2$ and $m,n\in\ZZ$. An $\widehat{\mathfrak{sl}}_2$-module has \textit{level} $k\in\CC$ if the central element $\mathbf{k}$ acts by the scalar $k$. For an $\mathfrak{sl}_2$-module $M$, the generalized Verma module of level $k$ associated to $M$ is the induced module
\[
\widehat{M}_k = U(\widehat{\mathfrak{sl}}_2) \otimes_{U(\mathfrak{sl}_2\otimes \CC[t]\oplus \CC{\bf k})}
M,
\]
where $\mathfrak{sl}_2\otimes t\CC[t]$ acts trivially on $M$ and $\mathbf{k}$ acts by $k$. If $M$ is simple and $k$ is not the critical level $-2$, then $\widehat{M}_k$ has a unique simple quotient. Let $L_k(\mathfrak{sl}_2)$ be the simple quotient of the generalized Verma module induced from the trivial $\mathfrak{sl}_2$-module $L_0$; it has the structure of a vertex operator algebra \cite{FZ}.

A weak $L_k(\mathfrak{sl}_2)$-module is a module for $L_k(\mathfrak{sl}_2)$ considered as a vertex algebra, with no grading assumptions. A \textit{generalized $L_k(\mathfrak{sl}_2)$-module} is a weak module with a $\CC$-grading given by generalized eigenspaces of the Virasoro $L_0$ operator; these generalized eigenspaces are called \textit{conformal weight spaces}, and the generalized $L_0$-eigenvalues are called \textit{conformal weights}.
\begin{defn}\label{def:wt-mod}
    A \textit{weight $L_k(\mathfrak{sl}_2)$-module} is a generalized $L_k(\mathfrak{sl}_2)$-module $W$ with a $\CC\times\CC$-grading $W=\bigoplus_{\Delta,\lambda\in\CC} W_{[\Delta]}^\lambda$ satisfying the following properties:
    \begin{enumerate}
        \item[(a)] $W_{[\Delta]}:=\bigoplus_{\lambda\in\CC} W_{[\Delta]}^\lambda$ is the conformal weight space of $W$ with conformal weight $\Delta$.
        \item[(b)] $W^\lambda:=\bigoplus_{\Delta\in\CC} W_{[\Delta]}^\lambda$ is the $h_0$-eigenspace of $W$ with eigenvalue $\lambda$.
        \item[(c)] For any $\Delta,\lambda\in\CC$, $\dim W_{[\Delta]}^\lambda <\infty$.
        \item[(d)] For any fixed $\lambda\in\CC$, $W^\lambda_{[\Delta]}=0$ for $\mathrm{Re}(\Delta)$ sufficiently negative.
    \end{enumerate}
    Let $\cC^{\mathrm{wt}}_k(\mathfrak{sl}_2)$ denote the category of finitely-generated weight $L_k(\mathfrak{sl}_2)$-modules. A generalized $L_k(\mathfrak{sl}_2)$-module is \textit{lower bounded} if $W_{[\Delta]}=0$ for $\mathrm{Re}(\Delta)$ sufficiently negative.
\end{defn}

\begin{remark}
    Weight $L_k(\mathfrak{sl}_2)$-modules are strongly $\CC$-graded generalized $L_k(\mathfrak{sl}_2)$-modules in the sense of \cite[Definition 2.25]{HLZ1}, where the $\CC$-grading here is the $h_0$-eigenvalue grading.
\end{remark}

We now fix $k$ to be an admissible level for $\mathfrak{sl}_2$, that is, $k=-2+t$ where $t=\frac{u}{v}\in\QQ_{>0}$ for coprime $u\in\ZZ_{\geq 2}$, $v\in\ZZ_{\geq 1}$. It is shown in \cite{ACK} that if $k$ is admissible, then the category $\cC_k^{\mathrm{wt}}(\mathfrak{sl}_2)$ of finitely-generated weight $L_k(\mathfrak{sl}_2)$-modules is a locally finite abelian category with enough projective objects. We first recall the lower-bounded simple modules in $\cC_k^{\mathrm{wt}}(\mathfrak{sl}_2)$, which were first classified in \cite[Theorem 4.2]{AM}; see also \cite{RW, KR}. To describe them, define
\begin{equation}
\lambda_{r,s} = r-1-ts,\qquad\Delta_{r,s}=\frac{(r-ts)^2-1}{4t}
\end{equation}
for integers $1\leq r\leq u-1$ and $0\leq s\leq v-1$. These have the symmetries
\begin{equation}\label{eqn:lambda-Delta-symmetries}
    \lambda_{u-r,v-s} = -\lambda_{r,s}-2,\qquad\Delta_{u-r,v-s}=\Delta_{r,s}
\end{equation}
for $1\leq r\leq u-1$ and $1\leq s\leq v-1$.

For $r\in\ZZ_{\geq 1}$, let $\cL_{r,0}$ denote the simple quotient of the generalized Verma module of level $k$ induced from the simple $r$-dimensional $\mathfrak{sl}_2$-module $L_r$. It is a module for $L_k(\mathfrak{sl}_2)$ if and only if $1\leq r\leq u-1$. Then for $\lambda\in\CC$, let $\cD_\lambda^+$ denote the simple quotient of the generalized Verma module of level $k$ induced from the simple highest-weight $\mathfrak{sl}_2$-module $D_\lambda^+$. 
It is a module for $L_k(\mathfrak{sl}_2)$ if and only if $\lambda=\lambda_{r,s}$ for some integers $1\leq r\leq u-1$ and $0\leq s\leq v-1$. In this case, we set $\cD_{r,s}^+:=\cD_{\lambda_{r,s}}^+$, so that in particular $\cD_{r,0}^+=\cL_{r,0}$. The module $\cD_{r,s}^+$ is the irreducible highest-weight $\widehat{\mathfrak{sl}}_2$-module whose highest-weight vector has $h_0$-weight $\lambda_{r,s}$ and conformal weight $\Delta_{r,s}$. 
 Let $\cD_{r,s}^-$ denote the contragredient $L_k(\mathfrak{sl}_2)$-module of $\cD_{r,s}^+$ in the sense of Definition 2.32 and Theorem 2.34 of \cite{HLZ1}. This is the irreducible lowest-weight $\widehat{\mathfrak{sl}}_2$-module whose lowest-weight vector has $h_0$-weight $-\lambda_{r,s}$ and conformal weight $\Delta_{r,s}$. Note that $\cD_{r,0}^- = \cL_{r,0}$.

Now for $\lambda+2\ZZ \in \CC/2\ZZ$ and $\Delta\in\CC$, let $\cE^\pm_{\lambda,\Delta}$ denote the quotient of the generalized Verma module induced from $E^\pm_{\lambda,2t\Delta}$ by the maximal submodule which intersects the lowest conformal weight subspace $E^\pm_{\lambda,2t\Delta}$ trivially. It is a module for $L_k(\mathfrak{sl}_2)$ if and only if $\Delta=\Delta_{r,s}$ for some integers $1\leq r\leq u-1$ and $1\leq s\leq v-1$. Note that
 $2t\Delta_{r,s}=\frac{1}{2}\mu^2\pm\mu$ for some $\mu\in\lambda+2\ZZ$ if and only if  $\lambda\in\pm\lambda_{r,s}+2\ZZ$. Thus if $\lambda \notin \pm\lambda_{r,s} + 2\ZZ$, then $\cE^+_{\lambda,\Delta_{r,s}}$ and $\cE^-_{\lambda,\Delta_{r,s}}$ are isomorphic simple $L_k(\mathfrak{sl}_2)$-modules, and we denote this common module by $\cE_{\lambda,\Delta_{r,s}}$.
If $\lambda\in\pm\lambda_{r,s}+2\ZZ$, then we set $\cE_{r,s}^+=\cE^+_{\lambda_{r,s},\Delta_{r,s}}$ and $\cE^-_{r,s}=\cE^-_{-\lambda_{r,s},\Delta_{r,s}}$. Note that $\cE^\pm_{\mp\lambda_{r,s},\Delta_{r,s}} =\cE^\pm_{u-r,v-s}$ by \eqref{eqn:lambda-Delta-symmetries}. The $L_k(\sl_2)$-modules $\cE_{r,s}^\pm$ are indecomposable, and using \eqref{eqn:sl2-E-}, \eqref{eqn:sl2-E+}, and \eqref{eqn:lambda-Delta-symmetries}, there are non-split short exact sequences
\begin{equation}\label{eqn:exact-seq-E+E-}
0 \longrightarrow \cD_{r,s}^+ \longrightarrow \cE_{r,s}^+ \longrightarrow \cD^-_{u-r,v-s} \longrightarrow 0, \qquad\quad 0 \longrightarrow \cD_{r,s}^- \longrightarrow \cE_{r,s}^- \longrightarrow \cD^+_{u-r,v-s} \longrightarrow 0.
\end{equation}
for $1\leq r\leq u-1$ and $1\leq s\leq v-1$.

By \cite[Theorem 4.4]{ACK}, the above modules comprise the complete list of indecomposable lower-bounded weight $L_k(\mathfrak{sl}_2)$-modules. To summarize, these are the following modules:
\begin{enumerate}
    \item The irreducible highest-weight modules $\cL_{r,0}$ and $\cD_{r,s}^+$ for $1\leq r\leq u-1$ and $1\leq s\leq v-1$.
    \item The irreducible lowest-weight modules $\cD_{r,s}^-=(\cD_{r,s}^+)'$ for $1\leq r\leq u-1$ and $1\leq s\leq v-1$.
    \item The irreducible relaxed highest-weight modules $\cE_{\lambda,\Delta_{r,s}}$ for $\lambda+2\ZZ\in\CC/2\ZZ$, $1\leq r\leq u-1$, $1\leq s\leq v-1$, and $\lambda\notin\pm\lambda_{r,s}+ 2\ZZ$.
    \item The reducible relaxed highest-weight modules $\cE_{r,s}^\pm =\cE_{\pm\lambda_{r,s}, \Delta_{r,s}}^{\pm}$ for $1\leq r\leq u-1$, $1\leq s\leq v-1$.
\end{enumerate}

For the remaining simple objects of $\cC_k^{\mathrm{wt}}(\mathfrak{sl}_2)$, we recall the spectral flow twist of an $\widehat{\mathfrak{sl}}_2$ module. For $\ell\in\ZZ$, the spectral flow automorphism $\sigma^\ell$ of $\widehat{\mathfrak{sl}}_2$ fixes $\mathbf{k}$ and acts on the remaining basis elements of $\widehat{\mathfrak{sl}}_2$ by
\[
\sigma^\ell(e_n) = e_{n-\ell},\qquad \sigma^\ell(f_n) = f_{n+\ell},\qquad \sigma^\ell(h_n) = h_n + \delta_{n,0}\ell {\bf k} 
\]
for $n \in \ZZ$. Then the spectral flow twist $\sigma^\ell(M)$ of an $\widehat{\mathfrak{sl}}_2$-module $M$ has the same underlying vector space as $M$ with the action of $x\in\widehat{\mathfrak{sl}}_2$ given by
\[
x.m = \sigma^{-\ell}(x).m
\]
for $m\in\sigma^\ell(M)$. If $M$ is additionally an $L_k(\mathfrak{sl}_2)$-module, then so is $\sigma^\ell(M)$, with vertex operator 
\[
Y_{\sigma^\ell(M)}(-,x): L_k(\mathfrak{sl}_2)\otimes\sigma^\ell(M)\rightarrow\sigma^\ell(M)((x))
\]
given by Li's $\Delta$-operators \cite{Li}. Specifically, $Y_{\sigma^\ell(M)}(v,x) =Y_M(\Delta(\frac{\ell}{2} h,x)v,x)$, where
\begin{equation*}
    \Delta\left(\frac{\ell}{2}h,x\right) =x^{\ell h_0/2}\exp\left(\sum_{n=1}^\infty \frac{(-1)^{n+1}\ell}{2n} h_n\, x^{-n}\right).
\end{equation*}
Note that $\Delta(\frac{\ell_1}{2}h,x)\Delta(\frac{\ell_2}{2}h,x)=\Delta(\frac{\ell_1+\ell_2}{2}h,x)$ for any $\ell_1,\ell_2\in\ZZ$, and thus
\begin{equation}\label{eqn:comp-of-spec-flow}
    \sigma^{\ell_1}(\sigma^{\ell_2}(M)) = \sigma^{\ell_1+\ell_2}(M)
\end{equation}
for any $L_k(\mathfrak{sl}_2)$-module $M$.
By \cite[Corollary 3.4]{ACK} (see also Remark 3.2 and Theorem 3.11 in \cite{AKR}), every simple object of $\cC_k^{\mathrm{wt}}(\mathfrak{sl}_2)$ is a spectral flow twist of some lower-bounded simple weight $L_k(\mathfrak{sl}_2)$-module. Thus any simple object of $\cC_k^{\mathrm{wt}}(\mathfrak{sl}_2)$ is isomorphic to at least one of the following:
\begin{equation*}
\sigma^\ell(\cL_{r,0}),\qquad\sigma^\ell(\cD_{r,s}^+),\qquad\sigma^\ell(\cD_{r,s}^-),\qquad\sigma^\ell(\cE_{\lambda,\Delta_{r,s}})
\end{equation*} 
There is some repetition in this list of simple modules in $\cC_k^{\mathrm{wt}}(\mathfrak{sl}_2)$, since the simple highest-weight $L_k(\mathfrak{sl}_2)$-modules can be realized as spectral flows of lowest-weight modules (see \cite[Figure 2]{CR2}):
\begin{align}\label{eqn:HW-as-SF-of-LW}
\cD_{r,s}^+\cong\begin{cases}
    \sigma(\cD^-_{u-r,v-s-1}) & \text{if}\,\,\,1\leq s\leq v-2\\
    \sigma^2(\cD^-_{r,v-1}) & \text{if}\,\,\, s=v-1\\
\end{cases},\qquad \cL_{r,0}\cong\sigma(\cD_{u-r,v-1}^-)
\end{align}
for $1\leq r\leq u-1$. As a result, any simple object of $\cC^{\mathrm{wt}}_k(\sl_2)$ is isomorphic to exactly one of the following:
\begin{equation*}
    \sigma^\ell(\cD_{r,s}^+),\qquad\sigma^\ell(\cE_{\lambda,\Delta_{r,s}})
\end{equation*}
for integers $1\leq r\leq u-1$ and $1\leq s\leq v-1$, $\lambda\notin\pm\lambda_{r,s}+2\ZZ$, and $\ell\in\ZZ$.

It is also shown in \cite{ACK} that every simple object of $\cC^{\mathrm{wt}}_k(\sl_2)$ has a projective cover. The simple modules $\sigma^\ell(\cE_{\lambda,\Delta_{r,s}})$ are already projective in $\cC^{\mathrm{wt}}_k(\sl_2)$, while
logarithmic projective covers of the highest-weight modules were constructed \cite{A,ACK}. Let $\cP_{r,s}$ denote the projective cover of $\cD_{r,s}^+$ for $1\leq r\leq u-1$ and $1\leq s\leq v-1$, so that 
$\sigma^\ell(\cP_{r, s})$ is the projective cover (and also injective hull) of $\sigma^\ell(\cD^+_{r, s})$. Then there are non-split short exact sequences
\begin{equation*}
\begin{split}
         0 \longrightarrow \sigma^{\ell+1}(\cE^-_{u-r, v-s-1}) \longrightarrow 
        &\, \sigma^{\ell}(\cP_{r, s}) \longrightarrow  \sigma^{\ell}(\cE^-_{u-r, v-s}) \longrightarrow 0 \qquad (\text{if} \ 1\leq s\leq v-2) \\
         0 \longrightarrow \sigma^{\ell+2}(\cE^-_{r, v-1}) \longrightarrow 
        &\,\sigma^{\ell}(\cP_{r, v-1}) \longrightarrow  \sigma^{\ell}(\cE^-_{u-r, 1}) \longrightarrow 0 
    \end{split} 
\end{equation*}
for $1\leq r\leq u-1$, $1\leq s\leq v-1$, and $\ell\in\ZZ$.

It is shown in \cite{C1} that $\cC_k^{\mathrm{wt}}(\mathfrak{sl}_2)$ admits the braided tensor category structure with ribbon twist of \cite{HLZ1}-\cite{HLZ8} for categories of strongly $\CC$-graded generalized modules for a vertex operator algebra. The tensor product operation on $\cC_k^{\mathrm{wt}}(\mathfrak{sl}_2)$ is the conformal-field-theoretic fusion tensor product $\boxtimes$, which is mathematically defined in terms of intertwining operators of type $\binom{M_3}{M_1\,M_2}$, where $M_1$, $M_2$, $M_3$ are objects of $\cC_k^{\mathrm{wt}}(\mathfrak{sl}_2)$. These are linear maps
\begin{equation*}
    \cY: M_1\otimes M_2\longrightarrow M_3[\log x]\lbrace x\rbrace
\end{equation*}
satisfying several properties specified in \cite[Definition 3.10]{HLZ2}. We will in particular need two properties of intertwining operators in $\cC_k^{\mathrm{wt}}(\mathfrak{sl}_2)$: the associator formula
\begin{equation}\label{eqn:intw_op_it}
\cY(a_{n}m_1, x)m_2 = \sum_{i\geq 0}\binom{n}{i}(-x)^{i}a_{n-i}\cY(m_1,x)m_2-\sum_{i\geq 0} \binom{n}{i}(-x)^{n-i}\cY(m_1,x)a_i m_2
\end{equation}
and the commutator formula
\begin{equation}\label{eqn:intw_op_comm}
\cY(m_1,x)a_{n} m_2=a_{n}\cY(m_1,x)m_2-\sum_{i\geq 0}\binom{n}{i}x^{n-i}\cY(a_i m_1,x)m_2
\end{equation}
for $a\in\mathfrak{sl}_2$, $n\in\ZZ$, $m_1\in M_1$, and $m_2\in M_2$. We will also need the following lemma on intertwining operators from \cite{DL}:
\begin{lemma}[\cite{DL}, Proposition 11.9]\label{lem:nonzerofusion}
Let $V$ be a conformal vertex algebra, let $M_1, M_2,M_3$ be $V$-modules such that $M_1, M_2$ are simple, and let $\cY$ be a non-zero intertwining operator of type $\binom{M_3}{M_1\,M_2}$. Then $\cY(m_1,x)m_2\neq 0$ for all non-zero $m_1\in M_1$, $m_2\in M_2$.
\end{lemma}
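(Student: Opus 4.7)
The plan is to argue by contradiction: suppose there exist non-zero $m_1 \in M_1$ and $m_2 \in M_2$ with $\cY(m_1,x)m_2 = 0$, and deduce $\cY = 0$, contradicting the hypothesis.

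The first step is to introduce the ``two-sided annihilators''
\[
K_L := \{\, m_1' \in M_1 \mid \cY(m_1', x)w = 0 \text{ for all } w \in M_2 \,\}, \quad K_R := \{\, w \in M_2 \mid \cY(m_1', x)w = 0 \text{ for all } m_1' \in M_1 \,\},
\]
and to verify they are $V$-submodules of $M_1$ and $M_2$, respectively. For $K_L$: given $m_1' \in K_L$ and $w \in M_2$, the associator formula \eqref{eqn:intw_op_it} expresses $\cY(a_n m_1', x)w$ as a finite linear combination of terms $a_{n-i}\cY(m_1', x)w$ and $\cY(m_1', x)a_i w$, each of which vanishes because $m_1' \in K_L$ and both $w, a_i w \in M_2$; hence $a_n m_1' \in K_L$. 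Symmetrically, the commutator formula \eqref{eqn:intw_op_comm} shows $K_R$ is a $V$-submodule of $M_2$. Since $\cY \neq 0$, neither $K_L = M_1$ nor $K_R = M_2$ can hold, and simplicity of $M_1, M_2$ forces $K_L = 0$ and $K_R = 0$.

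The crux is to upgrade the hypothetical single-pair vanishing $\cY(m_1, x)m_2 = 0$ to the statement $m_1 \in K_L$; together with $K_L = 0$ this gives $m_1 = 0$, the desired contradiction. By simplicity of $M_1$ and $M_2$, every element of $M_1$ is of the form $Pm_1$ and every element of $M_2$ of the form $Qm_2$, for monomials $P, Q$ in the modes of $V$. The plan is to prove, by induction, that $\cY(Pm_1, x)Qm_2 = 0$ for all such $P, Q$, by cycling between \eqref{eqn:intw_op_it} applied to $Pm_1$ and \eqref{eqn:intw_op_comm} applied to $Qm_2$, so that modes are pushed from one factor to the other and eventually eliminated.

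The main obstacle is that a naive induction on $|P|$ or on $|Q|$ alone fails to close: each application of \eqref{eqn:intw_op_it} or \eqref{eqn:intw_op_comm} produces residual terms in which a mode has been transferred across the intertwining operator, leaving the total length $|P| + |Q|$ unchanged. I would handle this following the argument in \cite[Proposition 11.9]{DL}: one invokes the full Jacobi identity for intertwining operators applied to the hypothesized relation $\cY(m_1, x_2)m_2 = 0$, and performs a residue computation in the auxiliary variable that produces the vanishing of $\cY(m_1, x_2)Y_2(v, x_1)m_2$ as a single identity, rather than as separate commutator and associator pieces. This residue step is the technical heart of the proof, and once it is established the coupled induction closes and yields $m_1 \in K_L$, completing the contradiction.
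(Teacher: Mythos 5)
First, a framing point: the paper does not prove this lemma at all --- it is imported verbatim as \cite[Proposition 11.9]{DL} --- so there is no in-paper argument to compare against; what follows measures your proposal against the standard proof of the cited result. Your preliminary step is correct: $K_L$ and $K_R$ are $V$-submodules by \eqref{eqn:intw_op_it} and \eqref{eqn:intw_op_comm} respectively (the residual terms cause no trouble there precisely because vanishing is required against \emph{all} vectors on the other side), and simplicity then forces $K_L=K_R=0$. The genuine gap is that the entire content of the proposition lies in the one step you do not carry out: upgrading the single-pair vanishing $\cY(m_1,x)m_2=0$ to $m_1\in K_L$. You correctly diagnose why the naive mode-pushing induction stalls (the total length $|P|+|Q|$ is conserved), but you then resolve it by ``following the argument in \cite[Proposition 11.9]{DL}'' --- which is the statement being proved --- and describe the missing step only as ``a residue computation'' constituting ``the technical heart of the proof.'' As written this is circular, and nothing in the proposal would let a reader reconstruct that step.

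The concrete mechanism is weak commutativity for intertwining operators, and it is worth seeing exactly why it succeeds where \eqref{eqn:intw_op_comm} fails. Taking $\mathrm{Res}_{x_0}$ of $x_0^k$ times the Jacobi identity, with $k$ chosen so that $a_im_1=0$ for all $i\geq k$, annihilates the iterate side entirely and yields
\[
(x_1-x_2)^k\,Y_{M_3}(a,x_1)\cY(m_1,x_2)\;=\;(x_1-x_2)^k\,\cY(m_1,x_2)Y_{M_2}(a,x_1)
\]
with \emph{no} residual terms; the commutator formula \eqref{eqn:intw_op_comm} is what remains after cancelling $(x_1-x_2)^k$, and the residual terms are exactly the price of that cancellation. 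Applying both sides to $m_2$ kills the left-hand side, and since $\cY(m_1,x_2)Y_{M_2}(a,x_1)m_2$ is lower-truncated in $x_1$, multiplication by $(x_1-x_2)^k$ is injective on it (compare coefficients of the lowest power of $x_1$); hence $\cY(m_1,x_2)a_nm_2=0$ for all $a\in V$ and $n\in\ZZ$. Because $k$ depends only on $a$ and $m_1$ and not on the vector in $M_2$, this iterates over monomials in modes applied to $m_2$, and irreducibility of $M_2$ gives $m_1\in K_L$, whence $K_L=M_1$ and $\cY=0$, the desired contradiction. Note that on this route the radical $K_R$ is never needed; only $K_L$ enters.
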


If $M_1$ and $M_2$ are objects of $\cC_k^{\mathrm{wt}}(\mathfrak{sl}_2)$, then their fusion tensor product is an object $M_1\boxtimes M_2$ of $\cC_k^{\mathrm{wt}}(\mathfrak{sl}_2)$ equipped with an intertwining operator $\cY_{M_1,M_2}$ of type $\binom{M_1\boxtimes M_2}{M_1\,M_2}$ satisfying a universal property:
for any intertwining operator $\cY$ of type $\binom{M_3}{M_1\,M_2}$, where $M_3$ is an object of $\cC^{\mathrm{wt}}_k(\mathfrak{sl}_2)$, there is a unique $L_k(\mathfrak{sl}_2)$-module map $f: M_1\boxtimes M_2\rightarrow M_3$ such that $f\circ\cY_{M_1,M_2}=\cY$. An important property of fusion tensor products is their compatibility with spectral flow:
\begin{prop}\label{prop:tens-prod-and-spec-flow}
   For any objects $M_1$, $M_2$ of $\cC_k^{\mathrm{wt}}(\mathfrak{sl}_2)$ and $\ell_1,\ell_2\in\ZZ$, 
   \[
   \sigma^{\ell_1}(M_1)\boxtimes\sigma^{\ell_2}(M_2)\cong\sigma^{\ell_1+\ell_2}(M_1\boxtimes M_2).
   \]
\end{prop}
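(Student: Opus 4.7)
The strategy is to use the universal property of the fusion product to produce mutually inverse $L_k(\mathfrak{sl}_2)$-module maps between the two sides, with the construction driven by Li's $\Delta$-operator machinery that already underlies the definition of spectral flow. Starting from the canonical intertwining operator $\cY_{M_1,M_2}$ of type $\binom{M_1\boxtimes M_2}{M_1\, M_2}$, the plan is to build an intertwining operator $\widetilde{\cY}$ of type $\binom{\sigma^{\ell_1+\ell_2}(M_1\boxtimes M_2)}{\sigma^{\ell_1}(M_1)\,\sigma^{\ell_2}(M_2)}$. Since spectral flow leaves underlying vector spaces unchanged, this amounts to multiplying $\cY_{M_1,M_2}(m_1,x)m_2$ by an explicit product of $\Delta\!\left(\tfrac{*}{2}h,x\right)$-factors acting in the three slots, together with an overall monomial power of $x$ coming from the zero-mode shifts, chosen so that the axioms for $\widetilde{\cY}$ hold against the twisted $\sigma^{\ell_i}$-actions.

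The bulk of the verification consists of checking the intertwining operator axioms for $\widetilde{\cY}$. Lower truncation in $x$ and the $L_{-1}$-derivative property follow from the corresponding properties of $\cY_{M_1,M_2}$ and the explicit form of $\Delta(\tfrac{\ell}{2}h,x)$ combined with its effect on the Virasoro action of $\sigma^\ell(M)$. The commutator formula \eqref{eqn:intw_op_comm} and the associator formula \eqref{eqn:intw_op_it} for $\widetilde{\cY}$ reduce to those same formulas for $\cY_{M_1,M_2}$, once one reads off the explicit shifted actions $e_n\mapsto e_{n+\ell}$, $f_n\mapsto f_{n-\ell}$, $h_n\mapsto h_n-\delta_{n,0}\ell k$ coming from $\sigma^{-\ell}$, and applies the basic identity $\Delta(\tfrac{\ell_1}{2}h,x)\Delta(\tfrac{\ell_2}{2}h,x)=\Delta(\tfrac{\ell_1+\ell_2}{2}h,x)$ stated just before \eqref{eqn:comp-of-spec-flow}.

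Once $\widetilde{\cY}$ is known to be an intertwining operator, the universal property of $\sigma^{\ell_1}(M_1)\boxtimes\sigma^{\ell_2}(M_2)$ yields a unique $L_k(\mathfrak{sl}_2)$-module map $\Phi:\sigma^{\ell_1}(M_1)\boxtimes\sigma^{\ell_2}(M_2)\to\sigma^{\ell_1+\ell_2}(M_1\boxtimes M_2)$. Applying the same construction with $(\ell_1,\ell_2)$ replaced by $(-\ell_1,-\ell_2)$ to the canonical intertwining operator $\cY_{\sigma^{\ell_1}(M_1),\sigma^{\ell_2}(M_2)}$, and invoking \eqref{eqn:comp-of-spec-flow} to identify $\sigma^{-\ell_i}(\sigma^{\ell_i}(M_i))$ with $M_i$, produces by the universal property of $M_1\boxtimes M_2$ a morphism $\Psi:\sigma^{\ell_1+\ell_2}(M_1\boxtimes M_2)\to\sigma^{\ell_1}(M_1)\boxtimes\sigma^{\ell_2}(M_2)$. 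That $\Phi$ and $\Psi$ are mutual inverses is then forced by the uniqueness clauses in the two universal properties, since both $\Phi\circ\Psi$ and $\Psi\circ\Phi$ intertwine canonical intertwining operators with themselves and hence must equal the identity morphisms.

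The main obstacle is identifying the correct $\Delta$-twisted formula for $\widetilde{\cY}$ and carrying out the commutator/associator verifications cleanly; the computation is formal, but the matching between the shifted modes on $\sigma^\ell$-twisted modules and the exponentials of $h_n$-modes produced by the $\Delta$-operator requires careful bookkeeping. Once the right ansatz is written down, everything reduces to manipulations of formal power series in $x$ in the spirit of Li's original treatment of $\Delta$-operators.
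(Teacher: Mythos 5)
Your overall strategy --- twist the canonical intertwining operator by $\Delta$-operators and then use the universal property of $\boxtimes$ to produce mutually inverse module maps --- is the same one the paper uses, and your argument that $\Phi$ and $\Psi$ are mutual inverses (uniqueness in the universal property, which rests on surjectivity of the canonical intertwining operators) is sound. However, there is a genuine gap at the central step, and you have in fact flagged it yourself: you never write down the ``$\Delta$-twisted formula'' for an intertwining operator of type $\binom{\sigma^{\ell_1+\ell_2}(M_1\boxtimes M_2)}{\sigma^{\ell_1}(M_1)\,\sigma^{\ell_2}(M_2)}$, and producing one that twists \emph{both} inputs simultaneously is not a routine extension of Li's construction. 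Li's lemma (\cite[Lemma 2.1]{Li}, as invoked in the paper) says that inserting $\Delta(\frac{\ell}{2}h,x)$ in the \emph{first} slot of $\cY_{M_1,M_2}$ yields an intertwining operator in which the \emph{second} input and the output are twisted by $\sigma^{\ell}$, while the first input stays untwisted. So ``multiplying by $\Delta$-factors acting in the three slots together with a monomial power of $x$'' does not obviously give what you need; twisting the first input requires a different mechanism (e.g.\ passing through skew-symmetry of intertwining operators), and verifying the associator and commutator formulas for such an ansatz against the $\sigma^{-\ell_1}$-shifted action on the first input is precisely the bookkeeping you defer.

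The paper sidesteps this difficulty entirely: it first proves the case $\ell_1=0$, where Li's lemma applies verbatim to give $\widetilde{\cY}=\cY_{M_1,M_2}\circ(\Delta(\frac{\ell_2}{2}h,x)\otimes\id_{M_2})$ of type $\binom{\sigma^{\ell_2}(M_1\boxtimes M_2)}{M_1\,\,\sigma^{\ell_2}(M_2)}$ with no new axiom-checking, builds the inverse the same way using \eqref{eqn:comp-of-spec-flow}, and then deduces the general case from commutativity of the fusion product:
\begin{equation*}
\sigma^{\ell_1}(M_1)\boxtimes\sigma^{\ell_2}(M_2)\cong\sigma^{\ell_2}(M_2\boxtimes\sigma^{\ell_1}(M_1))\cong\sigma^{\ell_1+\ell_2}(M_2\boxtimes M_1)\cong\sigma^{\ell_1+\ell_2}(M_1\boxtimes M_2).
\end{equation*}
To repair your write-up with minimal effort, adopt this reduction: handle one spectral flow at a time via the already-established Li lemma rather than constructing and verifying a doubly-twisted intertwining operator from scratch.
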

\begin{proof}
    We first consider the case $\ell_1=0$. By \cite[Lemma 2.1]{Li}, $\widetilde{\cY}=\cY_{M_1,M_2}\circ(\Delta(\frac{\ell_2}{2}h,x)\otimes\id_{M_2})$ is an intertwining operator of type $\binom{\sigma^{\ell_2}(M_1\boxtimes M_2)}{M_1\,\,\sigma^{\ell_2}(M_2)}$, so there is a unique $L_k(\mathfrak{sl}_2)$-module map 
    \[
    f: M_1\boxtimes\sigma^{\ell_2}(M_2)\longrightarrow\sigma^{\ell_2}(M_1\boxtimes M_2).
    \]
    such that $f\circ\cY_{M_1,\sigma^{\ell_2}(M_2)}=\widetilde{\cY}$.
    Similarly, using \eqref{eqn:comp-of-spec-flow}, there is an $L_k(\mathfrak{sl}_2)$-module map
    \begin{equation*}
         \sigma^{\ell_2}(M_1\boxtimes M_2) =\sigma^{\ell_2}(M_1\boxtimes\sigma^{-\ell_2}(\sigma^{\ell_2}(M_2))) \xrightarrow{g} \sigma^{\ell_2}(\sigma^{-\ell_2}(M_1\boxtimes\sigma^{\ell_2}(M_2))) = M_1\boxtimes\sigma^{\ell_2}(M_2),
    \end{equation*}
    where $g\circ\cY_{M_1,M_2} =\cY_{M_1,\sigma^{\ell_2}(M_2)}\circ(\Delta(-\frac{\ell_2}{2}h,x)\otimes\id_{M_2})$. These definitions imply
    \begin{equation*}
        f\circ g\circ\cY_{M_1,M_2}=\cY_{M_1,M_2},\qquad g\circ f\circ\cY_{M_1,\sigma^{\ell_2}(M_2)} =\cY_{M_1,\sigma^{\ell_2}(M_2)},
    \end{equation*}
    and thus $f\circ g=\id_{\sigma^{\ell_2}(M_1\boxtimes M_2)}$ and $g\circ f=\id_{M_1\boxtimes\sigma^{\ell_2}(M_2)}$, because $\cY_{M_1,M_2}$ and $\cY_{M_1,\sigma^{\ell_2}(M_2)}$ are surjective intertwining operators (see \cite[Proposition 4.23]{HLZ3}). This proves the $\ell_1=0$ case of the proposition, and the general case follows because
    \begin{align*}
        \sigma^{\ell_1}(M_1)\boxtimes\sigma^{\ell_2}(M_2) & \cong\sigma^{\ell_2}(\sigma^{\ell_1}(M_1)\boxtimes M_2)\cong\sigma^{\ell_2}(M_2\boxtimes\sigma^{\ell_1}(M_1))\nonumber\\
        & \cong\sigma^{\ell_2}(\sigma^{\ell_1}(M_2\boxtimes M_1)) =\sigma^{\ell_1+\ell_2}(M_2\boxtimes M_1)\cong \sigma^{\ell_1+\ell_2}(M_1\boxtimes M_2),
    \end{align*}
    using \eqref{eqn:comp-of-spec-flow} and commutativity of the fusion tensor product.
\end{proof}

\subsection{Free field realization}\label{sec:Ada}

In this subsection, we recall the free field realization of $L_k(\mathfrak{sl}_2)$ proved by Adamovi\'c \cite{A} (see also \cite{S}). Let $L = \ZZ c + \ZZ d$ be the lattice such that
\[
\langle c, c \rangle = \langle d, d \rangle = 0,\qquad \langle c, d \rangle = 2,
\]
and let $V_L=\pi^{c,d}\otimes \CC[L]$ be the corresponding lattice conformal vertex algebra \cite{FLM}, where $\pi^{c,d}$ is the rank $2$ Heisenberg vertex operator algebra generated by $c$ and $d$ and $\CC[L]$ is the group algebra of the lattice $L$. Define the conformal vertex subalgebra $\Pi(0) = \pi^{c,d} \otimes \CC[\ZZ c] \subseteq V_L$; its representation theory was studied in \cite{BDT}. Also let $\Vir_{c_k}$ be the simple Virasoro vertex operator algebra of central charge $c_k=1 - \frac{6(k+1)^2}{k+2}$.
\begin{theorem}[\cite{A} Theorem 5.5]\label{thm: conformal embedding}
    If $k$ is a non-integral admissible level of $\mathfrak{sl}_2$, then there is an injective vertex algebra homomorphism $L_k(\mathfrak{sl}_2) \hookrightarrow \Vir_{c_k} \otimes \Pi(0)$ sending $h_{-1}\vac$ to $2\mu_{-1}\vac$, where $\mu=\frac{k}{4}c+\frac{1}{2}d$. This embedding is conformal if the conformal vector of $\Pi(0)$ is taken to be $\frac{1}{2}(c_{-1}d_{-1}-d_{-2}+\frac{k}{2}c_{-2})\vac$.
\end{theorem}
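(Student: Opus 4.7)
The plan is to construct an explicit candidate homomorphism on the Lie algebra generators of $\mathfrak{sl}_2$, verify it satisfies the affine Lie algebra relations at level $k$, and then show that it factors through the simple quotient $L_k(\mathfrak{sl}_2)$. First I would define a vertex algebra map $\varphi:V^k(\mathfrak{sl}_2)\to \Vir_{c_k}\otimes\Pi(0)$ from the universal affine vertex algebra by specifying it on the affine generators as
\[
h_{-1}\vac \mapsto \vac_{\Vir}\otimes 2\mu_{-1}\vac,\qquad e_{-1}\vac \mapsto \vac_{\Vir}\otimes e^c,\qquad f_{-1}\vac \mapsto \omega_{\Vir}\otimes e^{-c} + \vac_{\Vir}\otimes P(c_{-1},d_{-1},\ldots)\, e^{-c},
\]
for a specific polynomial $P$ in Heisenberg modes acting on $e^{-c}$, and extending by the reconstruction theorem. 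Here $e^{\pm c}\in\Pi(0)$ make sense because $\ZZ c\subseteq L$ and $\langle c,c\rangle =0$, so the lattice cocycle is trivial on $\ZZ c$.

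Second, I would verify the $\widehat{\mathfrak{sl}}_2$-OPEs. The Heisenberg relation $\langle 2\mu,2\mu\rangle = \langle \frac{k}{2}c+d,\frac{k}{2}c+d\rangle = k\cdot\langle c,d\rangle/1 \cdot \frac{1}{2}\cdot 2 = 2k$ (using $\langle c,c\rangle=\langle d,d\rangle=0$, $\langle c,d\rangle=2$) immediately gives $[h_m,h_n]=2km\delta_{m+n,0}$, so the level is correct. The Heisenberg contractions of $2\mu$ with $e^{\pm c}$ yield the $\pm 2e^{\pm c}$ eigenvalue, matching $[h_m,e_n]=2e_{m+n}$ and $[h_m,f_n]=-2f_{m+n}$. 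The crux is the $[e_m,f_n]$ relation: the OPE $e^c(z)e^{-c}(w)$ has no pole since $\langle c,-c\rangle = 0$, so the singular part of the $e$-$f$ OPE comes entirely from contractions of $e^c$ with the $\omega_{\Vir}$ factor (producing the Virasoro stress tensor acting on $e^{-c}$) and with $P(c,d)e^{-c}$. Matching the $(z-w)^{-1}$ and $(z-w)^{-2}$ coefficients to $h(w)$ and $k$ respectively forces both the explicit form of $P$ and the central charge $c_k = 1 - \frac{6(k+1)^2}{k+2}$; this is the computation that pins everything down.

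Third, I would show that $\varphi$ factors through $L_k(\mathfrak{sl}_2)$. At a non-integral admissible level $k=-2+u/v$, the kernel of $V^k(\mathfrak{sl}_2)\surj L_k(\mathfrak{sl}_2)$ is the ideal generated by the Malikov--Feigin--Fuchs singular vector $v_{\mathrm{MFF}}$ of $\widehat{\mathfrak{sl}}_2$-weight $2(u-1)\alpha$ and conformal weight $(u-1)(v-1)$. One then checks $\varphi(v_{\mathrm{MFF}})=0$ by examining its $h_0$-weight and conformal weight inside $\Vir_{c_k}\otimes\Pi(0)$: the $h_0=2(u-1)$ weight space of $\Vir_{c_k}\otimes\Pi(0)$ in the relevant conformal degree is controlled by the Heisenberg character, and a degree count combined with the known structure of Virasoro Verma modules at central charge $c_k$ rules out any singular vector of this specific type. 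Alternatively, since $L_k(\mathfrak{sl}_2)$ is a simple vertex algebra, it suffices to observe that $\varphi$ is nonzero and that the subalgebra generated by the image is isomorphic to a simple $L_k$-module over itself, hence the induced map from $L_k$ is automatically injective. Finally, for the conformal statement I would apply $\varphi$ to the Sugawara vector $\omega_{\mathrm{Sug}} = \frac{1}{2(k+2)}\bigl(e_{-1}f_{-1}+f_{-1}e_{-1}+\tfrac{1}{2}h_{-1}^2\bigr)\vac$ and match it to $\omega_{\Vir}\otimes\vac + \vac_{\Vir}\otimes\tfrac{1}{2}(c_{-1}d_{-1}-d_{-2}+\tfrac{k}{2}c_{-2})\vac$ using the explicit form of $\varphi(e), \varphi(f), \varphi(h)$.

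The main obstacle is twofold. The computational bottleneck is the determination of the Heisenberg polynomial $P$ in the image of $f$ and the simultaneous matching of the Virasoro central charge, since the $(z-w)^{-2}$ coefficient of the $e$-$f$ OPE receives contributions from $T_{\Vir}$ as well as from the normal-ordered Heisenberg terms. The conceptual obstacle is verifying that $\varphi$ descends to $L_k(\mathfrak{sl}_2)$: explicit evaluation of the MFF singular vector is impractical, so one should instead argue abstractly, either via simplicity of the image or by comparing with a known structural result (such as the fact that the DS reduction of $L_k(\mathfrak{sl}_2)$ is $\Vir_{c_k}$, which provides a projection $\Vir_{c_k}\otimes\Pi(0)\to \Vir_{c_k}$ consistent with the putative embedding).
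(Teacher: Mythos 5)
This statement is not proved in the paper at all: it is quoted verbatim from Adamovi\'c \cite{A}, Theorem 5.5, so there is no internal argument to compare yours against. Your outline does reproduce the standard strategy behind such free-field realizations (explicit ansatz for $e,h,f$, OPE verification, descent to the simple quotient, Sugawara matching), and your bookkeeping is consistent where you carry it out: $\langle 2\mu,2\mu\rangle=2k$ gives the correct level, $e^{\pm c}$ have $h_0$-eigenvalue $\pm 2$, and with the stated conformal vector of $\Pi(0)$ one finds $e^{c}$ has conformal weight $1$ while $e^{-c}$ has weight $-1$, which is exactly why $f$ must carry a weight-two prefactor such as $\omega_{\Vir}$ plus normally ordered Heisenberg corrections.

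The genuine gap is in your Step 3. The ``alternative'' argument --- that simplicity of $L_k(\mathfrak{sl}_2)$ makes the induced map automatically injective --- is circular: simplicity gives injectivity of a nonzero map \emph{out of} $L_k(\mathfrak{sl}_2)$, but the issue is whether the map out of $V^k(\mathfrak{sl}_2)$ kills the maximal ideal so that an induced map exists in the first place. A vertex subalgebra of a simple vertex algebra need not be simple, so the observation that the image sits inside the simple algebra $\Vir_{c_k}\otimes\Pi(0)$ proves nothing; one really must either show the singular vector maps to zero or prove directly that the image is a simple vertex algebra, and this is the substantive part of Adamovi\'c's theorem that your sketch defers. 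Your singular-vector data is also off: for $k=-2+\frac{u}{v}$ the generator of the maximal ideal of $V^k(\mathfrak{sl}_2)$ has $h_0$-weight $2(u-1)$ (i.e.\ weight $(u-1)\alpha$, not $2(u-1)\alpha$) and conformal weight $(u-1)v$, not $(u-1)(v-1)$; the case $v=1$, where the vector is $e_{-1}^{\,u-1}\vac$ of conformal weight $u-1$, already rules out your formula. Finally, the computation that pins down the Heisenberg polynomial $P$ and the central charge $c_k=1-\frac{6(k+1)^2}{k+2}$ from the $e$--$f$ OPE is precisely where the content of the theorem lives and is left entirely symbolic, so as written this is a correct plan rather than a proof.
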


In particular, any $\Vir_{c_k} \otimes \Pi(0)$-module restricts to an $L_k(\mathfrak{sl}_2)$-module. The conformal vertex algebra $\Pi(0)$ has simple strongly $\CC$-graded modules 
\[
\Pi_\ell(\lambda) = \pi^{c,d}\otimes e^{\ell\mu+\lambda c}\CC[\ZZ c]
\]
for $\ell \in \ZZ$ and $\lambda \in \CC$, where the $\CC$-grading is given by $\mu_0$-eigenvalues.
The simple Virasoro vertex operator algebra $\mathrm{Vir}_{c_k}$ is strongly rational \cite{W}, and any simple $\mathrm{Vir}_{c_k}$-module is isomorphic to the simple highest-weight $\mathrm{Vir}$-module $M_{r,s}^k$ of lowest conformal weight $h_{r,s}=\frac{1}{4uv}((su-rv)^2-(u-v)^2)$ for $1\leq r \leq u-1$ and $1\leq s \leq v-1$; note that $M_{r,s}^k = M_{u-r,v-s}^k$. The fusion rules of the simple $\mathrm{Vir}_{c_k}$-modules are given by
\begin{equation}\label{eqn:Vir-fus-rules}
    M_{r,s}^k \boxtimes M_{r',s'}^k \cong  \bigoplus_{\stackrel{r'' = \vert r-r'\vert+1}{r+r'+r''\,\text{odd}}}^{\min(r+r'-1,2u-r-r'-1)}\bigoplus_{\stackrel{s'' = \vert s-s'\vert+1}{s+s'+s''\,\text{odd}}}^{\min(s+s'-1,2v-s-s'-1)} M_{r'',s''}^k.
\end{equation}
Thus $\Vir_{c_k} \otimes \Pi(0)$ has simple modules $M_{r, s}^k \otimes \Pi_\ell(\lambda)$
for $1\leq r \leq u-1$, $1\leq s \leq v-1$, $\ell \in \ZZ$, and $\lambda \in \mathbb C$. Note that $\Pi_\ell(\lambda) \cong \Pi_\ell(\lambda+1)$, so these weight labels are taken in $\mathbb C/\mathbb Z$.

Let $\cG$ be the restriction functor from the category of generalized $\Vir_{c_k}\otimes\Pi(0)$-modules to the category of generalized $L_k(\mathfrak{sl}_2)$-modules.
We call a simple $\Vir_{c_k} \otimes \Pi(0)$-module $X$ {\em typical} if $\cG(X)$ is a simple $L_k(\mathfrak{sl}_2)$-module, and we call $X$ \textit{atypical} otherwise. Restrictions of simple $\Vir_{c_k}\otimes\Pi(0)$-modules are as follows (see \cite[Section 7]{A}):
\begin{equation}\label{eqn:FF-affine-sl2-correspondence}
\cG(M_{r, s}^k \otimes \Pi_{\ell -1}(\lambda)) \cong  \begin{cases}
    \sigma^\ell(\cE^-_{u-r, v-s}) & \text{if}\,\, \lambda = \nu_{r, s} \ \text{(atypical)} \\
    \sigma^\ell(\cE^-_{r, s}) & \text{if}\,\,\lambda = \nu_{u-r, v-s} \ \text{(atypical)}\\
    \sigma^\ell(\cE_{2\lambda-k, \Delta_{r, s}}) & \text{otherwise \ (typical)} \\
\end{cases} ,
\quad
\text{for} 
\ \ 
\nu_{r, s} = \frac{1}{2}(r-1-t(s-1))).
\end{equation}
The modules $\sigma^\ell(\cD^+_{r, s})$ for $1\leq r\leq u-1$ and $1\leq v\leq s-1$ appear as both submodules and quotients of simple $\Vir_{c_k} \otimes \Pi(0)$-modules. Namely, 
\begin{equation*}
       \cG(M_{r, s}^k \otimes \Pi_{\ell -1}(\nu_{r, s})) \cong \sigma^\ell(\cE^-_{u-r, v-s}) \twoheadrightarrow \sigma^\ell(\cD^+_{r, s}),
\end{equation*} 
and using \eqref{eqn:HW-as-SF-of-LW},
\begin{equation*}
\sigma^\ell(\cD^+_{r, s}) \hookrightarrow \begin{cases}
    \sigma^{\ell+1}(\cE^-_{u-r, v-s -1}) \cong \cG(M_{r, s+1}^k \otimes \Pi_{\ell}(\nu_{r, s+1})) & \text{if}\,\,\,1\leq s \leq v-2 \\
    \sigma^{\ell+2}(\cE^-_{r, v-1}) \cong \cG(M_{u-r, 1}^k \otimes \Pi_{\ell+1}(\nu_{u-r, 1})) & \text{if}\,\,\,s = v-1 \\
\end{cases} .
\end{equation*}

\begin{remark}\label{rem:onetone}
For later use, we formalize the above observation.
Let $\cC=\cC^{\mathrm{wt}}_k(\mathfrak{sl}_2)$, and let $\cD$ be the category of generalized $\Vir_{c_k} \otimes \Pi(0)$-modules $X$ such that $\cG(X)$ is an object of $\cC^{\mathrm{wt}}_k(\mathfrak{sl}_2)$. Then there are injective maps $\tau, \tilde\tau: \mathrm{Irr}(\cC) \rightarrow \mathrm{Irr}(\cD)$ such that $X \hookrightarrow \cG(\tau(X))$ and $\cG(\tilde\tau(X)) \twoheadrightarrow X$ for all $X\in\mathrm{Irr}(\cC)$.
Explicitly, 
\[\tau(\sigma^\ell(\cE_{2\lambda-k, \Delta_{r, s}})) =
\tilde\tau(\sigma^\ell(\cE_{2\lambda-k, \Delta_{r, s}})) = M_{r, s}^k \otimes \Pi_{\ell -1}(\lambda)
\]
if $\lambda \neq \nu_{r, s}, \nu_{u-r, v-s}$ mod $\ZZ$,
and 
\[
\tau(\sigma^\ell(\cD^+_{r, s})) = \begin{cases}
    M_{r, s+1}^k \otimes \Pi_{\ell}(\nu_{r, s+1}) & \text{if}\,\,\, 1\leq s \leq v-2 \\
    M_{u-r, 1}^k \otimes \Pi_{\ell+1}(\nu_{u-r, 1}) & \text{if}\,\,\, s = v-1 \\
\end{cases}, \qquad\quad
\tilde\tau(\sigma^\ell(\cD^+_{r, s})) = M_{r, s}^k \otimes \Pi_{\ell -1}(\nu_{r, s}).
\]
for integers $1\leq r\leq u-1$ and $1\leq s\leq v-1$. After proving Theorem \ref{thm:CAloc-rigid} below, it will follow that $\tau$ and $\tilde{\tau}$ are in fact bijections.
\end{remark}

The contragredient dual of the typical module $\sigma^\ell(\cE_{\lambda, \Delta_{r,s}})$ for $\lambda \notin \pm \lambda_{r,s}+2\ZZ$ is $\sigma^{-\ell}(\cE_{-\lambda, \Delta_{r,s}})$, so
\begin{equation} \label{eq:contragedientdual typical}
    \cG(M_{r, s}^k \otimes \Pi_{\ell}(\lambda))' = (\sigma^{\ell+1}(\cE_{2\lambda-k, \Delta_{r,s}}))' = \sigma^{-\ell-1}(\cE_{-2\lambda + k, \Delta_{r,s}}) = \cG(M_{r, s}^k \otimes \Pi_{-\ell-2}(t-\lambda)).
\end{equation}
if $\lambda\neq\nu_{r,s},\nu_{u-r,v-s}$ mod $\ZZ$. For atypical modules, the contragredient dual of $\sigma^\ell(\cE^-_{u-r, v-s})$ is $\sigma^{-\ell}(\cE^-_{r, s})$, 
so
\begin{equation} \label{eq:contragedientdual atypical}
    \cG(M_{r, s}^k \otimes \Pi_{\ell}(\nu_{r, s}))' = \cG(M_{r, s}^k \otimes \Pi_{-\ell -2}(\nu_{u-r, v-s}))
\end{equation}
for integers $1\leq r\leq u-1$ and $1\leq s\leq v-1$. Finally, we discuss the structure of $\Vir_{c_k}\otimes\Pi(0)$ as an $L_k(\mathfrak{sl}_2)$-module. By the $\ell=r=s=1$ case of \eqref{eqn:FF-affine-sl2-correspondence},
\begin{align*}
    \cG(\Vir_{c_k}\otimes\Pi(0)) = \sigma(\cE_{u-1,v-1}^-),
\end{align*}
and thus by \eqref{eqn:exact-seq-E+E-} and \eqref{eqn:HW-as-SF-of-LW}, there is a non-split short exact sequence
\begin{equation}\label{eqn:A-as-Lk(sl2)-mod}
    0\longrightarrow \cL_{1,0}\longrightarrow \cG(\Vir_{c_k}\otimes\Pi(0))\longrightarrow \sigma(\cD_{1,1}^+)\longrightarrow 0.
\end{equation}
Here $\cL_{1,0}$ is just $L_k(\mathfrak{sl}_2)$ considered as a module for itself. Thus $\Vir_{c_k}\otimes\Pi(0)$ is indecomposable as an $L_k(\mathfrak{sl}_2)$-module, with unique simple submodule $L_k(\mathfrak{sl}_2)$ and unique simple quotient $\sigma(\cD_{1,1}^+)$.

\section{Commutative algebras in braided tensor categories}

In this section, we recall and prove basic results on commutative algebras in braided tensor categories and their relation to conformal vertex algebra extensions.
Then we apply these results particularly to the extension $L_k(\mathfrak{sl}_2)\hookrightarrow \Vir_{c_k}\otimes\Pi(0)$, obtaining fundamental results on the representation theory of $\Vir_{c_k}\otimes\Pi(0)$ that will be needed to prove rigidity of $\cC^{\mathrm{wt}}_k(\sl_2)$ in Section \ref{sec:rigidity}.

\subsection{Tensor categories for conformal vertex algebra extensions}
First, we recall some results on tensor categories for conformal vertex algebra extensions developed in \cite{HKL, CKM-ext, CMSY}. Let $V$ be a vertex operator algebra and $A$ a conformal vertex algebra extension of $V$ in a braided tensor category $\cC$ of $V$-modules. In this paper, we mainly take $V = L_k(\mathfrak{sl}_2)$, $A = \Vir_{c_k} \otimes \Pi(0)$, and $\cC=\cC_k^{\mathrm{wt}}(\mathfrak{sl}_2)$. The extension $A$ has the structure of a commutative algebra in the braided tensor category $\cC$ \cite{HKL},  
and there is a tensor category $\cC_A$ of modules for the commutative algebra $A$ in $\cC$. The subcategory $\cC_A^{\loc}$ of local $A$-modules is a braided tensor category, and by \cite{CKM-ext}, the braided tensor category structure on $\cC_A^{\loc}$ is the same as that given in \cite{HLZ8} for the category of modules for $A$ (considered as a conformal vertex algebra) that lie in $\cC$.

There is a monoidal induction functor $\cF: \cC \rightarrow \cC_A$ given on objects by $\cF(X)=A\boxtimes X$, and the forgetful functor $\cG: \cC_A \rightarrow \cC$ is the right adjoint of $\cF$, that is, Frobenius reciprocity holds:
\begin{lemma}\label{reciprocity}
For objects $X$ in $\cC$ and $Y$ in $\cC_A$,  $\Hom_{\cC_A}(\cF(X), Y) \cong \Hom_{\cC}(X, \cG(Y))$.
\end{lemma}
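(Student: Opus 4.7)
The plan is to construct explicit inverse bijections between the two Hom-sets using the unit and multiplication of the algebra $A$, in the standard style of the induction--restriction adjunction for commutative algebras in monoidal categories. Writing $\mu_A: A\boxtimes A\to A$ and $\iota_A: \vac\to A$ for the multiplication and unit of $A$, and $\mu_Y: A\boxtimes Y\to Y$ for the $A$-action on a module $Y$ in $\cC_A$, I will define
\begin{equation*}
\Phi: \Hom_{\cC_A}(\cF(X),Y)\longrightarrow \Hom_\cC(X,\cG(Y)), \qquad \Phi(f) = f\circ (\iota_A\boxtimes \id_X)\circ l_X^{-1},
\end{equation*}
where $l_X: \vac\boxtimes X\isom X$ is the left unit isomorphism of $\cC$, and conversely
\begin{equation*}
\Psi: \Hom_\cC(X,\cG(Y))\longrightarrow \Hom_{\cC_A}(\cF(X),Y), \qquad \Psi(g) = \mu_Y\circ(\id_A\boxtimes g).
\end{equation*}
Both are manifestly morphisms in $\cC$; the content is that $\Psi(g)$ is actually a morphism of $A$-modules and that $\Phi$ and $\Psi$ are mutually inverse.

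To verify $\Psi(g)\in\Hom_{\cC_A}(\cF(X),Y)$, I would check $\mu_Y\circ(\id_A\boxtimes \Psi(g)) = \Psi(g)\circ\mu_{\cF(X)}$, where the $A$-action $\mu_{\cF(X)}$ on $A\boxtimes X$ is $(\mu_A\boxtimes\id_X)\circ a^{-1}$ with $a$ the associator of $\cC$. This is a direct diagram chase using naturality of $a$ and the module associativity axiom $\mu_Y\circ(\id_A\boxtimes \mu_Y) = \mu_Y\circ(\mu_A\boxtimes\id_Y)\circ a^{-1}$, so it requires no new input beyond the axioms of a module over an algebra in a monoidal category. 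The composition $\Psi\circ\Phi=\id$ will then follow from the fact that any $\cC_A$-morphism $f: A\boxtimes X\to Y$ satisfies $f = \mu_Y\circ(\id_A\boxtimes f)\circ(\id_A\boxtimes\iota_A\boxtimes\id_X)\circ(\id_A\boxtimes l_X^{-1})$ and then simplifies via the unit axiom $\mu_Y\circ(\iota_A\boxtimes\id_Y)\circ l_Y^{-1} = \id_Y$ together with naturality.

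In the other direction, $\Phi\circ\Psi=\id$ reduces to the unit axiom for the $A$-module $Y$: for $g: X\to\cG(Y)$,
\begin{equation*}
\Phi(\Psi(g)) = \mu_Y\circ(\id_A\boxtimes g)\circ(\iota_A\boxtimes\id_X)\circ l_X^{-1} = \mu_Y\circ(\iota_A\boxtimes\id_Y)\circ l_Y^{-1}\circ g = g,
\end{equation*}
using naturality of $l$ and the unit axiom $\mu_Y\circ(\iota_A\boxtimes\id_Y) = l_Y$. I do not anticipate any genuine obstacle here: the statement is formal, and the only mild subtlety is keeping track of associators and unit isomorphisms, which is routine. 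The result does not require commutativity of $A$ or locality of $Y$; it is the basic adjunction between extension of scalars and restriction for an algebra in a monoidal category, as already used in this form in \cite{HKL, CKM-ext}.
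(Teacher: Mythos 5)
Your proof is correct and is exactly the standard induction--restriction adjunction argument; the paper itself gives no proof of this lemma, recalling it as a known result from \cite{HKL, CKM-ext}, where precisely this construction of the mutually inverse maps $\Phi(f) = f\circ(\iota_A\boxtimes\id_X)\circ l_X^{-1}$ and $\Psi(g)=\mu_Y\circ(\id_A\boxtimes g)$ appears. Your closing remark is also accurate: the adjunction itself needs neither commutativity of $A$ nor locality of $Y$ (those hypotheses only enter later, to make $\cC_A$ monoidal and $\cC_A^{\loc}$ braided).
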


To state the next result, we recall the definition of Grothendieck-Verdier category from \cite{BD}: 
   \begin{itemize}
\item A \textit{(braided)  Grothendieck-Verdier category} is a (braided) monoidal category $\cC$ equipped with an anti-equivalence $D:\cC\rightarrow\cC$
such that for each object $X$ in $\cC$, there is a natural isomorphism
    \[
    \Hom_{\cC}(- \boxtimes X, D\vac) \cong \Hom_{\cC}(-, DX).
    \]
    The functor $D$ is called the {\em dualizing functor} of $\cC$.
    \item A {\em ribbon Grothendieck-Verdier category} is a braided Grothendieck-Verdier category $(\cC, D)$ with a twist $\theta$ satisfying the balancing equation and such that $D\theta_X = \theta_{DX}$ for all objects $X$ in $\cC$.
    \item An {\em r-category} is a Grothendieck-Verdier category $\cC$ such that $D\vac\cong\vac$.
\end{itemize}
If $\cC$ is a Grothendieck-Verdier category with dualizing functor $D$ and $A$ is a commutative algebra in $\cC$, then the category $\cC_A$ of $A$-modules in $\cC$ is also a Grothendieck-Verdier
category with dualizing functor $D_A$ such that $\cG\circ D_A =D$ \cite[Theorem 3.9]{CMSY}.

If $V$ is a $\ZZ$-graded vertex operator algebra and $\cC$ is a braided tensor category of $V$-modules that is closed under contragredient duals, then $\cC$ is a ribbon Grothendieck-Verdier category with dualizing functor $(-)'$ of taking contragredient duals and ribbon twist $e^{2\pi i L_0}$ \cite{ALSW}. If $V\cong V'$ as a $V$-module, then $\cC$ is also an r-category. Thus if $A$ is a conformal vertex algebra extension of $V$ in $\cC$, then $\cC_A$ is a Grothendieck-Verdier category, and if additionally $A$ is $\ZZ$- or $\frac{1}{2}\ZZ$-graded by $L_0$-eigenvalues, then by \cite[Theorem 3.11]{CMSY}, $\cC_A^{\loc}$ is a braided Grothendieck-Verdier category.

In this paper, we will prove that $\cC_k^{\mathrm{wt}}(\mathfrak{sl}_2)$ is rigid using the following main result of \cite{CMSY}:
\begin{theorem}[\cite{CMSY} Theorem 3.21]\label{rigidity-of-C-from-CAloc}
Let $(\cC, D, \theta)$ be a locally finite $\CC$-linear abelian r-category with braiding $c$, and let $A$ be a commutative algebra in $\cC$ such that $\theta_A^2=\id_A$ and the unit morphism $\iota_A: {\bf 1} \rightarrow A$ is injective. Suppose the following conditions are satisfied:
\begin{enumerate}
\item $\cC_A^{\loc}$ is rigid and every simple object of $\cC_A$ is an object of $\cC_A^{\loc}$.
\item For any simple object $X$ in $\cC$, the evaluation morphism $e_X: DX \boxtimes X \rightarrow {\bf 1}$ is surjective and any nonzero $\cC_A$-morphism from $\cF(DX)$ to $\cF(X)^*$ is an isomorphism, where $(-)^*$ denotes the left dual of an object in $\cC_A$.
\item For any $\cC$-subobject $s: S \hookrightarrow A$ such that $S$ is not contained in $\mathrm{Im}\,\iota_A$, there exists an object $Z$ in $\cC$ such that $c_{Z,S} \neq c_{S,Z}^{-1}$ and the map $s \boxtimes \id_Z: S\boxtimes Z \rightarrow A \boxtimes Z$  is injective. 
\end{enumerate}
Then $\cC$ is rigid.
\end{theorem}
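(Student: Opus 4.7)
The plan is to prove the theorem by constructing a fully faithful braided monoidal functor $\tilde{\cF}\colon \cC \to \cZ(\cC_A)$ into the Drinfeld center of $\cC_A$, which is rigid, in a way that intertwines the Grothendieck-Verdier duality $D$ on $\cC$ with the left-dual $(-)^*$ on $\cZ(\cC_A)$. Once this is established, rigidity of $\cC$ follows by pulling back the evaluation and coevaluation of $\tilde{\cF}(X)$ in $\cZ(\cC_A)$ through the embedding and verifying the snake identities by faithfulness.

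First, condition (1) together with Theorem \ref{thm:intro-CAloc-to-CA} gives that $\cC_A$ is rigid, and hence its Drinfeld center $\cZ(\cC_A)$ is rigid as well. Since $A$ is commutative in the braided category $\cC$, the induction functor $\cF\colon \cC \to \cC_A$ admits a canonical lift to a braided monoidal functor $\tilde{\cF}\colon \cC \to \cZ(\cC_A)$: the half-braiding of $\tilde{\cF}(X)=A\boxtimes X$ with any $A$-module is built from the braiding $c$ of $\cC$ together with the algebra action of $A$.

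The central technical step is full faithfulness of $\tilde{\cF}$. By Frobenius reciprocity (Lemma \ref{reciprocity}),
\begin{equation*}
\Hom_{\cC_A}(\cF(X),\cF(Y)) \cong \Hom_\cC(X, A\boxtimes Y),
\end{equation*}
and the morphisms lying in $\Hom_{\cZ(\cC_A)}(\tilde{\cF}(X),\tilde{\cF}(Y))$ correspond under this isomorphism to those morphisms $X \to A\boxtimes Y$ factoring through $C(A)\boxtimes Y$, where $C(A)\subseteq A$ is the maximal $\cC$-subobject on which the double braiding with every object of $\cC$ acts trivially. Condition (3) rules out any $\cC$-subobject $s\colon S\hookrightarrow A$ strictly containing $\mathrm{Im}\,\iota_A$ from lying in $C(A)$: the witnessing object $Z$ realizes the non-triviality $c_{Z,S}\neq c_{S,Z}^{-1}$ inside $A\boxtimes Z$ via the injective map $s\boxtimes\id_Z$. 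Thus $C(A)=\mathrm{Im}\,\iota_A\cong\vac$, and hence $\Hom_{\cZ(\cC_A)}(\tilde{\cF}(X),\tilde{\cF}(Y))\cong\Hom_\cC(X,Y)$.

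To show $\tilde{\cF}$ preserves duality, one constructs a natural nonzero $\cC_A$-morphism $\cF(DX)\to\cF(X)^*$ by composing $e_X\colon DX\boxtimes X\to\vac$ with $\iota_A$ to obtain $\cF(DX\boxtimes X)\cong\cF(DX)\boxtimes_A\cF(X)\to A$, and then invoking the left-dual adjunction in $\cC_A$. The surjectivity of $e_X$ and injectivity of $\iota_A$ force this morphism to be nonzero, and the second clause of condition (2) upgrades it to an isomorphism, giving $\tilde{\cF}(DX)\cong\tilde{\cF}(X)^*$ in $\cZ(\cC_A)$. Combined with full faithfulness, the evaluation and coevaluation of $\tilde{\cF}(X)^*$ lift through $\tilde{\cF}$ to morphisms $e_X$ and $\eta_X\colon\vac\to X\boxtimes DX$ in $\cC$ whose snake identities are inherited from $\cZ(\cC_A)$. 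The main obstacle is the centralizer computation in the full-faithfulness step: condition (3) must be unpacked delicately to rule out hidden central subobjects of $A$ beyond the unit, and this is where the non-formal content of the hypothesis is concentrated.
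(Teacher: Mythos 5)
Your proposal follows essentially the same approach as the paper: this theorem is quoted from [CMSY] rather than proved here, but the proof outline the paper records — condition (1) yields rigidity of $\cC_A$ and hence of $\cZ(\cC_A)$, condition (2) yields $\cF(DX)\cong\cF(X)^*$, and conditions (2)–(3) yield full faithfulness of the braided lift $\cC\to\cZ(\cC_A)$, so that $\cC$ inherits rigidity from a rigid category via a duality-compatible embedding — is exactly the strategy you describe, including the centralizer computation $C(A)\cong\mathrm{Im}\,\iota_A$ as the locus of the non-formal content of condition (3).
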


In the proof of this theorem given in \cite{CMSY}, condition (1) is used to show that $\cC_A$ is rigid, and therefore so is its Drinfeld center $\cZ(\cC_A)$. Condition (2) is used to show that induction commutes with duality, that is, $\cF(DX)\cong\cF(X)^*$ for all objects $X$ in $\cC$. This together with condition (3) is then used to prove that the natural lift of $\cF$ to a braided tensor functor from $\cC$ to $\cZ(\cC_A)$ is fully faithful. Then $\cC$ is rigid because it embeds into a rigid category via a tensor functor that commutes with duality.

We now show that condition (3) of Theorem \ref{rigidity-of-C-from-CAloc} holds in a setting that we will show later includes the case $\cC=\cC^{\mathrm{wt}}_k(\mathfrak{sl}_2)$, $A=\Vir_{c_k}\otimes\Pi(0)$. 
\begin{assumption}\label{ass1}
    Let $V\hookrightarrow A$ be an embedding of simple conformal vertex algebras and let $\cC$ be a braided tensor category of $V$-modules such that $A$ is an object of $\cC$. Assume that for any simple object $X$ in $\cC$, there is a simple object $\tau(X)$ in $\cC_A^\loc$ together with an embedding $i_X: X\hookrightarrow\cG(\tau(X))$, such that $\tau(X)$ is M\"{u}ger central in $\cC_A^\loc$ only if $X\cong V$.
\end{assumption}

In the setting of Assumption \ref{ass1}, let $\cS$ be the image of the restriction functor $\cG:\cC_A^\loc\rightarrow\cC$. That is, an object $X$ in $\cC$ is an object of $\cS$ if and only if $X\cong\cG(Z)$ for some object $Z$ in $\cC_A^\loc$.
We say that an object $X$ in $\cC$ centralizes $\cS$ if $X$ centralizes every object in $\cS$, that is, $c_{X,S}=c_{S,X}^{-1}$ for all objects $S$ in $\cC$. The subcategory of $\cC$ consisting of objects that centralize $\cS$ is called the M\"{u}ger centralizer of $\cS$ in $\cC$. 

\begin{theorem}\label{thm:centralizer} 
Under Assumption \ref{ass1}, the only simple object in the M\"{u}ger centralizer of $\cS$ is $V$.
\end{theorem}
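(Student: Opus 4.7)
My plan is to reduce the claim to Assumption \ref{ass1} by showing that for any simple object $X$ in the M\"{u}ger centralizer of $\cS$, the associated simple local $A$-module $\tau(X)$ is itself M\"{u}ger central in $\cC_A^\loc$, which by Assumption \ref{ass1} then forces $X\cong V$.

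First, I would observe that $A=\cG(A)$ is an object of $\cS$, so the hypothesis that $X$ centralizes every object of $\cS$ in particular gives that $X$ centralizes $A$. By the standard description of locality in terms of the double braiding with $A$, this implies that the induced module $\cF(X)=A\boxtimes X$ is local, i.e.\ an object of $\cC_A^\loc$. Next, I would apply Frobenius reciprocity (Lemma \ref{reciprocity}) to the embedding $i_X:X\hookrightarrow\cG(\tau(X))$ supplied by Assumption \ref{ass1}: it corresponds to a non-zero morphism $p:\cF(X)\to\tau(X)$ in $\cC_A$, and since $\tau(X)$ is simple, $p$ is surjective.

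The main step is to show that $\cF(X)$ is M\"{u}ger central in $\cC_A^\loc$. The induction functor restricts to a braided tensor functor from the M\"{u}ger centralizer of $A$ in $\cC$ to $\cC_A^\loc$, so for any $Y\in\cC_A^\loc$, the hypothesis that $X$ centralizes $\cG(Y)\in\cS$ translates into $\cF(X)$ centralizing $\cF\cG(Y)$ in $\cC_A^\loc$. The counit $\epsilon_Y:\cF\cG(Y)=A\boxtimes\cG(Y)\twoheadrightarrow Y$ is surjective because it is split in $\cC$ by the unit-induced map $Y\hookrightarrow A\boxtimes Y$; combining this with naturality of the double braiding and right-exactness of $\cF(X)\boxtimes_A(-)$ then yields $c_{Y,\cF(X)}\circ c_{\cF(X),Y}=\id_{\cF(X)\boxtimes_A Y}$. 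This is the step that will demand the most care, as it hinges on verifying the compatibility of the braided structures on $\cC$ and $\cC_A^\loc$ through induction; one may prefer to phrase it categorically using that the composition $\cC\to\cC_A\to\cC_A/\cC_A^\loc$ kills the M\"{u}ger centralizer of $A$, or explicitly via the coequalizer definition of $\boxtimes_A$.

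Finally, I will transfer M\"{u}ger centrality from $\cF(X)$ to $\tau(X)$ using the surjection $p$. For any $Y\in\cC_A^\loc$, naturality of the braiding gives
\[
(c_{Y,\tau(X)}\circ c_{\tau(X),Y})\circ(p\boxtimes_A\id_Y)=(p\boxtimes_A\id_Y)\circ(c_{Y,\cF(X)}\circ c_{\cF(X),Y})=p\boxtimes_A\id_Y,
\]
and since $-\boxtimes_A Y$ is right-exact on $\cC_A^\loc$, the map $p\boxtimes_A\id_Y$ is surjective, forcing the double braiding on $\tau(X)\boxtimes_A Y$ to be the identity. Thus $\tau(X)$ is M\"{u}ger central in $\cC_A^\loc$, and Assumption \ref{ass1} concludes $X\cong V$. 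Everything besides the middle step is formal; the real content is transporting the centralizing condition from $\cC$ to $\cC_A^\loc$ along induction.
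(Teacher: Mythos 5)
Your strategy is genuinely different from the paper's. The paper argues contrapositively and analytically: if $X\not\cong V$, it picks $Z$ with $M^A_{\tau(X),Z}\neq\id$, expands the tensor product intertwining operator of type $\binom{\tau(X)\boxtimes_A Z}{\tau(X)\,Z}$ in powers of $x$ and $\log x$, restricts along $i_X:X\hookrightarrow\cG(\tau(X))$, and uses Lemma \ref{lem:nonzerofusion} (Dong--Lepowsky) to see that the non-integral powers of $x$ or positive powers of $\log x$ survive, forcing $M_{X,\cG(Z)}\neq\id$. You instead argue directly and categorically, pushing the centralizing condition up through induction to $\cF(X)$ and then down to $\tau(X)$ via the Frobenius-reciprocity surjection $p$. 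Your steps 1, 2 and 4 are fine (quotients of local modules are local, $-\boxtimes_A Y$ is right exact, and naturality of the braiding on $\cC_A^\loc$ gives your displayed identity), and the overall skeleton would yield an alternative proof that trades the intertwining-operator analysis for the projection formula and exactness properties of $\boxtimes_A$.

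However, the justification you give for the crucial middle step does not work as stated. The braided tensor functor is $\cF$ restricted to the M\"uger centralizer $\cC^0$ of $A$ in $\cC$, and it only identifies $c^A_{\cF(X),\cF(W)}$ with $\cF(c_{X,W})$ when \emph{both} $X$ and $W$ lie in $\cC^0$. But for $Y\in\cC_A^\loc$, the object $\cG(Y)$ need not centralize $A$: already for $Y=A$ one has $\cF(\cG(A))=\cF(A)=N$, which by Lemma \ref{lemma:Ncomposition} has a non-split (hence non-local, by semisimplicity of $\cC_A^\loc$) direct summand when $v\geq 3$, so $M_{A,A}\neq\id$ and $\cG(A)\notin\cC^0$. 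In particular $\cF\cG(Y)$ is generally not even an object of $\cC_A^\loc$, so "$\cF(X)$ centralizes $\cF\cG(Y)$ in $\cC_A^\loc$" does not typecheck, and the implication "$X$ centralizes $\cG(Y)$ $\Rightarrow$ $\cF(X)$ centralizes $\cF\cG(Y)$" is not licensed by the cited fact. The repair is your second suggested alternative: compute directly that $M^A_{\cF(X),Y}$ is the descent of $M_{A\boxtimes X,\cG(Y)}$ along the coequalizer defining $\cF(X)\boxtimes_A Y$; by the hexagon this monodromy is assembled from $M_{X,\cG(Y)}$ (which is $\id$ by hypothesis) and $M_{A,\cG(Y)}$ (whose contribution dies on the coequalizer precisely because $Y$ is local). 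This descent computation is the real content of the step and must be carried out, or located in \cite{CKM-ext}, rather than deduced from the braided-functor property on $\cC^0$.
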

\begin{proof}
Let $X$ be a simple object of $\cC$ which is not isomorphic to $V$. Then by hypothesis, there is an object $Z$ of $\cC_A^\loc$ such that the monodromy isomorphism $M^A_{\tau(X),Z}=c^A_{Z,\tau(X)}\circ c^A_{\tau(X),Z}$ is not the identity. Here $c^A$ is the braiding on $\cC_A^\loc$. We will show that the monodromy isomorphism $M_{X,\cG(Z)}=c_{\cG(Z),X}\circ c_{X,\cG(Z)}$ in $\cC$ is also not the identity, and therefore $X$ does not centralize $\cS$.

Let $\cY^A_{\tau(X),Z}:\tau(X)\otimes Z\rightarrow(\tau(X)\boxtimes_A Z)[\log x]\lbrace x\rbrace$ be the tensor product $A$-module intertwining operator of type $\binom{\tau(X)\boxtimes_A Z}{\tau(X)\,\,Z}$. Then from the definition of the braiding isomorphisms in vertex algebraic tensor categories, $M^A_{\tau(X),Z}\neq\id_{\tau(X)\boxtimes_A Z}$ if and only if
\begin{equation*}
    \cY^A_{\tau(X),Z}(-,e^{2\pi i}x)\neq\cY^A_{\tau(X),Z}(-,x)
\end{equation*}
(see for example \cite[Equation 3.15]{CKM-ext}). Consequently, if we write
\begin{equation*}
\cY^A_{\tau(X),Z}(-,x) =\sum_{\lambda+\ZZ\in\CC/\ZZ} \sum_{k=0}^K  x^\lambda (\log x)^k\cY^A_{\lambda,k}(-,x),
\end{equation*}
where $K$ is the highest power of $\log x$ that occurs in $\cY^A_{\tau(X),Z}$ and each $\cY^A_{\lambda,k}: \tau(X)\otimes Z\rightarrow(\tau(X)\boxtimes_A Z)((x))$ is a formal Laurent series, then $\cY^A_{\lambda,K}\neq 0$ for some $\lambda\in\CC$ such that either $\lambda\notin\ZZ$ or $K\neq 0$ (see for example the proof of \cite[Theorem 3.14]{CKL}). Moreover, $x^\lambda\cY^A_{\lambda,K}(-,x)$ is a non-zero intertwining operator of type $\binom{\tau(X)\boxtimes_A Z}{\tau(X)\,\,Z}$ (see \cite[Remark 3.26]{HLZ2}).

Now $x^\lambda\cY^A_{\lambda,K}(i_X(-),x)$ is a non-zero $V$-module intertwining operator of type $\binom{\cG(\tau(X)\boxtimes_A Z)}{X\,\,\cG(Z)}$ by Lemma \ref{lem:nonzerofusion}. Thus $\cY^A_{\tau(X),Z}\circ(i_X\otimes\id_Z)$ is a $V$-module intertwining operator which involves either non-integral powers of $x$ or positive powers of $\log x$. Since there is a unique $V$-module homomorphism $f:X\boxtimes\cG(Z)\rightarrow\cG(\tau(X)\boxtimes_A Z)$ such that $f\circ\cY_{X,\cG(Z)}=\cY^A_{\tau(X),Z}\circ(i_X\otimes\id_Z)$, it follows that the tensor product intertwining operator $\cY_{X,\cG(X)}$ of type $\binom{X\boxtimes\cG(Z)}{X\,\,\cG(Z)}$ also involves non-integral powers of $x$ or positive powers of $\log x$. Thus $M_{X,\cG(Z)}\neq\id_{X\boxtimes\cG(Z)}$ (again by \cite[Equation 3.15]{CKM-ext} and the proof of \cite[Theorem 3.14]{CKL}).
\end{proof}

As a consequence, we have:
 \begin{cor}\label{cor:condition3}
     Under Assumption \ref{ass1}, condition (3) of Theorem \ref{rigidity-of-C-from-CAloc} holds if there is a non-split exact sequence
    \[
    0 \longrightarrow V \xrightarrow{\,\iota_A\,} A \xrightarrow{\,\pi\,} Q \longrightarrow 0
    \]
    where $Q$ is simple in $\cC$ and not isomorphic to $V$.
 \end{cor}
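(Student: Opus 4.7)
The plan is to first classify the subobjects of $A$, and then verify condition (3) only for the one non-trivial case that arises. Since $V$ is a simple conformal vertex algebra, $\mathrm{Im}\,\iota_A\cong V$ is a simple object of $\cC$, and since $Q$ is simple by hypothesis, any subobject $S\hookrightarrow A$ fits into one of two cases. Either $S\supseteq\mathrm{Im}\,\iota_A$, in which case $S/\mathrm{Im}\,\iota_A$ is a subobject of $Q$, forcing $S=\mathrm{Im}\,\iota_A$ or $S=A$; or $S\cap\mathrm{Im}\,\iota_A=0$, in which case $\pi|_S$ is injective, so either $S=0$ or $S\cong Q$ with $\pi|_S$ providing a splitting of the exact sequence. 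The non-splitness hypothesis rules out the latter, so the only subobject of $A$ not contained in $\mathrm{Im}\,\iota_A$ is $S=A$ itself, with inclusion $s=\id_A$. The injectivity requirement in condition (3) is then automatic.

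It remains to produce $Z\in\cC$ with $c_{Z,A}\neq c_{A,Z}^{-1}$. Apply Theorem \ref{thm:centralizer} to the simple object $Q\not\cong V$: there exists $Z$ in the image $\cS$ of $\cG:\cC_A^\loc\rightarrow\cC$ such that the monodromy $M_{Q,Z}=c_{Z,Q}\circ c_{Q,Z}$ is not $\id_{Q\boxtimes Z}$. I would then invoke naturality of the braiding along the epimorphism $\pi:A\twoheadrightarrow Q$: the squares involving $c_{A,Z},c_{Q,Z}$ and $c_{Z,A},c_{Z,Q}$ compose to give
\[
(\pi\boxtimes\id_Z)\circ M_{A,Z}=M_{Q,Z}\circ(\pi\boxtimes\id_Z).
\]
Since $\boxtimes$ is right exact, $\pi\boxtimes\id_Z$ is surjective, so $M_{A,Z}=\id_{A\boxtimes Z}$ would force $M_{Q,Z}=\id_{Q\boxtimes Z}$, contradicting the choice of $Z$. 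Hence $M_{A,Z}\neq\id$, i.e., $c_{A,Z}\neq c_{Z,A}^{-1}$, and condition (3) holds.

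The only genuine step here is the classification of subobjects of $A$, which is immediate from simplicity of $V$ and $Q$ together with non-splitness; everything else is a formal consequence of Theorem \ref{thm:centralizer} plus naturality of the braiding. There is no serious obstacle, provided one is willing to assume that $V$ being a simple conformal vertex algebra implies $V$ is simple as an object of $\cC$, which is standard.
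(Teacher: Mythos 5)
Your proof is correct and follows essentially the same route as the paper's: classify the subobjects of $A$ using simplicity of $V$ and $Q$ plus non-splitness (so the only relevant $S$ is $A$ itself), then apply Theorem \ref{thm:centralizer} to $Q$ and push the non-trivial monodromy back to $A$ via naturality of the braiding and surjectivity of $\pi\boxtimes\id_Z$. The only cosmetic difference is that you phrase the last step in terms of the monodromy $M_{A,Z}$ while the paper compares $c_{Z,A}$ with $c_{A,Z}^{-1}$ directly; these are equivalent.
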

\begin{proof}
The non-split exact sequence for $A$ implies that any
subobject $s: S \hookrightarrow A$ such that $S$ is not contained in $\mathrm{Im}\,\iota_A = V$ is $A$ itself, with $s$ an isomorphism. 
Since $Q$ is not isomorphic to $V$, by Theorem \ref{thm:centralizer} there is an object $Z$ in $\cS$ such that $c_{Z,Q}\neq c_{Q,Z}^{-1}$. Then since $\pi: A\rightarrow Q$ is surjective,
\begin{equation*}
    (\pi\boxtimes\id_Z)\circ c_{Z,A} = c_{Z,Q}\circ(\id_Z\boxtimes\pi) \neq c_{Q,Z}^{-1}\circ(\id_Z\boxtimes\pi) =(\pi\boxtimes\id_Z)\circ c_{A,Z}^{-1},
\end{equation*}
and thus $c_{Z,A}\neq c_{A,Z}^{-1}$ also. Moreover, $s\boxtimes\id_Z: A\boxtimes Z\rightarrow A\boxtimes Z$ is an isomorphism and thus injective, so condition (3) of Theorem \ref{rigidity-of-C-from-CAloc} is satisfied.
\end{proof}

We will show that Assumption \ref{ass1} holds for $V=L_k(\sl_2)$, $A=\Vir_{c_k}\otimes\Pi(0)$, and $\cC=\cC^{\mathrm{wt}}_k(\sl_2)$ in Corollary \ref{cor:condition-3-for-sl2} below.

\subsection{Remarks on simple current extensions}\label{sec:sc}

We now study conformal vertex (super)algebra extensions of rank $2$ Heisenberg vertex operator algebras. The main goal is to prove that $\cC_A^\loc$ is rigid in the case $\cC=\cC^{\mathrm{wt}}_k(\sl_2)$, $A=\Vir_{c_k}\otimes\Pi(0)$, and that condition (3) of Theorem \ref{rigidity-of-C-from-CAloc} holds as well. We also explain how rigidity of weight modules for the $N=2$ super Virasoro algebra follows from rigidity of $\cC^{\mathrm{wt}}_k(\sl_2)$.

Let $A$ be a simple conformal vertex (super)algebra extension of $V \otimes \pi$, where $V$ is some vertex operator algebra and $\pi$ is a rank $1$ Heisenberg vertex operator algebra at non-zero level, of the form
\[
A = \bigoplus_{\ell \in \mathbb Z} V_\ell \otimes \pi_\ell.
\]
We assume that the $V_\ell$ are objects of some braided tensor category $\cC$ of finitely-generated generalized $V$-modules. We also assume that $V$ itself contains a Heisenberg vertex operator subalgebra and that all objects of $\cC$ are weight modules as in Definition \ref{def:wt-mod}. That is, the Heisenberg subalgebra of $V$ acts semisimply and every module in $\cC$ is bi-graded by Heisenberg weight and conformal weight, such that each bi-graded weight space is finite dimensional and there is a lower bound on the conformal weights of each Heisenberg weight space.

Our main example is $A = \Pi(0)$, which is an extension of the rank $2$ Heisenberg vertex operator algebra generated by the fields corresponding to the basis elements $\lbrace c, d\rbrace$ of the lattice $L$. If we change the basis of $\CC\otimes_\ZZ L$ to $\lbrace\mu,\nu\rbrace$ where $c =\frac{2}{k}(\mu - \nu)$ and $d = \mu + \nu$, then $\langle\mu,\mu\rangle=\frac{k}{2}$, $\langle\nu,\nu\rangle=-\frac{k}{2}$, and $\langle\mu,\nu\rangle =0$.
Thus $\mu$ and $\nu$ generate commuting Heisenberg subalgebras of non-zero level for all non-integral admissible $k$, and hence we may take $V$ and $\pi$ to be the Heisenberg subalgebras of $\Pi(0)$ generated by $\mu$ and $\nu$, respectively.

Another example is $A = \mathcal W_\ell(\mathfrak{sl}_{2|1}) \otimes V_{\sqrt{-1}\mathbb Z}$, where $\mathcal W_\ell(\mathfrak{sl}_{2|1})$ is the simple principal $W$-superalgebra of $\mathfrak{sl}_{2|1}$ at level $\ell$, that is, the $N=2$ super Virasoro algebra. The level $\ell$ is related to $k$ via $(\ell+1)(k+2) =1$, and we assume that $k$ is an admissible level for $\mathfrak{sl}_2$. 
In this case, $V = L_k(\mathfrak{sl}_2)$ and the generator of $\pi$ has squared length $- \frac{2}{k+2}$ \cite[Equation 3.7]{CGNS}. This is the Kazama-Suzuki duality \cite{KS}, and it is a special case of the Feigin-Semikhatov duality \cite{FS} that has been proved and studied in \cite{CGN, CGNS}. For the Kazama-Suzuki duality at admissible level, see \cite{CLRW}.

Let $\mathcal C^\pi$ denote the category of $C_1$-cofinite weight $\pi$-modules. This is a rigid semisimple braided tensor category whose simple objects are given by the Fock modules $\pi_\lambda$, $\lambda\in\CC$, on which the zero-mode of the Heisenberg generator of $\pi$ acts by the scalar $\lambda$. The fusion tensor products of simple Fock modules are given by $\pi_\lambda\boxtimes\pi_\mu\cong\pi_{\lambda+\mu}$ for $\lambda,\mu\in\CC$. 
 Let $\mathcal D :=\mathcal C \boxtimes \mathcal C^\pi$ be the Deligne tensor product of $\cC$ and $\cC^\pi$; by \cite{CKM2, Mc}, it is the category of $V\otimes\pi$-modules which decompose as finite direct sums of the form $\bigoplus_\lambda M_\lambda\otimes\pi_\lambda$ where the $M_\lambda$ are objects of $\cC$. Thus $A$ is a conformal vertex (super)algebra in the direct limit completion $\mathrm{Ind}(\cD)$ \cite{CMY4}. 
 
 We are interested in weight $A$-modules $M$, which are tri-graded by conformal weights $\Delta$, weights $\eta$ of the Heisenberg subalgebra of $V$, and weights $\lambda$ of $\pi$. Each tri-graded weight space of $M$ is finite dimensional:
 \[
M = \bigoplus_{\lambda, \eta, \Delta\in\CC} M_{\lambda, \eta, \Delta}, \qquad \dim M_{\lambda, \eta, \Delta} < \infty,
 \]
 and for each $\lambda,\eta\in\CC$, $M_{\lambda,\eta,\Delta}=0$ for $\mathrm{Re}(\Delta)$ sufficiently negative.
 In particular, $M$ has a decomposition
 \begin{equation}\label{eqn:decom-for-E}
 M = \bigoplus_{\lambda\in\CC} M_\lambda \otimes \pi_\lambda
 \end{equation}
such that each $M_\lambda$ is a weight $V$-module. Let $\cE$ denote the subcategory of  $\mathrm{Ind}(\cD)_A^{\loc}$ consisting of all finitely-generated weight $A$-modules $M$ such that in the decomposition \eqref{eqn:decom-for-E}, each $M_\lambda$ is an object of $\cC$.

The  first statement in the following result is essentially the same as one of the statements in \cite[Proposition 3.7]{CMY5} and has essentially the same proof:
\begin{theorem}\label{thm:properties-of-E}
    Any weight $A$-module $M$ in $\cE$ is isomorphic to an induced module $\cF(X)$ for some $X$ in $\cD$. Consequently, $\cE$ is a tensor subcategory of $\mathrm{Ind}(\cD)_A^{\loc}$, and if the category $\cC$ of $V$-modules is locally finite, rigid, or semisimple, then so is $\cE$.
\end{theorem}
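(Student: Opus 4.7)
The plan is to adapt the strategy of \cite[Proposition 3.7]{CMY5} to the present setup, exploiting the simple-current structure of the $\ZZ$-graded extension $A$ to realize every object of $\cE$ as an induced module. The key preliminary observation is that since $A$ is a simple vertex (super)algebra and $V\otimes\pi$ is simple, the $(V\otimes\pi)$-valued component of the algebra multiplication $\mu_{\ell,-\ell}:(V_\ell\otimes\pi_\ell)\boxtimes(V_{-\ell}\otimes\pi_{-\ell})\to V\otimes\pi$ has nonzero image and is therefore surjective for every $\ell\in\ZZ$. Combined with the fact that $\pi_\ell\boxtimes\pi_{-\ell}\cong\pi$ in $\cC^\pi$, this means the $\ZZ$-graded pieces $V_\ell\otimes\pi_\ell$ behave as simple currents when acting on any $A$-module in $\cE$.

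Given $M\in\cE$ with decomposition $M=\bigoplus_{\lambda\in\CC}M_\lambda\otimes\pi_\lambda$, I would first use finite generation as an $A$-module to conclude that the $\pi$-weight support of $M$ is contained in finitely many cosets of $\ZZ$ in $\CC$. Picking representatives $\lambda_1,\ldots,\lambda_n\in\CC$, set $X:=\bigoplus_{i=1}^n M_{\lambda_i}\otimes\pi_{\lambda_i}$, which lies in $\cD$ because each $M_{\lambda_i}$ belongs to $\cC$ and there are only finitely many nonzero summands. The inclusion $X\hookrightarrow M$ in $\mathrm{Ind}(\cD)$, combined with Frobenius reciprocity for the adjunction between $\cF$ and the restriction functor, produces an $A$-module morphism $\phi:\cF(X)\to M$ that restricts on each $\pi$-weight summand at $\lambda_i+\ell$ to the $A$-action map $V_\ell\boxtimes M_{\lambda_i}\to M_{\lambda_i+\ell}$ (tensored with the canonical isomorphism $\pi_\ell\boxtimes\pi_{\lambda_i}\cong\pi_{\lambda_i+\ell}$).

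To show $\phi$ is an isomorphism, I would argue both surjectivity and injectivity via the same associativity-based calculation. For surjectivity, suppose $Q:=M/\mathrm{Im}\,\phi$ is nonzero. Since $X\subseteq\mathrm{Im}\,\phi$, we have $Q_{\lambda_i}=0$ for all $i$, so any nonzero weight space of $Q$ sits at some $\lambda_j+\ell$ with $\ell\neq 0$. Acting on such a weight space by $(V_\ell\otimes\pi_\ell)\boxtimes(V_{-\ell}\otimes\pi_{-\ell})$ in the two-step way first goes through $Q_{\lambda_j}\otimes\pi_{\lambda_j}=0$ and hence vanishes; but by associativity of the $A$-action, the same composite equals the action obtained by first applying $\mu_{\ell,-\ell}$, landing in the image $V\otimes\pi$, and then acting via the unit axiom, which is surjective onto the weight space. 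This contradiction forces $Q=0$. Injectivity follows by applying the identical argument to $\ker\phi\subseteq\cF(X)$, noting that $\phi$ restricts to the identity on the $\ell=0$ summands of $\cF(X)$, so $(\ker\phi)_{\lambda_i}=0$ and the associativity trick forces all other weight spaces of $\ker\phi$ to vanish.

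The categorical consequences are then formal. Closure of $\cE$ under $\boxtimes_A$ holds because $\cF:\cD\to\mathrm{Ind}(\cD)_A^\loc$ is a strong monoidal functor with essential image exactly $\cE$, giving $\cF(X)\boxtimes_A\cF(Y)\cong\cF(X\boxtimes Y)\in\cE$. Local finiteness descends because if $\cC$ is locally finite then so is $\cD=\cC\boxtimes\cC^\pi$, making $X$ of finite length, and hence $M\cong\cF(X)$ of finite length in $\cE$ by exactness of $\cF$. Rigidity descends via $\cF(X)^\vee\cong\cF(X^\vee)$, and semisimplicity descends because every object of $\cE$ is an induction of an object in the semisimple category $\cD$, with inductions of simples decomposing into finitely many simples in $\cE$. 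The principal obstacle is the simple-current calculation showing that $\phi$ is an isomorphism, specifically the associativity argument that converts a two-step vanishing statement into a one-step surjectivity statement via $\mu_{\ell,-\ell}$; once this is in place, the inheritance of categorical properties is essentially automatic.
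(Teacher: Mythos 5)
Your main argument is correct, and it takes a genuinely more self-contained route than the paper for the key step. The paper reduces to a single coset $N_{\lambda+\ZZ}=\bigoplus_{\ell\in\ZZ}M_{\lambda+\ell}\otimes\pi_{\lambda+\ell}$, cites \cite[Proposition 3.7]{CKLR} both for the fact that $M_\lambda\otimes\pi_\lambda$ generates $N_{\lambda+\ZZ}$ and for the isomorphisms $(V_\ell\otimes\pi_\ell)\boxtimes(M_\lambda\otimes\pi_\lambda)\cong M_{\lambda+\ell}\otimes\pi_{\lambda+\ell}$, and then gets injectivity of the induced surjection $\cF(M_\lambda\otimes\pi_\lambda)\twoheadrightarrow N_{\lambda+\ZZ}$ by comparing the (isomorphic, finite-dimensional) simultaneous weight spaces. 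You instead prove surjectivity and injectivity of $\phi:\cF(X)\to M$ directly, by applying the associativity--unit argument (the two-step action through a vanishing weight space versus the one-step action through the surjection $\mu_{\ell,-\ell}$ onto $V\otimes\pi$) to the cokernel and the kernel respectively; this is in effect a from-scratch proof of the simple-current lemma that the paper outsources to \cite{CKLR}, and it works. Note only that surjectivity of $\mu_{\ell,-\ell}$ onto $V\otimes\pi$ uses simplicity of $V\otimes\pi$ together with \cite[Proposition 11.9]{DL} applied to the simple algebra $A$, which you should state explicitly; this is implicit in the paper's standing hypotheses as well.

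The ``consequently'' part of your write-up has two small gaps. First, for local finiteness and semisimplicity you need that $\cF$ sends simple objects of $\cD$ to \emph{finite-length} (in fact simple) objects of $\cE$; exactness of $\cF$ alone does not give finite length of $\cF(X)$ for $X$ of finite length, and ``inductions of simples decomposing into finitely many simples'' is asserted without justification. The paper gets simplicity of $\cF(S)$ from \cite[Proposition 4.4]{CKM-ext} using that the $V_\ell\otimes\pi_\ell$ are pairwise non-isomorphic simple currents; alternatively, your own associativity argument applied to a nonzero $A$-submodule of $\cF(S)$ yields it, but you should say so. Second, local finiteness also requires finite-dimensionality of morphism spaces in $\cE$, which you do not address; the paper derives it from the fact that the finitely many generators of any object of $\cE$ lie in finitely many finite-dimensional tri-graded weight spaces.
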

\begin{proof}
    We may assume that the zero-mode of the Heisenberg generator of $\pi$ acts by the scalar $\ell\in\ZZ$ on the Fock modules $\pi_\ell$ appearing in the decomposition of $A$ as a $V\otimes\pi$-module. Then any module $M$ in $\cE$ has a decomposition $M=\bigoplus_{\lambda+\ZZ\in\CC/\ZZ} N_{\lambda+\ZZ}$ as an $A$-module, where $N_{\lambda+\ZZ}=\bigoplus_{\ell\in\ZZ} M_{\lambda+\ell}\otimes\pi_{\lambda+\ell}$. Since $M$ is finitely generated, only finitely many $N_{\lambda+\ZZ}$ are non-zero. Thus because $\cF$ is an additive functor, it is enough to show that each non-zero $N_{\lambda+\ZZ}$ is an induced module.

    By the proof of \cite[Proposition 3.7]{CKLR}, if $N_{\lambda+\ZZ}$ is non-zero, then $M_\lambda\otimes\pi_\lambda$ is non-zero and generates $N_{\lambda+\ZZ}$ as an $A$-module. So because $M_\lambda$ is an object of $\cC$ by assumption and thus $M_\lambda\otimes\pi_\lambda$ is an object of $\cD$, the inclusion $M_\lambda\otimes\pi_\lambda\hookrightarrow N_{\lambda+\ZZ}$ induces an $A$-module surjection $\cF(M_\lambda\otimes\pi_\lambda)\twoheadrightarrow N_{\lambda+\ZZ}$ by Frobenius reciprocity (Lemma \ref{reciprocity}). Moreover, there are $V\otimes\pi$-module isomorphisms
    \begin{equation*}
        \cF(M_\lambda\otimes\pi_\lambda)\cong\bigoplus_{\ell\in\ZZ} (V_\ell\otimes\pi_\ell)\boxtimes(M_\lambda\otimes\pi_\lambda)\cong\bigoplus_{\ell\in\ZZ} M_{\lambda+\ell}\otimes\pi_{\lambda+\ell}\cong N_{\lambda+\ZZ},
    \end{equation*}
    where the middle isomorphism comes from \cite[Proposition 3.7]{CKLR}. Thus the simultaneous conformal and Heisenberg weight spaces of $\cF(M_\lambda\otimes\pi_\lambda)$ and $N_{\lambda+\ZZ}$ are isomorphic finite-dimensional vector spaces. Since the $A$-module surjection $\cF(M_\lambda\otimes\pi_\lambda)\twoheadrightarrow N_{\lambda+\ZZ}$ preserves these simultaneous weight spaces, it is also injective and therefore an isomorphism. This proves the first statement of the theorem.

    Because induction is a tensor functor, the fusion tensor product of two objects of $\cE$ is still an induced module in $\mathrm{Ind}(\cD)_A^{\loc}$, of the form $\bigoplus_{\lambda\in\CC}\cF(M_\lambda\otimes\pi_\lambda)$ where each $M_\lambda$ is an object of $\cC$ and only finitely many $M_\lambda$ are non-zero. Each $\cF(M_\lambda\otimes\pi_\lambda)$ is an object of $\cE$ because it is finitely generated as an $A$-module by the finitely many $V\otimes\pi$-module generators of $M_\lambda\otimes\pi_\lambda$, and because
\begin{equation*}
    \cF(M_\lambda\otimes\pi_\lambda)\cong\bigoplus_{\ell\in\ZZ} (V_\ell\boxtimes M_{\lambda})\otimes\pi_{\lambda+\ell}
\end{equation*}
as a $V\otimes\pi$-module, where each $V_\ell\boxtimes M_\lambda$ is an object of $\cC$. Thus $\cE$ is closed under tensor products. As the unit object $A$ of $\mathrm{Ind}(\cD)_A^{\loc}$ is also an object of $\cE$, it follows that $\cE$ is a tensor subcategory of $\mathrm{Ind}(\cD)_A^{\loc}$.

    Now the modules $V_\ell\otimes\pi_\ell$ appearing in the decomposition of $A$ as a $V\otimes\pi$-module are simple currents, and if $W$ is a simple $V\otimes\pi$-module in $\cD$, then $(V_\ell\otimes\pi_\ell)\boxtimes W\cong (V_{\ell'}\otimes\pi_{\ell'})\boxtimes W$ only if $\ell =\ell'$ (because the Heisenberg Fock modules have this property). Thus by \cite[Proposition 4.4]{CKM-ext}, $\cF(W)$ is simple for any simple object $W$ in $\cD$. Moreover, the simple currents $V_\ell\otimes\pi_\ell$ are rigid objects of $\cD$, which implies that $\cF$ is exact. Thus induction preserves the length of finite-length objects of $\cD$, and consequently every object of $\cE$ has finite length if every object of $\cC$ has finite length. Morphism spaces in $\cE$ are also finite dimensional since the finitely many generators of any object in $\cE$ are contained in the direct sum of finitely many finite-dimensional tri-graded weight spaces. Thus $\cE$ is locally finite if $\cC$ is.

If $\cC$ is rigid, then so is $\cD$ because $\cC^\pi$ is rigid, and then so is $\cE$ because induction is a tensor functor and thus inductions of rigid objects are rigid (see for example \cite[Exercise 2.10.6]{EGNO}). Finally, if $\cC$ is semisimple, then so is $\cD$ because $\cC^\pi$ is semisimple, and then so is   $\cE$ because $\cF$ is additive.
\end{proof}

We will apply Theorem \ref{thm:properties-of-E} to the example $A=\Pi(0)$ in the next subsection. For now, we consider the example $A=\cW_\ell(\mathfrak{sl}_{2\vert 1})\otimes V_{\sqrt{-1}\ZZ}$, $V=L_k(\mathfrak{sl}_2)$, $\cC=\cC^{\mathrm{wt}}_k(\mathfrak{sl}_2)$ and show that rigidity for the category of finitely-generated weight modules for the $N=2$ super Virasoro algebra at suitable level follows from rigidity of $\cC^{\mathrm{wt}}_k(\sl_2)$:
\begin{theorem}\label{thm:N=2}
    If $k$ is an admissible level for $\mathfrak{sl}_2$ and $\ell\in\QQ$ is defined by $(\ell+1)(k+2)=1$, then the braided tensor category of finitely-generated weight $\cW_\ell(\mathfrak{sl}_{2\vert 1})$-modules is rigid.
\end{theorem}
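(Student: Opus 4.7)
The plan is to apply Theorem \ref{thm:properties-of-E} with the commutative algebra $A = \cW_\ell(\mathfrak{sl}_{2|1}) \otimes V_{\sqrt{-1}\ZZ}$, regarded as the simple current extension of $L_k(\mathfrak{sl}_2) \otimes \pi$ provided by the Kazama-Suzuki duality, where $\pi$ is the rank-one Heisenberg subalgebra of $V_{\sqrt{-1}\ZZ}$ discussed just before Theorem \ref{thm:properties-of-E}. The hypothesis to check is that the homogeneous summands of $A$ under its $L_k(\mathfrak{sl}_2)\otimes\pi$-decomposition all lie in $\cC := \cC^{\mathrm{wt}}_k(\mathfrak{sl}_2)$, which follows from the explicit form of the duality at admissible levels established in \cite{CGN, CLRW}. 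Since Theorem \ref{thm:intro-main-thm} gives rigidity of $\cC$, Theorem \ref{thm:properties-of-E} immediately yields a locally finite rigid tensor category $\cE \subset \mathrm{Ind}(\cD)_A^{\loc}$ of induced finitely-generated weight $A$-modules.

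Next I would transfer this rigidity to the category $\cW$ of finitely-generated weight $\cW_\ell(\mathfrak{sl}_{2|1})$-modules. Because $A = \cW_\ell(\mathfrak{sl}_{2|1}) \otimes V_{\sqrt{-1}\ZZ}$ is a tensor product of commuting vertex superalgebras and the lattice vertex superalgebra $V_{\sqrt{-1}\ZZ}$ has a pointed semisimple rigid (super)tensor category of modules, the assignment $W \mapsto W \otimes V_{\sqrt{-1}\ZZ}$ defines a braided tensor functor $\Phi: \cW \to \cE$. That $\Phi$ takes values in $\cE$ follows by restricting along $L_k(\mathfrak{sl}_2) \otimes \pi \hookrightarrow A$ and invoking the Kazama-Suzuki decomposition of weight $\cW_\ell$-modules into objects of $\cC \boxtimes \cC^\pi$. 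Conversely, every object of $\cE$, viewed as a $\cW_\ell \otimes V_{\sqrt{-1}\ZZ}$-module, decomposes as a finite direct sum $\bigoplus_j W_j \otimes U_j$ with $W_j \in \cW$ and $U_j$ a simple $V_{\sqrt{-1}\ZZ}$-module, so $\cW$ sits inside $\cE$ as the $V_{\sqrt{-1}\ZZ}$-isotypic component at the vacuum.

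Rigidity then descends across $\Phi$: given $W$ in $\cW$, the dual $\Phi(W)^*$ in $\cE$ must be of the form $W^\vee \otimes V_{\sqrt{-1}\ZZ}$ for a uniquely determined $W^\vee \in \cW$, because $V_{\sqrt{-1}\ZZ}$ is self-dual as a module over itself and tensoring with it is an equivalence on the $V_{\sqrt{-1}\ZZ}$-module side. The evaluation and coevaluation morphisms for $\Phi(W)$ in $\cE$ then project onto morphisms witnessing $W^\vee$ as the left dual of $W$ in $\cW$. The main technical obstacle is verifying that the braided tensor structure on $\cW$ inherited through $\Phi$ from $\cE$ agrees with the intrinsic Huang-Lepowsky-Zhang structure, and that this descent of the duality data is coherent; this is precisely the kind of compatibility that the simple current extension machinery set up in Section \ref{sec:sc} is designed to provide, so I would expect the verification to reduce to general results proved earlier in that section.
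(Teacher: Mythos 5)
Your proposal is correct and follows essentially the same route as the paper: apply Theorem \ref{thm:properties-of-E} to $A=\cW_\ell(\mathfrak{sl}_{2\vert 1})\otimes V_{\sqrt{-1}\ZZ}$ as a simple current extension of $L_k(\mathfrak{sl}_2)\otimes\pi$ to get rigidity of $\cE$ from Theorem \ref{thm:intro-main-thm}, then embed the weight $\cW_\ell(\mathfrak{sl}_{2\vert 1})$-module category into $\cE$ via $M\mapsto M\otimes V_{\sqrt{-1}\ZZ}$ and descend rigidity. The only difference is that the paper dispatches your final ``technical obstacle'' (coherence of the descent of duality data) by noting the embedding commutes with contragredient duals and citing \cite[Lemma 3.15]{CMSY}, rather than re-deriving it by hand.
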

\begin{proof}
    Here, the tensor category $\cE$ of Theorem \ref{thm:properties-of-E} is the category of finitely-generated weight $\cW_\ell(\mathfrak{sl}_{2\vert 1})\otimes V_{\sqrt{-1}\ZZ}$-modules $M$ which have $L_k(\mathfrak{sl}_2)\otimes\pi$-module decompositions of the form $M=\bigoplus_{\lambda\in\CC} M_\lambda\otimes\pi_\lambda$ such that each $M_\lambda$ an object of $\cC^{\mathrm{wt}}_k(\mathfrak{sl}_2)$. By Theorems \ref{thm:intro-main-thm} and \ref{thm:properties-of-E}, this category $\cE$ is rigid. Moreover, there is a tensor embedding of the category of finitely-generated weight $\cW_\ell(\mathfrak{sl}_{2\vert 1})$-modules into $\cE$ defined on objects by $M\mapsto M\otimes V_{\sqrt{-1}\ZZ}$. As this functor commutes with contragredient duals, the category of finitely-generated weight $\cW_\ell(\mathfrak{sl}_{2\vert 1})$-modules inherits rigidity from $\cE$ (see \cite[Lemma 3.15]{CMSY}).
\end{proof}

\subsection{Local modules for the free field realization}

In this subsection, we first apply Theorem \ref{thm:properties-of-E} to the example $A=\Pi(0)$, where $V$ and $\pi$ are the Heisenberg subalgebras generated by $\mu$ and $\nu$, respectively, and $\cC=\cC^\pi$ is the category of $C_1$-cofinite weight $V$-modules (so $\cD=\cC^\pi\boxtimes\cC^\pi$).
We then give a comprehensive description of the local module category $\cC^{\mathrm{wt}}_k(\mathfrak{sl}_2)_{\Vir_{c_k}\otimes\Pi(0)}^\loc$ for non-integral admissible levels $k$. Let $\cC^{\mathrm{wt}}_{\Pi(0)}$ denote the category of finitely-generated weight $\Pi(0)$-modules.

\begin{theorem}\label{thm:Pi(0)-tens-cat-properties}
    The category $\cC^{\mathrm{wt}}_{\Pi(0)}$ is a rigid semisimple non-degenerate braided tensor category and every simple object of $\cC^{\mathrm{wt}}_{\Pi(0)}$ is isomorphic to $\Pi_\ell(\lambda)$ for some $\ell\in\ZZ$ and $\lambda+\ZZ\in\CC/\ZZ$. Moreover, fusion tensor products of simple modules are given by
    \begin{equation}\label{eqn:P(0)-fusion-rules}
\Pi_\ell(\lambda) \boxtimes \Pi_{\ell'}(\lambda') \cong \Pi_{\ell+ \ell'}(\lambda + \lambda')
\end{equation}
for $\ell,\ell'\in\ZZ$ and $\lambda+\ZZ,\lambda'+\ZZ\in\CC/\ZZ$.
\end{theorem}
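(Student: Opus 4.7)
The plan is to apply Theorem \ref{thm:properties-of-E} to the simple current extension
\[
\Pi(0)=\bigoplus_{\ell\in\ZZ}\pi^{\mu}_{\ell}\otimes\pi^{\nu}_{\ell},
\]
with $V=\pi^{\mu}$ and $\pi=\pi^{\nu}$ the two commuting rank-$1$ Heisenberg subalgebras of $\Pi(0)$ from the paragraph preceding that theorem (both at non-zero level $\pm k/2$ for non-integral admissible $k$), and $\cC=\cC^{\mu}$ the rigid semisimple braided tensor category of weight $\pi^{\mu}$-modules with simple objects the Fock modules $\pi^{\mu}_\alpha$. The decomposition of $\Pi(0)$ above is verified by computing $\mu_0$- and $\nu_0$-eigenvalues on the lattice generators $e^{\ell c}$ using $c=\tfrac{2}{k}(\mu-\nu)$. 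Theorem \ref{thm:properties-of-E} then immediately yields that the resulting subcategory $\cE\subseteq\mathrm{Ind}(\cD)^{\loc}_{\Pi(0)}$, with $\cD=\cC^{\mu}\boxtimes\cC^{\nu}$, is rigid and semisimple.

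The next step is to identify $\cE$ with $\cC^{\mathrm{wt}}_{\Pi(0)}$. For any finitely-generated weight $\Pi(0)$-module $M$, the commuting zero modes $\mu_0,\nu_0$ act semisimply with finite-dimensional joint eigenspaces, and the $\pi^{\nu}$-action is a weight action with bounded-below conformal weights, so $M$ decomposes $\pi^{\mu}\otimes\pi^{\nu}$-isotypically as $\bigoplus_{\lambda}M_\lambda\otimes\pi^{\nu}_\lambda$, with each multiplicity space $M_\lambda$ a finite direct sum of Fock $\pi^{\mu}$-modules by finite generation of $M$, hence an object of $\cC^{\mu}$. By the proof of Theorem \ref{thm:properties-of-E}, every simple object of $\cE$ has the form $\cF(\pi^{\mu}_\alpha\otimes\pi^{\nu}_\beta)$; matching Fock decompositions identifies $\cF(\pi^{\mu}_{k\ell/2+\lambda}\otimes\pi^{\nu}_\lambda)\cong\Pi_\ell(\lambda)$, and computing monodromies of $\pi^{\mu}_\alpha\otimes\pi^{\nu}_\beta$ with the summands of $\Pi(0)$ shows the locality condition is $\alpha-\beta\in\tfrac{k}{2}\ZZ$, so the local simples are exactly the $\Pi_\ell(\lambda)$ for $\ell\in\ZZ$ and $\lambda+\ZZ\in\CC/\ZZ$. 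The fusion rule \eqref{eqn:P(0)-fusion-rules} then follows immediately from monoidality of $\cF$ together with $\pi_\alpha\boxtimes\pi_{\alpha'}\cong\pi_{\alpha+\alpha'}$ in each Heisenberg factor.

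For non-degeneracy, I use $\theta_{\pi_\alpha}=e^{\pi i\alpha^2/\kappa}$ and the balancing equation to compute the monodromy of $\Pi_\ell(\lambda)$ with $\Pi_{\ell'}(\lambda')$ on any matched pair of Fock components; it simplifies to $e^{2\pi i(\frac{k}{2}\ell\ell'+\ell\lambda'+\ell'\lambda)}$. Requiring this to equal $1$ for all $(\ell',\lambda')$: setting $\ell'=0$ forces $e^{2\pi i\ell\lambda'}=1$ for all $\lambda'\in\CC$, hence $\ell=0$; then varying $\ell'\in\ZZ$ forces $\lambda\in\ZZ$, so $\lambda+\ZZ=0$. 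Thus only $\Pi_0(0)=\Pi(0)$ lies in the M\"{u}ger center, proving non-degeneracy. The most delicate point in this plan is the decomposition and $\cC^{\mu}$-membership of the multiplicity spaces $M_\lambda$ in the second step; once this is established, the remaining classification, fusion, and non-degeneracy reduce to elementary Heisenberg and simple-current calculations.
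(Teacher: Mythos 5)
Your proposal is correct and follows essentially the same route as the paper's proof: identify $\cC^{\mathrm{wt}}_{\Pi(0)}$ with the category $\cE$ of Theorem \ref{thm:properties-of-E} via the decomposition of finitely-generated weight modules into Fock components, recognize the simples as inductions of rank-$2$ Fock modules, determine locality by a monodromy computation, and deduce fusion and non-degeneracy from the Heisenberg monodromy formula. The only difference is cosmetic: you work in the $\{\mu,\nu\}$ basis throughout, whereas the paper switches back to $\{c,d\}$ for labeling Fock modules and checking locality; the resulting conditions and formulas agree.
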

\begin{proof}
    We need to show that $\cC^{\mathrm{wt}}_{\Pi(0)}$ is the same as the category $\cE$ in Theorem \ref{thm:properties-of-E}. The inclusion of $\cE$ into $\cC^{\mathrm{wt}}_{\Pi(0)}$ follows from the definition of $\cE$. Conversely, if $M$ is a finitely-generated weight $\Pi(0)$-module, then $M=\bigoplus_{\lambda\in\CC} M_\lambda\otimes\pi_\lambda$ where each $M_\lambda$ is a weight module for the Heisenberg subalgebra of $\Pi(0)$ generated by $\mu$, that is, $M_\lambda=\bigoplus_{\eta\in\CC} V_{\lambda,\eta}\otimes\pi_\eta$ where each $V_{\lambda,\eta}$ is a finite-dimensional vector space. The finitely many generators of $M$ are contained in the direct sum of finitely many $V_{\lambda,\eta}\otimes\pi_\eta\otimes\pi_\lambda$, and this implies that $M$ is the direct sum of $V_{\lambda+\ell,\eta+\ell}\otimes\pi_{\eta+\ell}\otimes\pi_{\lambda+\ell}$ for finitely many $\lambda,\eta\in\CC$ and all $\ell\in\ZZ$. This implies $M_\lambda$ has finite length for each $\lambda\in\CC$ and is thus an object of the category $\cC^\pi$ of $C_1$-cofinite weight $\pi$-modules. Hence $M$ is an object of $\cE$, as required.

    It now follows from Theorem \ref{thm:properties-of-E} that $\cC^{\mathrm{wt}}_{\Pi(0)}$ is a rigid semisimple braided tensor category, and its simple objects are given by $\cF(\pi_\eta\otimes\pi_\lambda)$ for certain $\eta,\lambda\in\CC$. Changing generators from $\lbrace\mu,\nu\rbrace$ back to $\lbrace c,d\rbrace$ again, let $\pi_{a, b}$ be the Fock module for the rank $2$ Heisenberg subalgebra of $\Pi(0)$ on which the zero-modes of $c$ and $d$ act by the scalars $a$ and $b$, respectively. Then $\Pi_\ell(\lambda)$ contains the Fock submodule $\pi^{c,d}\otimes e^{\ell\mu+\lambda c}\cong\pi_{\ell,2\lambda+\frac{k}{2}\ell}$, so by Lemma \ref{reciprocity}, there is a non-zero $\Pi(0)$-module map $\cF(\pi_{\ell,2\lambda+\frac{k}{2}\ell})\rightarrow\Pi_\ell(\lambda)$ which must be an isomorphism because both modules are simple. Then since $\cF$ is a tensor functor,
    \begin{align*}
        \Pi_\ell(\lambda)\boxtimes\Pi_{\ell'}(\lambda') & \cong\cF(\pi_{\ell,2\lambda+\frac{k}{2}\ell})\boxtimes\cF(\pi_{\ell',2\lambda'+\frac{k}{2}\ell'}) \cong \cF(\pi_{\ell,2\lambda+\frac{k}{2}\ell}\boxtimes\pi_{\ell',2\lambda'+\frac{k}{2}\ell'})\nonumber\\
        &\cong\cF(\pi_{\ell+\ell',2(\lambda+\lambda')+\frac{k}{2}(\ell+\ell')})\cong\Pi_{\ell+\ell'}(\lambda+\lambda')
    \end{align*}
    for $\ell,\ell'\in\ZZ$ and $\lambda+\ZZ,\lambda'+\ZZ\in\CC/\ZZ$.

    To show that every simple object of $\cC^{\mathrm{wt}}_{\Pi(0)}$ is isomorphic to $\Pi_\ell(\lambda)$ for some $\ell\in\ZZ$ and $\lambda+\ZZ\in\CC/\ZZ$, and to show that $\cC^{\mathrm{wt}}_{\Pi(0)}$ is non-degenerate, we need to compute monodromy isomorphisms in $\cC^\pi\boxtimes\cC^\pi$ and $\cC^{\mathrm{wt}}_{\Pi(0)}$. The balancing equation for the ribbon twist $e^{2\pi i L_0}$ implies that the double braidings on both $\pi_{\ell,2\lambda+\frac{k}{2}\ell}\boxtimes\pi_{\ell',2\lambda'+\frac{k}{2}\ell'}\cong\pi_{\ell+\ell',2(\lambda+\lambda')+\frac{k}{2}(\ell+\ell')}$ in $\cC^\pi\boxtimes\cC^\pi$ and on $\Pi_\ell(\lambda)\boxtimes\Pi_{\ell'}(\lambda')\cong\Pi_{\ell+\ell'}(\lambda+\lambda')$ in $\cC^{\mathrm{wt}}_{\Pi(0)}$ are given by the scalar
    \begin{equation*}
        e^{2\pi i(h_{\ell+\ell',2(\lambda+\lambda')+k(\ell+\ell')/2}-h_{\ell,2\lambda+k\ell/2}-h_{\ell',2\lambda'+k\ell'/2})},
    \end{equation*}
    where the conformal weights are given by
     \begin{equation}\label{eqn:conf-wt-Pi(0)}
        h_{\ell,2\lambda+\frac{k}{2}\ell} = \frac{1}{2}\left(\langle \ell\mu+\lambda c,\ell\mu+\lambda c\rangle +\langle d,\ell\mu+\lambda c\rangle -\frac{k}{2}\langle c,\ell\mu+\lambda c\rangle\right) =
        \frac{k}{4}\ell^2+\lambda(\ell+1),
    \end{equation}
    using the conformal vector for $\pi^{c,d}\subseteq\Pi(0)$ in Theorem \ref{thm: conformal embedding}. So $M_{\pi_{\ell,2\lambda+k\ell/2}, \pi_{\ell',2\lambda'+k\ell'/2}}$ and $M_{\Pi_\ell(\lambda),\Pi_{\ell'}(\lambda')}$ are both scalar multiplications by
    \begin{equation}\label{eqn:monodromy}
 e^{2\pi i(k\ell\ell'/2+\lambda\ell'+\lambda'\ell)}
    \end{equation}
    for $\lambda,\lambda'\in\CC$ and $\ell,\ell'\in\CC$ (for the Heisenberg case) or $\ell,\ell'\in\ZZ$ (for the $\Pi(0)$ case).
    
    Now to classify the simple objects of $\cC^{\mathrm{wt}}_{\Pi(0)}$, we need to determine which Fock modules $\pi_{a,b}$ induce to local $\Pi(0)$-modules, that is, objects of $\mathrm{Ind}(\cC^\pi\boxtimes\cC^\pi)_{\Pi(0)}^{\loc}$. To this end, note that for any $a,b\in\CC$, $\pi_{a,b}=\pi_{\ell,2\lambda+\frac{k}{2}\ell}$ for unique $\ell,\lambda\in\CC$. By \cite[Proposition 2.65]{CKM-ext}, the induction of this module is local if and only if the double braiding of $\Pi(0)$ with $\pi_{a,b}$ in $\mathrm{Ind}(\cC^\pi\boxtimes\cC^\pi)$ is the identity. As $\Pi(0)\cong\bigoplus_{n\in\ZZ} \pi_{0,2n}$ as $\pi^{c,d}$-module, $\cF(\pi_{\ell,2\lambda+\frac{k}{2}\ell})$ is an object of $\cC^{\mathrm{wt}}_{\Pi(0)}$ if and only if
    \begin{equation*}
        M_{\pi_{\ell,2\lambda+k\ell/2}, \pi_{0,2n}} = e^{2\pi i n\ell} = 1
    \end{equation*}
    for all $n\in\ZZ$, that is, if and only if $\ell\in\ZZ$. Thus every simple object of $\cC^{\mathrm{wt}}_{\Pi(0)}$ is isomorphic to $\cF(\pi_{\ell,2\lambda+\frac{k}{2}\ell})\cong\Pi_\ell(\lambda)$ for some $\ell\in\ZZ$ and $\lambda+\ZZ\in\CC/\ZZ$.
   
    Finally, to show that $\cC^{\mathrm{wt}}_{\Pi(0)}$ is non-degenerate, it is enough to show that for any $\ell\in\ZZ$ and $\lambda+\ZZ\in\CC/\ZZ$ such that $(\ell,\lambda+\ZZ)\neq(0,\ZZ)$, there exist $\ell'\in\ZZ$ and $\lambda'+\ZZ\in\CC/\ZZ$ such that $M_{\Pi_\ell(\lambda),\Pi_{\ell'}(\lambda')}$ is not the identity. Indeed, if $\ell\neq 0$, then by \eqref{eqn:monodromy} we can take $\ell'=0$ and any $\lambda'\notin\frac{1}{\ell}\ZZ$. Then if $\ell=0$ and $\lambda\notin\ZZ$, we can take $\ell'=1$ and $\lambda'$ arbitrary.
\end{proof}

To simplify notation, we now fix $\cC=\cC_k^{\mathrm{wt}}(\mathfrak{sl}_2)$ and $A=\Vir_{c_k}\otimes\Pi(0)$. Also let $\cC^{\Vir}_k$ denote the modular tensor category of $\Vir_{c_k}$-modules. In the next theorem, we show that $\cC_A^\loc$ is equivalent to the Deligne tensor product of $\cC^\Vir_k$ and $\cC^{\mathrm{wt}}_{\Pi(0)}$:
\begin{theorem}\label{thm:CAloc-rigid}
    If $k=-2+\frac{u}{v}$ is a non-integral admissible level for $\mathfrak{sl}_2$, then $\cC_A^\loc$ is braided tensor equivalent to $\cC^\Vir_{c_k}\boxtimes\cC^{\mathrm{wt}}_{\Pi(0)}$. In particular, $\cC_A^\loc$ is rigid, semisimple, and non-degenerate; any simple object of $\cC_A^\loc$ is isomorphic to $M^k_{r,s}\otimes\Pi_\ell(\lambda)$ for some $1\leq r\leq u-1$, $1\leq s\leq v-1$, $\ell\in\ZZ$, and $\lambda+\ZZ\in\CC/\ZZ$;
   and the fusion rules for simple modules in $\cC_A^\loc$ are given by
   \begin{equation}\label{eqn:A-fusion-rules}
       (M^k_{r,s}\otimes\Pi_\ell(\lambda))\boxtimes_A(M^k_{r',s'}\otimes\Pi_{\ell'}(\lambda'))\cong  \bigoplus_{\stackrel{r'' = \vert r-r'\vert+1}{r+r'+r''\,\text{odd}}}^{\min(r+r'-1,2u-r-r'-1)}\bigoplus_{\stackrel{s'' = \vert s-s'\vert+1}{s+s'+s''\,\text{odd}}}^{\min(s+s'-1,2v-s-s'-1)} M_{r'',s''}^k\otimes\Pi_{\ell+\ell'}(\lambda+\lambda')
   \end{equation}
   for $1\leq r,r'\leq u-1$, $1\leq s,s'\leq v-1$, $\ell,\ell'\in\ZZ$, and $\lambda+\ZZ,\lambda'+\ZZ\in\CC/\ZZ$.
\end{theorem}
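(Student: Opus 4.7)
The strategy is to identify $\cC_A^\loc$ with the Deligne tensor product $\cC^\Vir_{c_k}\boxtimes\cC^{\mathrm{wt}}_{\Pi(0)}$, realized as a category of $\Vir_{c_k}\otimes\Pi(0)$-modules in the sense of \cite{CKM2, Mc}. First I would establish an embedding $\cC^\Vir_{c_k}\boxtimes\cC^{\mathrm{wt}}_{\Pi(0)}\hookrightarrow\cC_A^\loc$ by checking that for each $1\leq r\leq u-1$, $1\leq s\leq v-1$, $\ell\in\ZZ$, and $\lambda+\ZZ\in\CC/\ZZ$, the external tensor product $M^k_{r,s}\otimes\Pi_\ell(\lambda)$ is an object of $\cC_A^\loc$: it is automatically an $A$-module and therefore local, and its restriction to $L_k(\mathfrak{sl}_2)$ lies in $\cC=\cC^{\mathrm{wt}}_k(\mathfrak{sl}_2)$ by \eqref{eqn:FF-affine-sl2-correspondence}, since it is a spectral flow of a relaxed or reducible weight module.

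Next I would prove essential surjectivity: every simple object of $\cC_A^\loc$ is isomorphic to some $M^k_{r,s}\otimes\Pi_\ell(\lambda)$. Given such a simple $W$, rationality of $\Vir_{c_k}$ together with standard results for modules of tensor products of vertex algebras (\cite{FHL}) forces $W\cong M^k_{r,s}\otimes N$ for a unique pair $(r,s)$ and a simple $\Pi(0)$-module $N$. To identify $N$, I would use the embedding of Theorem \ref{thm: conformal embedding}: since $h_{-1}\vac\mapsto 2\mu_{-1}\vac$, the weight-module axioms of Definition \ref{def:wt-mod} on $\cG(W)\in\cC$ imply semisimplicity of $\mu_0$ on $N$ with finite-dimensional generalized weight spaces, lower conformal weight bounds on each $\mu_0$-eigenspace, and finite generation. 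This places $N$ in $\cC^{\mathrm{wt}}_{\Pi(0)}$, and then Theorem \ref{thm:Pi(0)-tens-cat-properties} yields $N\cong\Pi_\ell(\lambda)$ for some $\ell\in\ZZ$ and $\lambda+\ZZ\in\CC/\ZZ$. The non-simple case is then handled by observing that any short exact sequence in $\cC_A^\loc$ decomposes into $\Vir_{c_k}$-isotypic components, each of which is an extension in $\cC^{\mathrm{wt}}_{\Pi(0)}$ of the corresponding $\Pi_\ell(\lambda)$'s, and therefore splits by semisimplicity of $\cC^{\mathrm{wt}}_{\Pi(0)}$.

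With the classification and semisimplicity established, the fusion rules \eqref{eqn:A-fusion-rules} are immediate by combining the Virasoro fusion rules \eqref{eqn:Vir-fus-rules} with the $\Pi(0)$ fusion rules \eqref{eqn:P(0)-fusion-rules} through the Deligne product structure of induction, while rigidity and non-degeneracy of $\cC_A^\loc$ are inherited from both factors ($\cC^\Vir_{c_k}$ by rationality and $\cC^{\mathrm{wt}}_{\Pi(0)}$ by Theorem \ref{thm:Pi(0)-tens-cat-properties}). The main obstacle will be the essential surjectivity step: ruling out ``twisted'' simple $A$-modules (simples not of external product form) and then pinning down $N$ via weight conditions inherited from the $L_k(\mathfrak{sl}_2)$-restriction. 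Both points require some care with the infinite-dimensional conformal weight spaces occurring in $\Pi(0)$-modules, but they follow once one writes out the $\mu_0$-action explicitly using $\mu=\frac{k}{4}c+\frac{1}{2}d$ and matches the conformal grading of $A$-modules with Definition \ref{def:wt-mod}. The equivalence of braided tensor structures across the identification then follows from the compatibility of induction with external tensor products of intertwining operators for vertex algebra tensor products, as developed in \cite{CKM2, Mc}.
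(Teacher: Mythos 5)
Your overall architecture matches the paper's proof (embed the Deligne product $\cC^\Vir_{c_k}\boxtimes\cC^{\mathrm{wt}}_{\Pi(0)}$ as a tensor subcategory of $\cC_A^\loc$, then show $\cC_A^\loc$ is semisimple with the same simples), but there is a genuine gap at the crux of the essential surjectivity and semisimplicity steps. To place the multiplicity space $N=\Hom_{\Vir_{c_k}}(M^k_{r,s},W)$ in $\cC^{\mathrm{wt}}_{\Pi(0)}$ you must show that \emph{both} zero-modes of the rank $2$ Heisenberg subalgebra of $\Pi(0)$ act semisimply. The restriction to $L_k(\mathfrak{sl}_2)$ only controls $\mu_0=\tfrac{1}{2}h_0$; it says nothing about the linearly independent zero-mode $c_0$ (equivalently $\nu_0$). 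Your proposal never supplies this, and the closing remark that it ``follows once one writes out the $\mu_0$-action explicitly'' does not address it. In the simple case the paper closes this by observing that $c_0$ is a $\Pi(0)$-module endomorphism of $N$ (since $c_0\Pi(0)=0$), has an eigenvector because it preserves the finite-dimensional simultaneous $L_0$- and $\mu_0$-eigenspaces, and hence acts by a scalar by simplicity — an easy but necessary argument you should include.

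The gap is more serious in your treatment of non-simple objects. You assert that each $\Vir_{c_k}$-isotypic component of an extension ``is an extension in $\cC^{\mathrm{wt}}_{\Pi(0)}$ of the corresponding $\Pi_\ell(\lambda)$'s, and therefore splits by semisimplicity of $\cC^{\mathrm{wt}}_{\Pi(0)}$.'' This begs the question: Fock-type modules for Heisenberg and lattice-type algebras admit nontrivial self-extensions on which a zero-mode acts by a Jordan block, and such extensions live outside the weight category, so semisimplicity of $\cC^{\mathrm{wt}}_{\Pi(0)}$ cannot be invoked until you have proved that the extension stays inside it, i.e.\ that $\nu_0$ acts semisimply on the multiplicity space $\widetilde{W}$ of a self-extension. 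This is exactly where the paper does real work: after reducing (via simple-current twisting and eigenvalue separation) to self-extensions of $M^k_{r,s}\otimes\Pi_{-1}(0)$, it uses the classification of lower-bounded weight $L_k(\mathfrak{sl}_2)$-modules from \cite{ACK} to conclude that $L_0$ acts semisimply on $\widetilde{M}$, rewrites $L_0$ in terms of $\mu_0$ and $\nu_0$ to obtain the relation $(\nu_0-n)(\nu_0+n-k)=0$ on each $\mu_0$-eigenspace of the lowest conformal weight space, and uses $k\notin\ZZ$ to conclude that $\nu_0$ is diagonalizable there and hence on all of $\widetilde{M}$. Without an argument of this kind (note it genuinely uses non-integrality of $k$ and the abelian structure of $\cC$), your proof of semisimplicity of $\cC_A^\loc$ does not go through.
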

\begin{proof}
   Using Theorem \ref{thm:Pi(0)-tens-cat-properties} and \cite[Theorem 5.5]{CKM2}, the Deligne tensor product $\cC^\Vir_k\boxtimes\cC^{\mathrm{wt}}_{\Pi(0)}$ is the rigid semisimple non-degenerate braided tensor category of $\Vir_{c_k}\otimes\Pi(0)$-modules with simple objects $M^k_{r,s}\otimes\Pi_{\ell}(\lambda)$ for $1\leq r\leq u-1$, $1\leq s\leq v-1$ and $\ell\in\ZZ$, $\lambda+\ZZ\in\CC/\ZZ$. The fusion rules of these simple modules follow from \eqref{eqn:Vir-fus-rules} and \eqref{eqn:P(0)-fusion-rules}.
   Moreover, each $M^k_{r,s}\otimes\Pi_\ell(\lambda)$ is an object of $\cC_A^\loc$, so $\cC^\Vir_k\boxtimes\cC^{\mathrm{wt}}_{\Pi(0)}$ is a subcategory of $\cC_A^\loc$. It is also a tensor subcategory since the braided tensor category structures on $\cC^\Vir_k\boxtimes\cC^{\mathrm{wt}}_{\Pi(0)}$ and $\cC_A^\loc$ are both the vertex algebraic one of \cite{HLZ8} (by \cite[Theorem 5.2]{CKM2} and \cite[Theorem 3.65]{CKM-ext}, respectively). 
   Thus it is enough to show that every object of $\cC_A^\loc$ is also in $\cC^\Vir_k\boxtimes\cC^{\mathrm{wt}}_{\Pi(0)}$, and for this we will show that $\cC_A^\loc$ is semisimple and has the same simple objects as $\cC^\Vir_k\boxtimes\cC^{\mathrm{wt}}_{\Pi(0)}$.

    If $M$ is any object of $\cC_A^\loc$, then because $\Vir_{c_k}$ is strongly rational, $M$ decomposes as a direct sum of $\Vir_{c_k}\otimes\Pi(0)$-submodules of the form $M^k_{r,s}\otimes W$, where $W=\Hom_{\Vir_{c_k}}(M^k_{r,s}, M)$ is a $\Pi(0)$-module. So if $M$ is simple, then only one direct summand occurs and $M\cong M^k_{r,s}\otimes W$ for some $1\leq r\leq u-1$, $1\leq s\leq v-1$. 
    We claim that $W$ is a simple $\Pi(0)$-module. Indeed, if $w\in W$ and $v\in M^k_{r,s}$ are any non-zero vectors, then the $\Vir_{c_k}\otimes\Pi(0)$-submodule of $M$ generated by $v\otimes w$ is $M^k_{r,s}\otimes\langle w\rangle$, where $\langle w\rangle$ is the $\Pi(0)$-submodule generated by $w$. Since $M$ is simple, we get $\langle w\rangle =W$, and thus $W$ is a simple $\Pi(0)$-module.

    To show that $W\cong\Pi_\ell(\lambda)$ for some $\ell,\lambda$, we need to show that $W$ is a weight $\Pi(0)$-module. Indeed, since $M\cong M^k_{r,s}\otimes W$ restricts to a weight $L_k(\mathfrak{sl}_2)$-module, and since the Heisenberg zero-mode $\mu_0$ is the same as $\frac{1}{2}h_0$ by Theorem \ref{thm: conformal embedding}, $\mu_0$ acts semisimply on $W$. Also, each $\mu_0$-eigenspace of $W$ has finite-dimensional conformal weight spaces and the conformal weights of each $\mu_0$-eigenspace of $W$ have a lower bound.
    Thus it is enough to show that the linearly independent Heisenberg zero-mode $c_0$ also acts semisimply on $W$.
     In fact, $c_0$ is a $\Pi(0)$-module endomorphism of $W$ since
    \begin{equation*}
        [c_0, Y(v,x)] = Y(c_0 v,x) = 0
    \end{equation*}
    for all $v\in\Pi(0)$ (because $c_0\Pi(0)=0$ due to $c_0 e^{nc} =\langle c,nc\rangle e^{nc}=0$ for all $n\in\ZZ$). Moreover, there exists a $c_0$-eigenvector in $W$ since $c_0$ preserves the finite-dimensional simultaneous $L_0$- and $\mu_0$-eigenspaces of $W$. Thus $c_0$ acts on $W$ by a scalar since $W$ is a simple $\Pi(0)$-module. This completes the proof that $W$ is an object of $\cC^{\mathrm{wt}}_{\Pi(0)}$ and thus $\cC_A^\loc$ has the same simple objects as $\cC^\Vir_k\boxtimes\cC^{\mathrm{wt}}_{\Pi(0)}$.

    To show that $\cC_A^\loc$ is semisimple, every object of $\cC_A^\loc$ has finite length because $\cC$ is locally finite (see for example the proof of \cite[Theorem 3.14]{CMSY}). Thus it is enough to show that any short exact sequence
    \begin{equation*}
        0\longrightarrow M^k_{r,s}\otimes\Pi_{\ell}(\lambda)\longrightarrow M\longrightarrow M^k_{r',s'}\otimes\Pi_{\ell'}(\lambda')\longrightarrow 0
    \end{equation*}
    in $\cC_A^\loc$ splits. Since $\Vir_{c_k}$ is strongly rational, it is enough to consider the case $(r,s)=(r',s')$. We may also assume $\ell=\ell'$ since otherwise the zero-mode $c_0$ acts semisimply with distinct eigenvalues $\ell$ and $\ell'$, and the corresponding $c_0$-eigenspaces are $A$-module direct summands isomorphic to $M^k_{r,s}\otimes\Pi_{\ell}(\lambda)$ and $M^k_{r,s}\otimes\Pi_{\ell'}(\lambda')$, respectively. Similarly, we may further assume $\lambda-\lambda'\in\ZZ$, since otherwise the sum of the $\mu_0$-eigenspaces of $M$ with eigenvalues contained in $\frac{k}{2}\ell+\lambda+\ZZ$ and the sum of the $\mu_0$-eigenspaces with eigenvalues contained in $\frac{k}{2}\ell+\lambda'+\ZZ$ would be direct summands  isomorphic to $M^k_{r,s}\otimes\Pi_{\ell}(\lambda)$ and $M^k_{r,s}\otimes\Pi_{\ell'}(\lambda')$, respectively. Thus we are reduced to showing that any self-extension
    \begin{equation}\label{eqn:self-ext}
        0\longrightarrow M^k_{r,s}\otimes\Pi_{\ell}(\lambda)\longrightarrow M\longrightarrow M^k_{r,s}\otimes\Pi_{\ell}(\lambda)\longrightarrow 0
    \end{equation}
    in $\cC_A$ splits.

    If \eqref{eqn:self-ext} fails to split, then tensoring with the simple current $M^k_{1,1}\otimes\Pi_{-\ell-1}(-\lambda)$ yields a non-split exact sequence
    \begin{equation}\label{eqn:self-ext-2}
        0\longrightarrow M^k_{r,s}\otimes\Pi_{-1}(0)\longrightarrow \widetilde{M}\longrightarrow M^k_{r,s}\otimes\Pi_{-1}(0)\longrightarrow 0
    \end{equation}
    where $\widetilde{M}=M\boxtimes_A(M^k_{1,1}\otimes\Pi_{-\ell-1}(-\lambda))$. Thus it is enough to show that any self-extension $\widetilde{M}$ of $M^k_{r,s}\otimes\Pi_{-1}(0)$ in $\cC_A^\loc$ splits. We first note that $\widetilde{M}\cong M^k_{r,s}\otimes\widetilde{W}$ as an $A$-module, where $\widetilde{W}=\Hom_{\Vir_{c_k}}(M^k_{r,s},\widetilde{M})$. Then because $M^k_{r,s}$ is simple, \eqref{eqn:self-ext-2} is the tensor product of $M^k_{r,s}$ with an exact sequence
    \begin{equation*}
        0\longrightarrow \Pi_{-1}(0)\longrightarrow\widetilde{W}\longrightarrow\Pi_{-1}(0)\longrightarrow 0
    \end{equation*}
    of $\Pi(0)$-modules. Thus since $\cC^{\mathrm{wt}}_{\Pi(0)}$ is semisimple by Theorem \ref{thm:Pi(0)-tens-cat-properties}, it is enough to show that $\widetilde{W}$ is a finitely-generated weight $\Pi(0)$-module. Certainly $\widetilde{W}$ is finitely generated since it has length $2$, so we just need to show that the Heisenberg zero-modes $\mu_0$ and $\nu_0$ act semisimply, where as before $\lbrace\mu,\nu\rbrace$ is the basis of $\CC\otimes_\ZZ L$ such that $c=\frac{2}{k}(\mu-\nu)$ and $d=\mu+\nu$. 

    Since $\cG(\widetilde{M})$ is a weight $L_k(\sl_2)$-module, $\mu_0=\frac{1}{2}h_0$ acts semisimply on $\widetilde{W}$. Moreover, by either \eqref{eqn:FF-affine-sl2-correspondence} or \eqref{eqn:conf-wt-Pi(0)}, $\cG(\widetilde{M})$ is lower bounded with lowest conformal weight $h_{r,s}+\frac{k}{4}=\Delta_{r,s}$. By the classification of finitely-generated lower-bounded weight $L_k(\sl_2)$-modules derived in \cite{ACK} and reviewed in Section \ref{subsec:affine-sl2-wt-mods}, the Virasoro $L_0$ operator acts semisimply on $\widetilde{M}$. Writing the conformal vector of $\Pi(0)$ in Theorem \ref{thm: conformal embedding} in terms of $\mu$ and $\nu$, $L_0$ acts on the lowest conformal weight space of $\widetilde{M}$ by 
    \begin{equation*}
        h_{r,s} +\frac{1}{k}(\mu_0^2-\nu_0^2)+\nu_0 =\Delta_{r,s},
    \end{equation*}
    equivalently
    \begin{equation*}
        \nu_0^2-k\nu_0 =\mu_0^2-\frac{k^2}{4}.
    \end{equation*}
    Since $\mu_0^2$ acts on the lowest conformal weight space of $\widetilde{M}$ semisimply with eigenvalues $n-\frac{k}{2}$ for $n\in\ZZ$, the action of $\nu_0$ on a $\mu_0$-eigenvector for such an eigenvalue satisfies
    \begin{equation*}
        (\nu_0-n)(\nu_0+n-k) =0.
    \end{equation*}
    Since $k\notin\ZZ$, $\nu_0$ has distinct eigenvalues on each $\mu_0$-eigenspace of the lowest conformal weight space of $\widetilde{M}$. Thus $\nu_0$ is diagonalizable on the lowest conformal weight space of $\widetilde{M}$, and since $\widetilde{M}$ is generated as an $A$-module by its lowest conformal weight space, $\nu_0$ acts semisimply on $\widetilde{M}$. In particular, $\widetilde{W}$ is a weight $\Pi(0)$-module and is therefore semisimple. This completes the proof that $\cC_A^\loc$ is semisimple.
\end{proof}

We can now use Theorem \ref{thm:CAloc-rigid} and Corollary \ref{cor:condition3}, to check the third condition of Theorem \ref{rigidity-of-C-from-CAloc} for the case of $\cC=\cC^{\mathrm{wt}}_k(\sl_2)$ and $A=\Vir_{c_k}\otimes\Pi(0)$:
\begin{cor}\label{cor:condition-3-for-sl2}
 If $k$ is a non-integral admissible level, then condition (3) of Theorem \ref{rigidity-of-C-from-CAloc} holds for $\cC=\cC^{\mathrm{wt}}_k(\sl_2)$ and $A=\Vir_{c_k}\otimes\Pi(0)$.
\end{cor}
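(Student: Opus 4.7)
The plan is to reduce the statement to the machinery of Corollary \ref{cor:condition3}. That corollary replaces condition (3) of Theorem \ref{rigidity-of-C-from-CAloc} by two sub-tasks: (i) verify Assumption \ref{ass1} for $V = L_k(\sl_2)$, $A = \Vir_{c_k}\otimes\Pi(0)$, and $\cC = \cC^{\mathrm{wt}}_k(\sl_2)$; and (ii) produce a non-split short exact sequence $0 \to V \to A \to Q \to 0$ in $\cC$ with $Q$ simple and not isomorphic to $V$.

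Task (ii) is immediate from the short exact sequence \eqref{eqn:A-as-Lk(sl2)-mod}, namely $0 \to \cL_{1,0} \to \cG(A) \to \sigma(\cD_{1,1}^+) \to 0$, which is non-split by the discussion preceding Section \ref{sec:rigidity}; the quotient $Q := \sigma(\cD_{1,1}^+)$ is a spectral flow of a simple module and is manifestly not isomorphic to $V = \cL_{1,0}$.

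For task (i), I will take $\tau$ to be the injective map $\mathrm{Irr}(\cC) \to \mathrm{Irr}(\cC_A^\loc)$ already constructed in Remark \ref{rem:onetone}; the embeddings $X \hookrightarrow \cG(\tau(X))$ demanded by Assumption \ref{ass1} are built into that construction. The only remaining point is the M\"{u}ger-centrality clause: that $\tau(X)$ being M\"{u}ger central in $\cC_A^\loc$ should force $X \cong V$. By Theorem \ref{thm:CAloc-rigid}, $\cC_A^\loc$ is non-degenerate, so its only M\"{u}ger central simple object is the monoidal unit $A = M_{1,1}^k \otimes \Pi_0(0)$. Combined with the injectivity of $\tau$, the condition reduces to the single verification that $\tau(V) \cong A$.

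This final identification is a short calculation using formulas already on hand. Using \eqref{eqn:HW-as-SF-of-LW}, write $V = \cL_{1,0} \cong \sigma(\cD_{u-1,v-1}^-) \cong \sigma^{-1}(\cD_{u-1,v-1}^+)$. The $s = v-1$ clause of the formula for $\tau$ in Remark \ref{rem:onetone}, applied with $r = u-1$ and $\ell = -1$, then gives $\tau(V) = M_{1,1}^k \otimes \Pi_0(\nu_{1,1}) = M_{1,1}^k \otimes \Pi_0(0) = A$, since $\nu_{1,1} = 0$. Every ingredient is already in place, so I do not anticipate any genuine obstacle; the corollary is pure bookkeeping between Corollary \ref{cor:condition3}, Remark \ref{rem:onetone}, and the non-degeneracy half of Theorem \ref{thm:CAloc-rigid}.
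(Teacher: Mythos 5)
Your proposal is correct and follows essentially the same route as the paper: verify Assumption \ref{ass1} via the embeddings of Remark \ref{rem:onetone} together with the non-degeneracy of $\cC_A^\loc$ from Theorem \ref{thm:CAloc-rigid}, and then invoke Corollary \ref{cor:condition3} with the non-split sequence \eqref{eqn:A-as-Lk(sl2)-mod}. The only difference is that you make explicit the computation $\tau(\cL_{1,0})=\tau(\sigma^{-1}(\cD_{u-1,v-1}^+))=M_{1,1}^k\otimes\Pi_0(\nu_{1,1})=A$, which the paper leaves implicit in its assertion that $\tau(X)\cong A$ if and only if $X\cong L_k(\sl_2)$.
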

\begin{proof}
     By Remark \ref{rem:onetone}, each simple object $X$ of $\cC$ embeds into a simple object $\tau(X)$ of $\cC_A^\loc$, such that $\tau(X)\cong A$ if and only if $X\cong L_k(\sl_2)$. 
    Then since $\cC_A^\loc$ is non-degenerate by Theorem \ref{thm:CAloc-rigid}, $\tau(X)$ is M\"{u}ger central in $\cC_A^\loc$ only if $X\cong L_k(\sl_2)$. Thus Assumption \ref{ass1} holds  in this setting. Now because $A$ also satisfies the non-split exact sequence \eqref{eqn:A-as-Lk(sl2)-mod}, Corollary \ref{cor:condition3} implies that condition (3) of Theorem \ref{rigidity-of-C-from-CAloc} holds .
\end{proof}

\section{Rigidity of the tensor category of affine \texorpdfstring{$\mathfrak{sl}_2$}{sl2} weight modules}\label{sec:rigidity}

Fix a non-integral admissible level $k=-2+t=-2+\frac{u}{v}$ for $\widehat{\mathfrak{sl}}_2$. In this section, we show that the category $\cC^{\mathrm{wt}}_k(\sl_2)$ of finitely-generated weight $L_k(\sl_2)$-modules is rigid, proving Theorem \ref{thm:intro-main-thm}.
 First, we compute the fusion tensor product of $\cD_{1,1}^+$ with any lowest-weight module $\cD_{r,s}^-$, which will help with computing the induced module $\cF(A)$ in the following subsections. Then in Section \ref{subsec:v=2}, we prove $\cC^{\mathrm{wt}}_k(\sl_2)$ is rigid in the case $v=2$, at the same time illustrating the general strategy that we will then apply to the technically more involved $v\geq 3$ case in Section \ref{subsec:v>2}.

\subsection{Fusion tensor product of \texorpdfstring{$\cD_{1,1}^+$}{D11+} and \texorpdfstring{$\cD_{r,s}^-$}{Drs-}}\label{sec:fusion}

To show that $\cC^{\mathrm{wt}}_k(\mathfrak{sl}_2)$ is rigid using the extension $L_k(\mathfrak{sl}_2)\hookrightarrow \Vir_{c_k}\otimes\Pi(0)$ and Theorem \ref{rigidity-of-C-from-CAloc}, we will need to compute the fusion tensor product of $\cG(\Vir_{c_k}\otimes\Pi(0))$ with itself in $\cC^{\mathrm{wt}}_k(\mathfrak{sl}_2)$. By \eqref{eqn:A-as-Lk(sl2)-mod}, this largely amounts to computing $\sigma(\cD_{1,1}^+)\boxtimes\sigma(\cD_{1,1}^+)$, which by \eqref{eqn:HW-as-SF-of-LW} and Proposition \ref{prop:tens-prod-and-spec-flow} amounts to computing the fusion product of $\cD_{1,1}^+$ with some $\cD_{r,s}^-$. In fact, it is little extra effort to compute $\cD_{1,1}^+\boxtimes\cD_{r,s}^-$ for all integers $1\leq r\leq u-1$ and $1\leq s\leq v-1$.

First we show that the fusion tensor product of a highest-weight module with a lowest-weight module in $\cC^{\mathrm{wt}}_k(\sl_2)$ is lower bounded, or more generally:
\begin{prop}\label{prop:Drs+_times_Drs-_lower_bounded}
For $0\leq r,r'\leq u-1$ and $1\leq s,s'\leq v-1$, if $\cY$ is a surjective intertwining operator of type $\binom{X}{\cD_{r,s}^+\,\cD_{r',s'}^-}$ for some finitely-generated generalized $L_k(\mathfrak{sl}_2)$-module $X$, then $X$ is lower bounded. 
\end{prop}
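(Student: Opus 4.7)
The plan is as follows. Since $\cY$ is surjective, every vector of $X$ is a finite linear combination of coefficients of $\cY(m_1, x) m_2$ for $m_1 \in \cD_{r,s}^+$ and $m_2 \in \cD_{r',s'}^-$. Let $v \in \cD_{r,s}^+$ be the highest-weight generator and $w \in \cD_{r',s'}^-$ the lowest-weight generator (of conformal weights $\Delta_{r,s}$ and $\Delta_{r',s'}$ respectively), and let $T^+, T^-$ denote their top conformal weight subspaces (the irreducible $\mathfrak{sl}_2$-modules $D^+_{\lambda_{r,s}}$ and $D^-_{-\lambda_{r',s'}}$). My strategy is to reduce the spanning statement to coefficients of $\cY(T^+, x) T^-$, show these have uniformly bounded-below conformal weights, and then observe that the action of the subalgebra $U(\widehat{\mathfrak{sl}}_2)^-$ generated by $a_n$ with $n < 0$ only raises conformal weight.

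For the reduction step, I apply the commutator formula \eqref{eqn:intw_op_comm}: if $m_2 = a_n m_2'$ with $n < 0$, then
\[
\cY(m_1, x) m_2 = a_n \cY(m_1, x) m_2' - \sum_{i \geq 0} \binom{n}{i} x^{n-i} \cY(a_i m_1, x) m_2',
\]
and because $\cD_{r,s}^+$ is lower bounded by $\Delta_{r,s}$, one has $a_i m_1 = 0$ for $i$ sufficiently large, so the sum is finite and the error terms have strictly smaller conformal weight in $m_1$. Symmetrically, the associator formula \eqref{eqn:intw_op_it} eliminates a negative mode on $m_1$, at the cost of terms $\cY(m_1', x) a_i m_2'$ with $a_i m_2'$ of strictly smaller conformal weight in $\cD_{r',s'}^-$. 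A double induction on the depths of $m_1$ and $m_2$ above the respective lower bounds $\Delta_{r,s}$, $\Delta_{r',s'}$ then shows every coefficient of $\cY(m_1, x) m_2$ lies in $U(\widehat{\mathfrak{sl}}_2)^- \cdot \mathrm{Span}\{\text{coefficients of } \cY(m_1', x) m_2' : m_1' \in T^+, m_2' \in T^-\}$.

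For the uniform bound, write $\cY(v, x) w = \sum_{n, k} x^{-n-1}(\log x)^k \cY(v)_{n; k} w$; the lower truncation axiom for intertwining operators yields $N_0 \in \RR$ with $\cY(v)_{n; k} w = 0$ for $\mathrm{Re}(n) > N_0$, placing the conformal weights appearing in $\cY(v, x) w$ at or above $\Delta_0 := \Delta_{r,s} + \Delta_{r',s'} - N_0 - 1$. Specialized to $\mathfrak{sl}_2$-zero modes $a_0$, the formulas \eqref{eqn:intw_op_it} and \eqref{eqn:intw_op_comm} reduce to
\[
[a_0, \cY(m_1')_{n;k}] = \cY(a_0 m_1')_{n;k}, \qquad \cY(m_1')_{n;k}(a_0 m_2') = a_0 \cY(m_1')_{n;k} m_2' - \cY(a_0 m_1')_{n;k} m_2'.
\]
Since $e_0 v = 0$ and $f_0 w = 0$, and since zero-modes preserve conformal-weight grading, induction on the $\mathfrak{sl}_2$-generation of $m_1'$ from $v$ and $m_2'$ from $w$ yields $\cY(m_1')_{n; k} m_2' = 0$ for $\mathrm{Re}(n) > N_0$ uniformly in $(m_1', m_2') \in T^+ \times T^-$. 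Thus all coefficients of $\cY(T^+, x) T^-$ lie in conformal weight $\geq \Delta_0$, and since $U(\widehat{\mathfrak{sl}}_2)^-$ shifts conformal weights up, Step~1 implies $X$ is lower bounded by $\Delta_0$.

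The main obstacle is the book-keeping in the reduction step: the associator and commutator formulas produce terms where $m_1$ and $m_2$ are simultaneously modified, with an $a_i$ moving from $m_1$ onto $m_2$ (or vice versa), and these modifications are not obviously ``smaller'' in a single direction. Termination of the induction relies crucially on both $\cD_{r,s}^+$ and $\cD_{r',s'}^-$ being lower bounded as generalized modules themselves, so that the combined depth is a well-founded induction parameter. Step~2 is cleaner, but one must verify that the auxiliary $\log$-power index $k$ is similarly bounded uniformly, which follows from the same commutator/associator argument with zero-modes applied component-wise to the $\log x$ expansion.
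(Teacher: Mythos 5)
Your overall strategy is the same as the paper's: use the associator and commutator formulas \eqref{eqn:intw_op_it}, \eqref{eqn:intw_op_comm} to reduce the spanning set to coefficients of $\cY$ applied to the lowest conformal weight spaces, handle the zero modes using $e_0 v=0$ and $f_0 w=0$, and conclude because negative modes weakly raise conformal weights. (The paper pushes the reduction one step further, to the single pair $(v_{r,s},v_{r',s'}')$ generated over $(\widehat{\sl}_2)_{\leq 0}$, whereas you keep all of $T^+\times T^-$ and propagate a vanishing bound by the zero-mode recursion; these are equivalent.) Your Step 1 book-keeping is sound: the combined depth strictly decreases in every term, including the $i=0$ terms, because the depth of the other argument drops.

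However, there is a genuine gap in Step 2. You assert that ``the lower truncation axiom for intertwining operators yields $N_0\in\RR$ with $\cY(v)_{n;k}w=0$ for $\mathrm{Re}(n)>N_0$.'' This is not what the axiom gives. The lower truncation condition in \cite[Definition 3.10]{HLZ2} is only per congruence class: for each $c\in\CC$, one has $\cY(v)_{c+m;k}w=0$ for $m\in\ZZ$ sufficiently large. A priori the exponents occurring in $\cY(v,x)w$ may lie in infinitely many cosets of $\CC/\ZZ$, with the per-coset minimal conformal weights unbounded below, in which case your $\Delta_0$ does not exist and $X$ is genuinely not lower bounded. The missing ingredient is precisely the hypothesis you never use: $X$ is \emph{finitely generated}. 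Since $X$ is spanned by the coefficients of $\cY$ together with mode actions that shift conformal weights by integers, finite generation forces the conformal weights of $X$ to lie in finitely many $\ZZ$-cosets; within each such coset the lower truncation condition, combined with your (correct) observation that everything is obtained from coefficients of $\cY(T^+,x)T^-$ by weight-raising operators, gives a lower bound, and one takes the minimum over the finitely many cosets. This is exactly how the paper closes the argument (``As $X$ is finitely generated, only finitely many such minimal conformal weights are possible''), and your proof is incomplete without it.
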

\begin{proof}
Since $\cY$ is surjective, $X$ is spanned by coefficients of $x^h(\log x)^0$ in $\cY(w,x)w'$ for $h\in\CC$, $w\in\cD_{r,s}^+$, and $w'\in\cD_{r',s'}^-$ (see for example \cite[Lemma 2.2]{MY-cp1-Vir}). Then using first the associator formula \eqref{eqn:intw_op_it} and then the commutator formula \eqref{eqn:intw_op_comm},
 one can show that $X$ is generated as a module for $(\widehat{\mathfrak{sl}}_2)_-:=\sl_2\otimes t^{-1}\CC[t^{-1}]$ by coefficients of $x^h(\log x)^0$ in $\cY(u,x)u'$, where $h\in\CC$ and $u$, $u'$ range over the lowest conformal weight spaces of $\cD_{r,s}^+$, $\cD_{r',s'}^-$, respectively. That is, if $v_{r,s}\in\cD_{r,s}^+$ is a highest-weight vector and $v_{r',s'}'\in\cD_{r',s'}^-$ is a lowest-weight vector, then $X$ is generated as an $(\widehat{\mathfrak{sl}}_2)_-$-module by powers of $x^h(\log x)^0$ in 
\begin{equation}
\cY(f^m. v_{r,s}, x)e^n. v_{r',s'}'
\end{equation}
for $m,n\in\ZZ_{\geq 0}$.

Now since $e^n f^m. v_{r,s}$ is a multiple of $f^{m-n}. v_{r,s}$ if $n\leq m$ and is $0$ if $n>m$, the $n=0$ case of \eqref{eqn:intw_op_comm} implies that $X$ is generated as a module for $(\widehat{\mathfrak{sl}}_2)_{\leq 0}:=\sl_2\otimes\CC[t^{-1}]\oplus\CC\mathbf{k}$ by powers of $x^h(\log x)^0$ in 
\begin{equation}
\cY(f^m . v_{r,s},x)v_{r',s'}'
\end{equation}
for $m\in\ZZ_{\geq 0}$. Finally, we have 
\begin{equation*}
\cY(f^m. v_{r,s},x)v_{r',s'}' = f_0\cY(f^{m-1}. v_{r,s},x)v_{r',s'}' - \cY(f^{m-1}. v_{r,s},x)f. v_{r',s'}' = f_0\cY(f^{m-1}. v_{r,s},x)v_{r',s'}',
\end{equation*}
so by induction on $m$, $X$ is generated as an $(\widehat{\mathfrak{sl}}_2)_{\leq 0}$-module by powers of $x^h(\log x)^0$ in $\cY(v_{r,s}, x)v_{r',s'}$.

As affine Lie algebra operators in $(\widehat{\mathfrak{sl}}_2)_{\leq 0}$ weakly raise conformal weights, the conformal weights of the coefficients of minimal powers of $x$ in $\cY(v_{r,s}, x)v_{r',s'}$ are minimal conformal weights of $X$, where a conformal weight $h$ of $X$ is minimal if $h-n$ is not a conformal weight of $x$ for any $n\in\ZZ_{\geq 1}$. As $X$ is finitely generated, only finitely many such minimal conformal weights are possible, so $X$ is lower bounded.
\end{proof}

We now consider $\cD_{1,1}^+=\cD_{-t}^+=\cD_{-k-2}^+$, and we let $v_{1,1}$ denote a highest-weight vector in $\cD_{1,1}^+$. A straightforward calculation shows that
\begin{equation}\label{eqn:singular_vector}
s_{1,1}= e_{-1}f_0^2 v_{1,1}-(t+1)h_{-1}f_0 v_{1,1} -t(t+1)f_{-1} v_{1,1}
\end{equation}
is a singular vector that vanishes in $\cD_{1,1}^+$. This singular vector places strong constraints on the fusion product of $\cD_{1,1}^+$ with any $\cD_{r,s}^-$. In fact, we have:
\begin{prop}\label{prop:D11+_Drs-_lowest_weights}
For $1\leq r\leq u-1$ and $1\leq s\leq v-1$, let $\cY$ be a surjective intertwining operator of type $\binom{X}{\cD_{1,1}^+\,\cD_{r,s}^-}$ for some indecomposable (necessarily lower-bounded) generalized $L_k(\mathfrak{sl}_2)$-module $X$. Then the lowest conformal weight of $X$ is either $\Delta_{r,s+1}$ or $\Delta_{r,s-1}$.
\end{prop}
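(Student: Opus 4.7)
The plan is to distill from the vanishing singular vector $\cY(s_{1,1},x)v_{r,s}'=0$ a single algebraic constraint on the ``top'' coefficient of $\cY(v_{1,1},x)v_{r,s}'$, and then to combine that constraint with the Segal--Sugawara expression of $L_0$ in terms of the $\mathfrak{sl}_2$-Casimir to pin the lowest conformal weight of $X$ into $\{\Delta_{r,s-1},\Delta_{r,s+1}\}$.

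First I would set $A(x):=\cY(v_{1,1},x)v_{r,s}'$, let $\alpha$ be the minimum exponent of $x$ appearing in $A(x)$ across all $\log x$-powers, let $K$ be the largest power of $\log x$ occurring at $x^\alpha$, and put $a$ equal to the coefficient of $x^\alpha(\log x)^K$. Lemma~\ref{lem:nonzerofusion} supplies $a\neq 0$, and the identity $[L_0,\cY(v_{1,1},x)]=x\partial_x\cY(v_{1,1},x)+\Delta_{1,1}\cY(v_{1,1},x)$ together with maximality of $K$ makes $a$ an honest $L_0$-eigenvector of eigenvalue $h:=\Delta_{1,1}+\Delta_{r,s}+\alpha$, which is the lowest conformal weight of $X$. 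The $h_0$-weight of $a$ is $-\mu$ where $\mu:=\lambda_{r,s-1}=t+\lambda_{r,s}$.

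Next I would expand $\cY(s_{1,1},x)v_{r,s}'=0$ using \eqref{eqn:intw_op_it} and \eqref{eqn:intw_op_comm}, together with $f_0 v_{r,s}'=0$, $e_i v_{r,s}'=f_i v_{r,s}'=h_i v_{r,s}'=0$ for $i\geq 1$, and $h_0 v_{r,s}'=-\lambda_{r,s}v_{r,s}'$. The commutator formula yields $\cY(f_0^j v_{1,1},x)v_{r,s}'=f_0^j A(x)$, and the $\mathfrak{sl}_2$-computation $e_0 f_0^2 v_{1,1}=-2(t+1)f_0 v_{1,1}$ collects every negative power of $x$ in the expansion into a single term $x^{-1}\bigl[e_0 f_0^2 A(x)+(t+1)(\lambda_{r,s}+2)f_0 A(x)\bigr]$, while the remaining contributions carry only non-negative powers of $x$. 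Extracting the coefficient of $x^{\alpha-1}(\log x)^K$ and using the $\mathfrak{sl}_2$-identity $[e,f^2]=2f(h-1)$ reduces to $f_0\bigl(f_0 e_0+(t-1)\lambda_{r,s}\bigr)a=0$. Rewriting $f_0 e_0$ through the Casimir $C_0=\tfrac{1}{2}h_0^2+2f_0 e_0+h_0$ and using that $C_0$ is central in $U(\mathfrak{sl}_2)\subset U(\widehat{\mathfrak{sl}}_2)$ turns this into $(C_0-c_1)f_0 a=0$ for the explicit scalar $c_1=\tfrac{1}{2}\mu^2-\mu-2(t-1)\lambda_{r,s}$.

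The Segal--Sugawara formula reads $L_0=\tfrac{1}{2t}C_0+(\text{positive-mode terms})$, and because positive affine modes annihilate every element of $X_{[h]}$, this implies $L_0=\tfrac{1}{2t}C_0$ on $X_{[h]}$; in particular $C_0$ acts on $X_{[h]}$ with generalized eigenvalue $2th$. I would then finish by a two-case analysis. If $f_0 a\neq 0$, then $f_0 a\in X_{[h]}$ is simultaneously an honest $C_0$-eigenvector of eigenvalue $c_1$ and a generalized $C_0$-eigenvector of generalized eigenvalue $2th$, forcing $c_1=2th$; the substitution $\mu=t+\lambda_{r,s}$ then yields $h=c_1/(2t)=\Delta_{r,s+1}$. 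If $f_0 a=0$, then $a$ is $\mathfrak{sl}_2$-lowest-weight, so $C_0 a=\tfrac{1}{2}\mu(\mu+2)\,a$ honestly, and matching with $2th$ gives $h=\mu(\mu+2)/(4t)=\Delta_{r,s-1}$. The main technical nuisance will be the bookkeeping of logarithmic terms: the constraint extracted above must be read at the top $\log$-power of the minimum $x$-power so that $a$ is an honest (rather than merely generalized) $L_0$-eigenvector and the $C_0$ eigenvalue-matching step works as stated.
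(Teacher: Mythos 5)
Your proposal is correct and takes essentially the same route as the paper: both extract from $\cY(s_{1,1},x)v_{r,s}'=0$ a quadratic zero-mode constraint on the bottom coefficient of $\cY(v_{1,1},x)v_{r,s}'$ and convert it via the Sugawara relation $L_0=\tfrac{1}{2t}C_0$ on the lowest conformal weight space into the two admissible Casimir eigenvalues, which are exactly $2t\Delta_{r,s\pm1}$. The only differences are organizational: you work with the top $\log x$-coefficient at the minimal $x$-power (an honest $L_0$-eigenvector) and split on $f_0a=0$ versus $f_0a\neq 0$, while the paper uses the $(\log x)^0$-coefficient as a generalized eigenvector and phrases the dichotomy as the two possible eigenvalues of $e_0f_0$ --- your relation $f_0\bigl(f_0e_0+(t-1)\lambda_{r,s}\bigr)a=0$ is the paper's relation $(e_0f_0)^2u=(\lambda_{1,1}-t\lambda_{r,s})\,e_0f_0u$ before applying $e_0$.
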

\begin{proof}
Since $X$ is indecomposable and lower bounded, its conformal weights are contained in $\Delta+\ZZ_{\geq 0}$ for some $\Delta\in\CC$. Thus $X=\bigoplus_{n=0}^\infty X(n)$ where $X(n)=X_{[\Delta+n]}$. We use $\pi_0$ to denote the projection from $X$ to its lowest weight space. Thus for homogeneous $w\in\cD_{1,1}^+$ and $w'\in\cD_{r,s}^-$, $\pi_0\cY(w,1)w'$ denotes the coefficient of $x^{h}(\log x)^0$ in $\cY(w,x)w'$ for $h$ such that the conformal weight of the coefficient of $x^{h}$ is $\Delta$. Note that $x^{h}$ will in fact be the minimal power of $x$ in $\cY(w,x)w'$. 

Let $v_{r,s}'\in\cD_{r,s}^-$ be a lowest-weight vector. The proof of Proposition \ref{prop:Drs+_times_Drs-_lower_bounded} shows that the conformal weight $\Delta$ subspace of $X$ is generated as an $\sl_2$-module by the coefficient of $x^h(\log x)^0$ in $\cY(v_{1,1},x)v_{r,s}'$, where $x^h$ is the lowest power of $x$ appearing in $\cY(v_{1,1},x)v_{r,s}'$.
In particular $\pi_0\cY(v_{1,1},1)v_{r,s}'$ is non-zero if $X$ is non-zero, so we just need to find the conformal weight of this vector. Using the associator formula \eqref{eqn:intw_op_it}, we get
\begin{align}\label{eqn:sing_vector_calc1}
0 & = \cY(s_{1,1},x)v_{r,s}'\nonumber\\
& = \sum_{i\geq 0} x^i\left( e_{-i-1}\cY(f_0^2 v_{1,1}, x)v_{r,s}'-(t+1)h_{-i-1} \cY(f_0 v_{1,1}, x)v_{r,s}'-t(t+1) f_{-i-1}\cY(v_{1,1},x)v_{r,s}'\right)\nonumber\\
& \qquad +x^{-1}\left( \cY(f_0^2 v_{1,1}, x)e_0 v_{r,s}'-(t+1) \cY(f_0 v_{1,1}, x)h_0 v_{r,s}'-t(t+1)\cY(v_{1,1},x)f_0 v_{r,s}'\right).
\end{align}
Setting $x=1$ and projecting to $X(0)$, this implies
\begin{equation}\label{eqn:pi0_sing_reln}
\pi_0\cY(f_0^2 v_{1,1},1)e_0 v_{r,s}' = -\lambda_{r,s}(t+1)\pi_0\cY(f_0 v_{1,1},1)v_{r,s}'.
\end{equation}
We will use this relation to constrain the action of the Casimir operator
\begin{equation}
C=e_0f_0+\frac{1}{2}h_0^2+f_0e_0 = 2e_0f_0+\frac{1}{2}h_0(h_0-2)
\end{equation}
on $\pi_0\cY(v_{1,1},1)v_{r,s}'$, which is a generalized eigenvector for $C$ with generalized eigenvalue $\underline{C}=2t\Delta$.

Now, $h_0$ acts on $\pi_0(\cY(v_{1,1},1)v_{r,s}'$ by the scalar $\lambda_{1,1}-\lambda_{r,s}$. Thus $\pi_0\cY(v_{1,1},1)v_{r,s}'$ is a generalized eigenvector for $e_0f_0$. Using the $n=0$ case of the commutator formula \eqref{eqn:intw_op_comm}, we have
\begin{align*}
e_0f_0\cdot\pi_0\cY(v_{1,1},1)v_{r,s}' & = e_0\cdot\pi_0\cY(f_0 v_{1,1},1)v_{r,s}'\\
& =\pi_0\cY(e_0f_0 v_{1,1},1)v_{r,s}' +\pi_0\cY(f_0 v_{1,1},1)e_0 v_{r,s}'\\
& =\lambda_{1,1}\pi_0\cY(v_{1,1},1)v_{r,s}' +\pi_0\cY(f_0 v_{1,1},1)e_0 v_{r,s}'.
\end{align*}
Thus applying $e_0f_0$ again and using \eqref{eqn:pi0_sing_reln} yields
\begin{align*}
(e_0f_0)^2\cdot\pi_0\cY(v_{1,1},1)v_{r,s}' & =\lambda_{1,1}e_0f_0\cdot\pi_0\cY(v_{1,1},1)v_{r,s}'+ e_0\cdot\pi_0\cY(f_0^2 v_{1,1},1)e_0 v_{r,s}'\\
&\qquad+e_0\cdot\pi_0\cY(f_0 v_{1,1},1)f_0e_0 v_{r,s}'\\
& =\lambda_{1,1}e_0f_0\cdot\pi_0\cY(v_{1,1},1)v_{r,s}'-\lambda_{r,s}(t+1)e_0\cdot\pi_0\cY(f_0 v_{1,1},1)v_{r,s}'\\
&\qquad +\lambda_{r,s}e_0\cdot\pi_0\cY(f_0 v_{1,1},1)v_{r,s}'\\
& =\lambda_{1,1}e_0f_0\cdot\pi_0\cY(v_{1,1},1)v_{r,s}' -t\lambda_{r,s}e_0\cdot\pi_0\cY(f_0 v_{1,1},1)v_{r,s}'\\
& =(\lambda_{1,1}-t\lambda_{r,s})e_0f_0\cdot\pi_0\cY(v_{1,1},1)v_{r,s}'.
\end{align*}
Thus the generalized eigenvalue of $e_0f_0$ on $\pi_0\cY(v_{1,1},1)v_{r,s}'$ is either $0$ or $\lambda_{1,1}-t\lambda_{r,s}$. Also, since $\lambda_{1,1}-t\lambda_{r,s}=-t(r-ts)\neq 0$ as $1\leq s\leq v-1$, actually $\pi_0\cY(v_{1,1},1)v_{r,s}'$ is an $e_0f_0$-eigenvector.

It now follows that the lowest conformal weight $\Delta$ of $X$ is given by either
\begin{equation*}
\Delta =\frac{1}{4t}(\lambda_{1,1}-\lambda_{r,s})(\lambda_{1,1}-\lambda_{r,s}-2)
\end{equation*}
or
\begin{equation*}
\Delta =\frac{1}{4t}(4(\lambda_{1,1}-t\lambda_{r,s})+(\lambda_{1,1}-\lambda_{r,s})(\lambda_{1,1}-\lambda_{r,s}-2)).
\end{equation*}
A calculation shows that the first of these conformal weights is $\Delta_{r,s-1}$, and the second is $\Delta_{r,s+1}$.
\end{proof}

Note from the above argument that
\begin{equation}\label{eqn:conf_wt_diff}
\Delta_{r,s+1}-\Delta_{r,s-1} =\frac{1}{t}(\lambda_{1,1}-t\lambda_{r,s}) = -r+ts,
\end{equation}
which is never an integer if $1\leq s\leq v-1$. We use this to prove:
\begin{prop}\label{prop:X+X-}
If $1\leq r\leq u-1$ and $1\leq s\leq v-1$, then $\cD_{1,1}^+\boxtimes\cD_{r,s}^-\cong X_+\oplus X_-$, where $X_{\pm}$ is either $0$ or a lower-bounded generalized module of lowest conformal weight $\Delta_{r,s\pm 1}$. For either sign choice, if $X_\pm\neq 0$, then it is generated by a single $L_0$-eigenvector of conformal weight $\Delta_{r,s\pm1}$ and $h_0$-weight $-\lambda_{r,s-1}$.
\end{prop}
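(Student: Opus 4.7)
The plan is to decompose $W:=\cD_{1,1}^+\boxtimes\cD_{r,s}^-$ into two summands, one for each lowest conformal weight allowed by Proposition \ref{prop:D11+_Drs-_lowest_weights}, and then to exhibit each summand as cyclically generated by an explicit $L_0$-eigenvector extracted from the tensor product intertwining operator $\cY_W$.

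First, $W$ is lower bounded by Proposition \ref{prop:Drs+_times_Drs-_lower_bounded}, and since $\cC^{\mathrm{wt}}_k(\sl_2)$ is locally finite abelian, $W$ has finite length and admits a Krull--Schmidt decomposition $W=\bigoplus_i Y_i$ into indecomposables. Each $Y_i$ is a quotient of $W$ via projection, so $\pi_{Y_i}\circ\cY_W$ is a non-zero surjective intertwining operator of type $\binom{Y_i}{\cD_{1,1}^+\,\cD_{r,s}^-}$, and Proposition \ref{prop:D11+_Drs-_lowest_weights} then forces the lowest conformal weight of $Y_i$ to be either $\Delta_{r,s+1}$ or $\Delta_{r,s-1}$. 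Let $X_\pm$ be the sum of those $Y_i$ with lowest conformal weight $\Delta_{r,s\pm 1}$, so that $W=X_+\oplus X_-$. By \eqref{eqn:conf_wt_diff}, $\Delta_{r,s+1}-\Delta_{r,s-1}=-r+ts\notin\ZZ$, so the conformal weight spectra of $X_+$ and $X_-$ lie in distinct cosets of $\ZZ$ in $\CC$.

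Now fix a sign, assume $X_\pm\neq 0$, and set $\cY_\pm:=\pi_\pm\circ\cY_W$, a non-zero surjective intertwining operator of type $\binom{X_\pm}{\cD_{1,1}^+\,\cD_{r,s}^-}$. Let $w_\pm$ be the coefficient of $x^{h_\pm}(\log x)^0$ in $\cY_\pm(v_{1,1},x)v_{r,s}'$, where $h_\pm=\Delta_{r,s\pm 1}-\Delta_{1,1}-\Delta_{r,s}$ is the minimal exponent of $x$ occurring in this series. The proof of Proposition \ref{prop:D11+_Drs-_lowest_weights}, applied to $X_\pm$ in place of $X$, shows that $w_\pm$ is non-zero, has $h_0$-weight $\lambda_{1,1}-\lambda_{r,s}=-\lambda_{r,s-1}$, and cyclically generates the lowest conformal weight space $X_\pm(0)$ as an $\sl_2$-module. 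Because $w_\pm$ lies in this lowest conformal weight space, every positive $\widehat{\sl}_2$-mode annihilates it, so the Sugawara construction gives $L_0 w_\pm=\tfrac{1}{2(k+2)}Cw_\pm$, where $C$ is the $\sl_2$-Casimir. Combining with $Cw_\pm=2t\Delta_{r,s\pm 1}w_\pm$ from the proof of Proposition \ref{prop:D11+_Drs-_lowest_weights} and $k+2=t$ yields $L_0 w_\pm=\Delta_{r,s\pm 1}w_\pm$, so $w_\pm$ is a genuine $L_0$-eigenvector (not merely a generalized one).

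The remaining step, which is the main technical point, is to promote $\sl_2$-generation of $X_\pm(0)$ to $\widehat{\sl}_2$-generation of all of $X_\pm$ by the single vector $w_\pm$. For this I would show that $X_\pm$ coincides with the $\widehat{\sl}_2$-submodule $\langle X_\pm(0)\rangle$ generated by $X_\pm(0)$. The quotient $Q:=X_\pm/\langle X_\pm(0)\rangle$ is itself a quotient of $W$, so any indecomposable summand of $Q$ admits a non-zero surjective intertwining operator from $\cD_{1,1}^+\otimes\cD_{r,s}^-$ and must, by Proposition \ref{prop:D11+_Drs-_lowest_weights}, have lowest conformal weight in $\{\Delta_{r,s+1},\Delta_{r,s-1}\}$. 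However, the conformal weights of $Q$ lie in $\Delta_{r,s\pm 1}+\ZZ_{>0}$: strict positivity rules out $\Delta_{r,s\pm 1}$, and the $\ZZ$-coset mismatch from \eqref{eqn:conf_wt_diff} rules out $\Delta_{r,s\mp 1}$. Hence $Q=0$, so $X_\pm=\langle X_\pm(0)\rangle=U(\widehat{\sl}_2)\cdot U(\sl_2)w_\pm=U(\widehat{\sl}_2)w_\pm$, completing the argument.
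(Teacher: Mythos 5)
Your proof is correct and follows essentially the same route as the paper's: the same Krull--Schmidt decomposition grouped by lowest conformal weight via Proposition \ref{prop:D11+_Drs-_lowest_weights}, the same quotient argument using the non-integrality of $\Delta_{r,s+1}-\Delta_{r,s-1}$ from \eqref{eqn:conf_wt_diff} to show $X_\pm$ is generated by its lowest conformal weight space, and the same identification of the cyclic generator $\pi_0\cY_\pm(v_{1,1},1)v_{r,s}'$ as an $\sl_2$-generator of that space. The only cosmetic difference is that you spell out the Sugawara computation showing this generator is a genuine $L_0$-eigenvector, which the paper leaves implicit in the eigenvalue analysis of $e_0f_0$ in the proof of Proposition \ref{prop:D11+_Drs-_lowest_weights}.
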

\begin{proof}
Since $\cD_{1,1}^+\boxtimes\cD_{r,s}^-$ has finite length, it is a finite direct sum of indecomposable submodules. By the preceding propositions, each indecomposable submodule has lowest conformal weight $\Delta_{r,s+1}$ or $\Delta_{r,s-1}$, so we can set $X_{\pm}$ to be the sum of all indecomposable summands with conformal weight $\Delta_{r,s\pm1}$. 

Now let $\tilX_+$ be the submodule of $X_+$ generated by the subspace of conformal weight $\Delta_{r,s+1}$. Then $X_+/\tilX_+$ is a lower-bounded generalized $L_k(\mathfrak{sl}_2)$-module with conformal weights contained in $\Delta_{r,s+1}+\ZZ_{\geq 1}$. However, if $X_+/\tilX_+$ is non-zero, then for any indecomposable summand $W$ of $X_+/\tilX_+$, there is a surjective intertwining operator of type $\binom{W}{\cD_{1,1}^+\,\cD_{r,s}^-}$. So the lowest conformal weight of $W$ is $\Delta_{r,s+1}$ or $\Delta_{r,s-1}$ by Proposition \ref{prop:D11+_Drs-_lowest_weights}, but $\Delta_{r,s\pm1}\notin\Delta_{r,s + 1}+\ZZ_{\geq 1}$ by \eqref{eqn:conf_wt_diff}. Thus $X_+/\tilX_+$ must be $0$, showing that $X_+$ is generated by its lowest conformal weight space, and so is $X_-$ by the same argument.

Now to find generators for $X_\pm$, it is enough to find generators for their lowest conformal weight spaces.
In fact, by the proof of Proposition \ref{prop:Drs+_times_Drs-_lower_bounded}, the lowest conformal weight space of $X_\pm$ is generated as an $\sl_2$-module by $\pi_0\cY_\pm(v_{1,1},1)v_{r,s}'$, where $\cY_\pm$ is any surjective intertwining operator of type $\binom{X_\pm}{\cD_{1,1}^+\,\cD_{r,s}^-}$.
By the proof of Proposition \ref{prop:D11+_Drs-_lowest_weights}, this generator is an $L_0$-eigenvector of conformal weight $\Delta_{r,s\pm1}$ and $h_0$-weight $\lambda_{1,1}-\lambda_{r,s}=-\lambda_{r,s-1}$, as required.
\end{proof}

To obtain more information about the summands $X_+$ and $X_-$ of the preceding proposition, we need to know which indecomposable lower-bounded weight $L_k(\mathfrak{sl}_2)$-modules have lowest conformal weight $\Delta_{r,s+1}$ or $\Delta_{r,s-1}$. To this end, note that for $1\leq r\leq u-1$ and $0\leq s\leq v$,
\begin{equation*}
\Delta_{r,s}=\Delta_{r',s'}\longleftrightarrow (r',s')=(r,s)\,\,\text{or}\,\,(u-r,v-s).
\end{equation*}
Using this relation and recalling the list of indecomposable lower-bounded modules in $\cC_k^{\mathrm{wt}}(\mathfrak{sl}_2)$ from Section \ref{subsec:affine-sl2-wt-mods}, we now place a strong upper bound on $X_+$:
\begin{prop}\label{prop:esummand}
    In the setting of Proposition \ref{prop:X+X-}, if the direct summand $X_+$ of $\cD_{1,1}^+\boxtimes\cD_{r,s}^-$ is non-zero, then $1\leq s\leq v-2$ and $X_+=\cE_{-\lambda_{r,s-1},\Delta_{r,s+1}}$. In particular, $X_+=0$ if $s=v-1$.
\end{prop}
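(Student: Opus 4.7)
The plan is to combine Proposition \ref{prop:X+X-} with the classification of indecomposable lower-bounded weight $L_k(\sl_2)$-modules from Section \ref{subsec:affine-sl2-wt-mods}. By Proposition \ref{prop:X+X-}, if $X_+ \neq 0$ then it is generated by a single $L_0$-eigenvector $w$ of conformal weight $\Delta_{r,s+1}$ and $h_0$-weight $-\lambda_{r,s-1}$. So every indecomposable summand of $X_+$ must be a lower-bounded indecomposable with lowest conformal weight $\Delta_{r,s+1}$ whose lowest conformal weight space contains a vector of $h_0$-weight $-\lambda_{r,s-1}$. The strategy is to rule out all candidates except $\cE_{-\lambda_{r,s-1}, \Delta_{r,s+1}}$, and then to argue that only one copy of it can occur.

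First I would dispose of the case $s = v-1$. Then $\Delta_{r,s+1} = \Delta_{r,v} = \Delta_{u-r,0}$, and inspection of the list in Section \ref{subsec:affine-sl2-wt-mods} reveals that the only lower-bounded indecomposable with this lowest conformal weight is $\cL_{u-r,0}$, since the $\cD^{\pm}_{r',s'}$ and $\cE^{(\pm)}_{r',s'}$ labels require $1 \leq s' \leq v-1$ while $v \nmid s+1$ rules out matches with $\cL_{r',0}$. But the $h_0$-weights of the lowest conformal weight space of $\cL_{u-r,0}$ are integers of fixed parity modulo $2$, while the generator $w$ has $h_0$-weight $-\lambda_{r,v-2} = u-r+1-2t$. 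When $v \geq 3$, $\gcd(u,v)=1$ forces $2t = 2u/v \notin \ZZ$, so this weight is not an integer. When $v = 2$, $u$ is odd and $(u-r-1) - (1-r) = u-2$ is odd, so $1-r$ has the wrong parity. Either case contradicts $X_+ \neq 0$, so $X_+ = 0$ when $s = v-1$.

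For $1 \leq s \leq v-2$, I would identify $X_+$ by elimination. The identities $\lambda_{r,s-1} - \lambda_{r,s+1} = 2t$ and $\lambda_{r,s-1} + \lambda_{r,s+1} = 2(r-1) - 2ts$, combined with the fact that neither $t$ nor $ts$ lies in $\ZZ$ for $1 \leq s \leq v-1$ (again by $\gcd(u,v)=1$), yield
\[
-\lambda_{r,s-1} \notin \pm\lambda_{r,s+1} + 2\ZZ \quad \text{and} \quad -\lambda_{r,s-1} \notin \pm\lambda_{u-r,v-s-1} + 2\ZZ.
\]
Since $\cD^{\pm}_{r,s+1}$ and $\cD^{\pm}_{u-r,v-s-1}$ each have one-dimensional lowest conformal weight spaces with $h_0$-weight $\pm\lambda_{r,s+1}$ or $\pm\lambda_{u-r,v-s-1}$, and the reducible relaxed modules $\cE^{\pm}_{r,s+1}$ and $\cE^{\pm}_{u-r,v-s-1}$ have lowest-space $h_0$-weights contained in $\pm\lambda_{r,s+1}+2\ZZ$, none of these can contain $-\lambda_{r,s-1}$. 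The only remaining candidate is the typical simple relaxed module $\cE_{-\lambda_{r,s-1},\Delta_{r,s+1}}$, whose typicality is precisely the non-containment displayed above.

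Finally, to argue that only a single copy of $\cE := \cE_{-\lambda_{r,s-1},\Delta_{r,s+1}}$ appears, I would use that $X_+$ is generated by the single vector $w$. If $X_+ \cong \cE^{\oplus n}$ with $n\geq 2$, then because the weight-$(-\lambda_{r,s-1})$ subspace of the lowest conformal weight space of $\cE$ is one-dimensional (by typicality and the construction of relaxed modules in Section \ref{subsec:affine-sl2-wt-mods}), the projections of $w$ to the $n$ summands are all scalar multiples of a common vector, so $U(\widehat{\sl_2}) w$ is contained in a proper diagonal copy of $\cE$ inside $\cE^{\oplus n}$, contradicting $X_+ = U(\widehat{\sl_2}) w$. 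Thus $n = 1$. The main obstacle is just the bookkeeping needed to enumerate all indecomposable candidates and verify non-containment of $-\lambda_{r,s-1}$ in each relevant $h_0$-weight coset, but this reduces to routine arithmetic in $\QQ/2\ZZ$ using only $\gcd(u,v)=1$.
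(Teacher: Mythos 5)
Your proof is correct and follows essentially the same route as the paper: it uses Proposition \ref{prop:X+X-} to reduce to a single generator of known conformal weight and $h_0$-weight, enumerates the indecomposable lower-bounded candidates with lowest conformal weight $\Delta_{r,s+1}$, and eliminates all but $\cE_{-\lambda_{r,s-1},\Delta_{r,s+1}}$ by comparing $h_0$-weight cosets modulo $2\ZZ$ using $t,ts\notin\ZZ$, finishing with single-generation for multiplicity one. (The paper handles $s=v-1$ uniformly via $\lambda_{u-r,0}-(-\lambda_{r,v-2})=2t-2\notin 2\ZZ$ rather than splitting into $v=2$ and $v\geq 3$, and your clause ``$v\nmid s+1$ rules out matches with $\cL_{r',0}$'' is misstated---for $s=v-1$ one has $v\mid s+1$, which is precisely why $\cL_{u-r,0}$ is the unique candidate---but the conclusion you draw from it is the correct one.)
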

\begin{proof}
    First suppose $s=v-1$. Then the lowest conformal weight of $X_+$ is $\Delta_{r,v}=\Delta_{u-r,0}$, and the only indecomposable lower-bounded weight $L_k(\mathfrak{sl}_2)$-module with this lowest conformal weight is $\cL_{u-r,0}$. But $\cL_{u-r,0}$ does not contain any vector of $h_0$-weight $-\lambda_{r,v-2}$ because
    \begin{equation*}
        \lambda_{u-r,0}-(-\lambda_{r,v-2})= (u-r-1)+(r-1-t(v-2)) = 2t-2\notin 2\ZZ.
    \end{equation*}
    Thus by Proposition \ref{prop:X+X-}, $X_+$ has to be $0$ in this case.

    Now suppose $1\leq s\leq v-2$ and assume $X_+\neq 0$. Then by Proposition \ref{prop:X+X-}, $X_+$ is generated by a single lowest conformal weight vector of conformal weight $\Delta_{r,s+1}=\Delta_{u-r,v-s-1}$ and $h_0$-weight $-\lambda_{r,s-1}$. The indecomposable lower-bounded weight $L_k(\mathfrak{sl}_2)$-modules with lowest conformal weight $\Delta_{r,s+1}$ are:
    \begin{equation*}
        \cD_{r,s+1}^\pm, \,\cD_{u-r,v-s-1}^\pm,\, \cE_{r,s+1}^\pm, \cE_{u-r,v-s-1}^\pm,\, \cE_{\lambda,\Delta_{r,s+1}} \,\,(\lambda\neq\pm\lambda_{r,s+1}\,\,\mathrm{mod}\,\,2\ZZ).
    \end{equation*}
    In the first four cases, these modules have $h_0$-weights congruent to $\pm\lambda_{r,s+1}$ modulo $2\ZZ$, but
    \begin{align*}
        \lambda_{r,s+1}-(-\lambda_{r,s-1}) & =r-1-t(s+1)+r-1-t(s-1) = 2(r-1-st)\notin 2\ZZ,\\
        -\lambda_{r,s+1}-(-\lambda_{r,s-1}) & = -r+1+t(s+1)+r-1-t(s-1)= 2t\notin 2\ZZ.
    \end{align*}
    Thus any indecomposable direct summand of $X_+$ must be isomorphic to $\cE_{-\lambda_{r,s-1},\Delta_{r,s+1}}$. Then because $X_+$ is singly-generated and non-zero, it follows that $X_+=\cE_{-\lambda_{r,s-1},\Delta_{r,s+1}}$, as required.
\end{proof}

We have to work harder to place an upper bound on $X_-$:
\begin{prop}\label{prop:dsummand}
    In the setting of Proposition \ref{prop:X+X-}, assume that the direct summand $X_-$ of $\cD_{1,1}^+\boxtimes\cD_{r,s}^-$ is non-zero. Then $X_-=\cD_{r,s-1}^-$; in particular, $X_-=\cL_{r,0}$ if $s=1$.
\end{prop}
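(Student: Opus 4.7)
My plan is to pin down the $\mathfrak{sl}_2$-module structure of the lowest conformal weight space of $X_-$ by exploiting the singular vector $s_{1,1}$ of \eqref{eqn:singular_vector}. By Proposition \ref{prop:X+X-}, $X_-$ is generated as an $L_k(\mathfrak{sl}_2)$-module by a single vector $g := \pi_0 \cY_-(v_{1,1},1)v_{r,s}'$ of conformal weight $\Delta_{r,s-1}$ and $h_0$-weight $-\lambda_{r,s-1}$, where $\cY_-$ is the projection of the tensor product intertwining operator to the summand $X_-$. Hence $X_-$ is completely determined once the $\mathfrak{sl}_2$-submodule $U(\mathfrak{sl}_2)\cdot g$ of its lowest conformal weight space is identified. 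The central claim is that $f_0 g = 0$: then $U(\mathfrak{sl}_2)\cdot g$ is the infinite-dimensional irreducible lowest-weight module $D^-_{-\lambda_{r,s-1}}$ when $s\geq 2$ (since $-\lambda_{r,s-1}\notin\ZZ$), respectively the finite-dimensional $L_r$ when $s=1$ (where $-\lambda_{r,0}=1-r\in\ZZ_{\leq 0}$). The classification of indecomposable lower-bounded weight $L_k(\mathfrak{sl}_2)$-modules recalled in Section \ref{subsec:affine-sl2-wt-mods} identifies the corresponding affine module uniquely as $\cD_{r,s-1}^-$, with the convention $\cD_{r,0}^-=\cL_{r,0}$.

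To establish $f_0 g = 0$, I would introduce the shorthand $g^{(k,m)} := \pi_0\cY_-(f_0^k v_{1,1},1)\,e_0^m v_{r,s}'$, so that $g=g^{(0,0)}$ and $f_0 g = g^{(1,0)}$. The argument combines three computations. First, the Casimir analysis from the proof of Proposition \ref{prop:D11+_Drs-_lowest_weights}, specialized to $X_-$ whose Casimir eigenvalue is $2t\Delta_{r,s-1}=\frac{1}{2}\lambda_{r,s-1}(\lambda_{r,s-1}+2)$, forces $e_0 f_0 g = 0$ and therefore $f_0 e_0 g = \lambda_{r,s-1} g$. Second, applying the commutator formula \eqref{eqn:intw_op_comm} to $f_0\cY_-(v_{1,1},x)e_0 v_{r,s}'$ together with $f_0 e_0 v_{r,s}' = \lambda_{r,s} v_{r,s}'$ gives $f_0 e_0 g = \lambda_{r,s} g + g^{(1,1)}$, so that $g^{(1,1)} = (\lambda_{r,s-1}-\lambda_{r,s})g = t g$. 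Third, a similar application of \eqref{eqn:intw_op_comm} to $f_0\cY_-(f_0 v_{1,1},x)e_0 v_{r,s}'$, combined with the identity $g^{(1,1)}=tg$, yields $g^{(2,1)} = (t-\lambda_{r,s})f_0 g$.

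On the other hand, the singular vector identity \eqref{eqn:pi0_sing_reln}, applied inside the summand $X_-$, reads $g^{(2,1)} = -\lambda_{r,s}(t+1)f_0 g$. Equating the two expressions for $g^{(2,1)}$ gives $t(1+\lambda_{r,s})f_0 g = 0$, and since $t=u/v\neq 0$ and $\lambda_{r,s}+1 = r-ts\neq 0$ (because $rv=us$ would force $v\mid s$, which is impossible as $\gcd(u,v)=1$ and $1\leq s\leq v-1$), we conclude $f_0 g = 0$. The main bookkeeping issue I expect is to verify that the projection $\pi_0$ onto conformal weight $\Delta_{r,s-1}$ indeed isolates the $X_-$ contribution cleanly and commutes appropriately with the zero-modes of $\mathfrak{sl}_2$; this is handled by the observation from \eqref{eqn:conf_wt_diff} that $\Delta_{r,s-1}-\Delta_{r,s+1}=r-ts\notin\ZZ$, so the conformal weight spectra of $X_+$ and $X_-$ are disjoint. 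Once this is in place, the remainder of the argument is a routine but delicate manipulation of the intertwining operator commutator formula and the singular vector relation.
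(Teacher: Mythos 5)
Your proposal is correct, and it reaches the paper's conclusion by a genuinely different (and somewhat leaner) computation. The overall strategy coincides with the paper's: show that the generator $g=\pi_0\cY_-(v_{1,1},1)v_{r,s}'$ satisfies $f_0g=0$ by exploiting the singular vector $s_{1,1}$, and then identify $X_-$ from the classification of indecomposable lower-bounded weight modules. The difference is in how the relation $t(1+\lambda_{r,s})f_0g=t(r-ts)f_0g=0$ is extracted. The paper projects the singular-vector identity \eqref{eqn:sing_vector_calc1} to conformal weight $\Delta_{r,s-1}+1$ and applies the mode $h_1$, which forces it to manipulate $e_{-1},h_{-1},f_{-1}$ and their brackets with $h_1$; you instead never leave the lowest conformal weight space: you evaluate $g^{(2,1)}=\pi_0\cY_-(f_0^2v_{1,1},1)e_0v_{r,s}'$ a second time using only the $n=0$ commutator formula, the Casimir constraint $e_0f_0g=0$ (which pins down $g^{(1,1)}=tg$), and the identity $f_0e_0v_{r,s}'=\lambda_{r,s}v_{r,s}'$, and then equate with the $\pi_0$-projected singular-vector relation \eqref{eqn:pi0_sing_reln}. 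I checked your three identities $f_0e_0g=\lambda_{r,s-1}g$, $g^{(1,1)}=tg$, and $g^{(2,1)}=(t-\lambda_{r,s})f_0g$; they are consistent with the paper's own intermediate formulas $g^{(2,1)}=(e_0f_0+2(t+1))f_0g$ and $e_0f_0f_0g=-(\lambda_{r,s}+t+2)f_0g$, and both routes land on the same coefficient $t(r-ts)\neq 0$. Your approach buys a shorter computation with no level-one modes; the paper's buys an extra independent relation that it happens not to need in the end. One small imprecision: for $s=1$ the statement that $U(\mathfrak{sl}_2)\cdot g\cong L_r$ does not follow from $f_0g=0$ alone (a lowest-weight Verma module with lowest weight $1-r\in\ZZ_{\leq 0}$ is reducible); but since you immediately invoke the classification of indecomposable lower-bounded weight $L_k(\mathfrak{sl}_2)$-modules — which is exactly how the paper disposes of the $s=1$ case, without even needing $f_0g=0$ there — this is not a gap.
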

\begin{proof}
 Assume $X_-\neq 0$, and first suppose $s=1$. Then by Proposition \ref{prop:X+X-}, $X_-$ is generated by a single lowest conformal weight vector of conformal weight $\Delta_{r,0}$ and $h_0$-weight $-\lambda_{r,0}$. Since $\cL_{r,0}$ is the only indecomposable lower-bounded weight $L_k(\mathfrak{sl}_2)$-module containing such a lowest conformal weight vector, it follows that $X_-=\cL_{r,0}$ in this case.

 Now suppose $2\leq s\leq v-1$. Then by the proof of Proposition \ref{prop:X+X-}, $X_-$ is generated by the lowest conformal weight vector $u:=\pi_0\cY(v_{1,1},1)v_{r,s}'$, where $\cY$ is a surjective intertwining operator of type $\binom{X_-}{\cD_{1,1}^+\,\cD_{r,s}^-}$, $\pi_0$ denotes projection to the lowest conformal weight space of $X_-$, $v_{1,1}\in\cD_{1,1}^+$ is a highest-weight vector, and $v_{r,s}'\in\cD_{r,s}^-$ is a lowest-weight vector. We want to show that $u\in X_-$ is a lowest-weight vector; then $X_-=\cD_{r,s-1}^-$ since this is the only lowest-weight $L_k(\mathfrak{sl}_2)$-module generated by a lowest-weight vector of the correct lowest weight. Thus it is enough to show that $f.u=0$.

 To show that $f.u=0$, recall the singular vector $s_{1,1}\in\cD_{1,1}^+$ from \eqref{eqn:singular_vector}, which leads to the relation \eqref{eqn:sing_vector_calc1}. In \eqref{eqn:sing_vector_calc1}, we first set $x=1$, then apply the projection $\pi_1$ from $X_-$ to the conformal weight space of conformal weight $\Delta_{r,s-1}+1$, and then apply the affine Lie algebra mode $h_1$. We get, using commutator formulas for $\widehat{\mathfrak{sl}}_2$ and intertwining operators:
 \begin{align*}
0 & = h_1e_{-1}\cdot\pi_0\cY(f_0^2 v_{1,1},1)v_{r,s}'-(t+1)h_1h_{-1}\cdot\pi_0\cY(f_0 v_{1,1},1)v_{r,s}'-t(t+1)h_1f_{-1}\cdot\pi_0\cY(v_{1,1},1)v_{r,s}'\nonumber\\
& \qquad +h_1\cdot\pi_1\cY(f_0^2 v_{1,1},1)e_0 v_{r,s}'+(t+1)\lambda_{r,s} h_1\cdot\pi_1\cY(f_0 v_{1,1},1)v_{r,s}'\nonumber\\
& = 2e_0f_0\cdot\pi_0\cY(f_0 v_{1,1},1)v_{r,s}'-2k(t+1)\pi_0\cY(f_0 v_{1,1},1)v_{r,s}'+2t(t+1)f_0\cdot\pi_0\cY(v_{1,1},1)v_{r,s}'\nonumber\\
&\qquad +\pi_0\cY(h_0f_0^2 v_{1,1},1)e_0 v_{r,s}'+(t+1)\lambda_{r,s}\pi_0\cY(h_0f_0 v_{1,1},1)v_{r,s}'\nonumber\\
& = \left(2e_0f_0-2(t-2)(t+1)+2t(t+1)+(t+1)\lambda_{r,s}(\lambda_{1,1}-2)\right)f.u +(\lambda_{1,1}-4)\pi_0\cY(f_0^2 v_{1,1},1)e_0 v_{r,s}'.
 \end{align*}
In this calculation, $\lambda_{1,1}=-t$, and 
\begin{align*}
    \pi_0\cY(f_0^2 v_{1,1},1) e_0 v_{r,s}' & = e_0\cdot\pi_0\cY(f_0^2 v_{1,1},1)v_{r,s}'-\pi_0\cY(e_0f_0^2 v_{1,1},1)v_{r,s}'\nonumber\\
    & = e_0f_0f_0 u-(\lambda_{1,1}+\lambda_{1,1}-2)\pi_0\cY(f_0 v_{1,1},1)v_{r,s}' =(e_0f_0+2(t+1))f. u.
\end{align*}
Thus we get
\begin{align*}
    \left(-(t+2)e_0f_0+4(t+1)-(t+1)(t+2)\lambda_{r,s}-2(t+1)(t+4)\right)f.u =0,
\end{align*}
and dividing by $-(t+2)$ yields
\begin{equation*}
    \left(e_0f_0+(t+1)(\lambda_{r,s}+2)\right)f. u = 0.
\end{equation*}
From the calculations at the end of the proof of Proposition \ref{prop:D11+_Drs-_lowest_weights}, we see that $e_0f_0u =0$, and thus
\begin{equation*}
    e_0f_0f_0 u =f_0e_0f_0u+h_0f_0 u =0+(\lambda_{1,1}-\lambda_{r,s}-2)f. u =-(\lambda_{r,s}+t+2)f. u.
\end{equation*}
Then
\begin{align*}
    0 & = (-\lambda_{r,s}-t-2+(t+1)(\lambda_{r,s}+2))f. u = t(\lambda_{r,s}+1)f. u =t(r-st)f. u.
\end{align*}
 Since $t(r-st)\neq 0$, it follows that $f. u=0$ as required, and therefore $X_-=\cD_{r,s-1}^-$ if $X_-\neq 0$.
\end{proof}

To prove that $X_+$ and $X_-$ are non-zero in most cases, we obtain non-zero intertwining operators via the free field realization $L_k(\mathfrak{sl}_2)\hookrightarrow\Vir_{c_k}\otimes\Pi(0)$, thus completing the calculation of $\cD_{1,1}^+\boxtimes\cD_{r,s}^-$: 

\begin{theorem}\label{thm:nonzero}
For $1\leq r\leq u-1$ and $1\leq s\leq v-1$, we have
\begin{equation*}
    \cD_{1,1}^+\boxtimes\cD_{r,s}^- \cong\begin{cases}
        \cL_{r,0}\,[\oplus\,\cE_{-\lambda_{r,0},\Delta_{r,2}}] & \text{if}\,\,\,s=1\\
        \cD_{r,s-1}^-\oplus\cE_{-\lambda_{r,s-1},\Delta_{r,s+1}} & \text{if}\,\,\, 2\leq s\leq v-2\\
        \cD_{r,v-2}^- & \text{if} \,\,\,s=v-1
    \end{cases}
\end{equation*}
in $\cC_k^{\mathrm{wt}}(\mathfrak{sl}_2)$, where the term in brackets does not occur when $v=2$.
\end{theorem}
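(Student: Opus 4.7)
By Propositions \ref{prop:X+X-}, \ref{prop:esummand}, and \ref{prop:dsummand}, we already have $\cD_{1,1}^+\boxtimes \cD_{r,s}^- \cong X_+ \oplus X_-$ with each summand either zero or the module identified there. It therefore remains only to show the non-vanishings claimed in the theorem. The plan is to establish these by exhibiting non-zero $L_k(\mathfrak{sl}_2)$-intertwining operators of type $\binom{\cdot}{\cD_{1,1}^+,\cD_{r,s}^-}$ via the free field realization $L_k(\mathfrak{sl}_2)\hookrightarrow A=\Vir_{c_k}\otimes\Pi(0)$ and the rigid semisimple fusion rules of $\cC_A^\loc$ from Theorem \ref{thm:CAloc-rigid}.

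The general mechanism: by Remark \ref{rem:onetone}, both $\cD_{1,1}^+$ and $\cD_{r,s}^-$ embed into the restrictions of simple $A$-modules $Y_1, Y_2 \in \cC_A^\loc$. For any simple direct summand $Z$ of $Y_1 \boxtimes_A Y_2$, the projection of the tensor product intertwining operator $\cY^A_{Y_1,Y_2}$ onto $Z$ is a non-zero $A$-intertwining operator, which by Lemma \ref{lem:nonzerofusion} applied in $\cC_A^\loc$ is non-vanishing on any non-zero pair of vectors in $Y_1\otimes Y_2$. Restricting to $L_k(\mathfrak{sl}_2)$-modules and pre-composing with the embeddings of $\cD_{1,1}^+$ and $\cD_{r,s}^-$ then yields a non-zero $L_k(\mathfrak{sl}_2)$-intertwining operator of type $\binom{\cG(Z)}{\cD_{1,1}^+,\cD_{r,s}^-}$, whose lowest-conformal-weight behavior pins down which of $X_+, X_-$ must contribute.

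Concretely, for the main range $2\leq s\leq v-2$, take $Y_1=M^k_{1,2}\otimes\Pi_0(\nu_{1,2})$ so that $\cD_{1,1}^+$ embeds in $\cG(Y_1)\cong\sigma(\cE^-_{u-1,v-2})$ via $\cD_{1,1}^+\cong\sigma(\cD^-_{u-1,v-2})$, and $Y_2=M^k_{u-r,v-s}\otimes\Pi_{-1}(\nu_{u-r,v-s})$ so that $\cD^-_{r,s}$ embeds in $\cG(Y_2)=\cE^-_{r,s}$. Using the identity $\nu_{1,2}+\nu_{u-r,v-s}=\nu_{u-r,v-s+1}$ combined with the fusion rule \eqref{eqn:A-fusion-rules} and the restriction formula \eqref{eqn:FF-affine-sl2-correspondence}, one computes $\cG(Y_1\boxtimes_A Y_2)\cong \cE^-_{r,s-1}\oplus\cE_{-\lambda_{r,s-1},\Delta_{r,s+1}}$: the first summand is atypical with unique simple submodule $X_-=\cD^-_{r,s-1}$, and the second is typical and equal to $X_+$. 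Since the lowest conformal weights $\Delta_{r,s\pm 1}$ of the two summands lie in distinct cosets of $\ZZ$ by \eqref{eqn:conf_wt_diff}, the two non-zero projected intertwining operators force both $X_+$ and $X_-$ to be non-zero. The boundary case $s=v-1$ is similar except the Virasoro fusion degenerates to a single summand $\cE^-_{r,v-2}$, yielding only $X_-$; the case $v=2$, $s=1$ (where also $s=v-1$) is handled identically using instead $Y_1=M^k_{u-1,1}\otimes\Pi_1(\nu_{u-1,1})$ to realize $\cD_{1,1}^+$.

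The main obstacle is the remaining case $v\geq 3$, $s=1$, where the above choice of $Y_1, Y_2$ produces only $\cG(Y_1\boxtimes_A Y_2)=\cE_{-\lambda_{r,0},\Delta_{r,2}}=X_+$ and gives no direct witness for $X_-=\cL_{r,0}$. To handle this, the plan is to replace $Y_1$ by $\tilde Y_1=M^k_{1,1}\otimes\Pi_{-1}(\nu_{1,1})$, for which $\cG(\tilde Y_1)=\cE^-_{u-1,v-1}$ admits $\cD_{1,1}^+$ as a quotient via the exact sequence $0\to \cD^-_{u-1,v-1}\to \cE^-_{u-1,v-1}\to \cD_{1,1}^+\to 0$. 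One computes $\tilde Y_1\boxtimes_A Y_2\cong M^k_{u-r,v-1}\otimes\Pi_{-2}(\nu_{u-r,v-1})$ with $L_k(\mathfrak{sl}_2)$-restriction $\sigma^{-1}(\cE^-_{r,1})$, whose simple quotient is $\sigma^{-1}(\cD^+_{u-r,v-1})\cong\cL_{r,0}$. Composing the $A$-module intertwining operator with the quotient $\sigma^{-1}(\cE^-_{r,1})\twoheadrightarrow\cL_{r,0}$ produces an $L_k(\mathfrak{sl}_2)$-intertwining operator $\cG(\tilde Y_1)\otimes\cD^-_{r,1}\to\cL_{r,0}$ which vanishes on $\cD^-_{u-1,v-1}\otimes\cD^-_{r,1}$ (because the corresponding fusion product $\cD^-_{u-1,v-1}\boxtimes\cD^-_{r,1}\cong\sigma^{-1}(\cD^-_{r,1})$ maps into the kernel submodule). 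Showing that this composite is non-zero, so that it descends to a non-zero intertwining operator of type $\binom{\cL_{r,0}}{\cD_{1,1}^+,\cD^-_{r,1}}$ and hence proves $X_-\neq 0$, is the most delicate step; the expected argument uses that $\sigma^{-1}(\cE^-_{r,1})$ is simple as an $A$-module, so the proper $L_k(\mathfrak{sl}_2)$-submodule $\sigma^{-1}(\cD^-_{r,1})$ is not $A$-stable and hence cannot contain the full image of the $A$-intertwining operator.
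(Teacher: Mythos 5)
Your argument for $2\leq s\leq v-1$ and for $v=2$ is correct and is essentially the paper's proof: embed $\cD_{1,1}^+$ and $\cD_{r,s}^-$ into restrictions of simple $\Vir_{c_k}\otimes\Pi(0)$-modules, use the semisimple fusion rules of $\cC_A^\loc$ together with Lemma \ref{lem:nonzerofusion} to produce non-zero $L_k(\sl_2)$-intertwining operators, and let the conformal-weight separation \eqref{eqn:conf_wt_diff} force both summands $X_\pm$ to be non-zero. You also correctly isolate the one problematic case, $v\geq 3$, $s=1$, where this mechanism only witnesses $X_+$.

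However, your proposed treatment of that remaining case has a genuine gap at exactly the step you flag as delicate. The restriction of $\cY^A_{\tilde Y_1,Y_2}$ to $\cG(\tilde Y_1)\otimes\cD_{r,1}^-$ is only a $V$-module intertwining operator, because $\cD_{r,1}^-$ is a $V$-submodule but not an $A$-submodule of $\cG(Y_2)$; consequently the span $S$ of its coefficients is only a $V$-submodule of $\sigma^{-1}(\cE^-_{r,1})$, and the relevant alternatives are $S=\sigma^{-1}(\cD^-_{r,1})$ or $S=\sigma^{-1}(\cE^-_{r,1})$. The fact that $\sigma^{-1}(\cD^-_{r,1})$ is not $A$-stable does not rule out the first alternative: the $A$-submodule generated by \emph{any} non-zero subspace of the simple $A$-module $\tilde Y_1\boxtimes_A Y_2$ is the whole module, so surjectivity of the full $A$-intertwining operator (over all of $\tilde Y_1\otimes Y_2$) gives no information about $S$. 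Ruling out $S=\sigma^{-1}(\cD^-_{r,1})$ amounts to showing $\Hom_\cC(\cE^-_{u-1,v-1}\boxtimes\cD^-_{r,1},\,\sigma^{-1}(\cD^-_{r,1}))$ does not absorb this map, which requires fusion data you do not have at this stage. The paper avoids the issue entirely: it converts the needed non-vanishing into the already-settled $s=v-1$ case via the chain of isomorphisms of intertwining-operator spaces
\begin{equation*}
    \binom{\cL_{r,0}}{\cD_{1,1}^+\ \cD_{r,1}^-}\cong\binom{\cD_{r,1}^+}{\cD_{1,1}^+\ \cL_{r,0}}\cong\binom{\cD_{u-r,v-2}^-}{\cD_{1,1}^+\ \cD_{u-r,v-1}^-},
\end{equation*}
using \cite[Proposition 3.46]{HLZ2}, \eqref{eqn:HW-as-SF-of-LW}, and Proposition \ref{prop:tens-prod-and-spec-flow}. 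You should replace your quotient construction with this (or an equivalent) argument; as written, the non-vanishing of $X_-$ for $v\geq 3$, $s=1$ is not established.
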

\begin{proof}
By Remark \ref{rem:onetone}, $\cD_{1,1}^+$ embeds into the simple $\Vir_{c_k}\otimes\Pi(0)$-module $M_1$, where 
\begin{equation*}
M_1 =\begin{cases}
M^k_{1,1}\otimes\Pi_1(\nu_{u-1,1}) & \text{if}\,\,\, v=2\\
M^k_{1,2}\otimes\Pi_0(\nu_{1,2}) & \text{if}\,\,\, v\geq 3\\
\end{cases} .
\end{equation*}
Similarly, since $\cD_{r,s}^- =\sigma^{-1}(\cD_{u-r,v-s-1}^+)$ if $1\leq s\leq v-2$ and $\cD_{r,s}^- =\sigma^{-2}(\cD_{r,s}^+)$ if $s=v-1$ by \eqref{eqn:HW-as-SF-of-LW}, $\cD_{r,s}^-$ embeds into the simple $\Vir_{c_k}\otimes\Pi(0)$-module $M_2$, where
\begin{equation*}
M_2 = M^k_{r,s}\otimes\Pi_{-1}(\nu_{u-r,v-s}).
\end{equation*}
The $\Vir_{c_k}$-modules contained in $M_1$ and $M_2$ have fusion rules
\begin{equation*}
M^k_{1,1}\boxtimes M^k_{r,s} \cong M^k_{r,s},\qquad M^k_{1,2}\boxtimes M^k_{r,s}\cong\begin{cases}
    M^k_{r,2} & \text{if}\,\,\,s=1\\
    M^k_{r,s-1}\oplus M^k_{r,s+1} & \text{if}\,\,\,2\leq s\leq v-2\\
    M^k_{r,v-2} & \text{if}\,\,\,s=v-1
\end{cases} .
\end{equation*}
by \eqref{eqn:Vir-fus-rules}, while for any $\ell,\ell'\in\ZZ$ and $\lambda,\lambda'\in\CC$, there is a non-zero $\Pi(0)$-module intertwining operator of type $\binom{\Pi_{\ell+\ell'}(\lambda+\lambda')}{\Pi_\ell(\lambda)\,\,\Pi_{\ell'}(\lambda')}$ by \eqref{eqn:P(0)-fusion-rules}.

In the case $v=2$, and thus $s=1$, we get a non-zero $\Vir_{c_k}\otimes\Pi(0)$-module intertwining operator $\cY$ of type $\binom{M_3}{M_1\,M_2}$, where
\begin{equation*}
    M_3 = M^k_{r,1}\otimes \Pi_0(\nu_{u-1,1}+\nu_{u-r,1}) = M^k_{r,1}\otimes \Pi_0(\nu_{r,1}).
\end{equation*}
Thus by Lemma \ref{lem:nonzerofusion} and \eqref{eqn:FF-affine-sl2-correspondence}, there is a non-zero $L_k(\mathfrak{sl}_2)$-module intertwining operator of type $\binom{\sigma(\cE^-_{u-r,1})}{\cD_{1,1}^+\,\,\cD_{r,1}^-}$, and there is an $L_k(\mathfrak{sl}_2)$-module surjection from $\cD_{1,1}^+\boxtimes\cD_{r,s}^-$ onto the image of this intertwining operator. By \eqref{eqn:exact-seq-E+E-}, this image is either $\sigma(\cD_{u-r,1}^-)\cong\cL_{r,0}$ (recall \eqref{eqn:HW-as-SF-of-LW}) or $\sigma(\cE_{u-r,1}^-)$ itself. By Proposition \ref{prop:dsummand}, the first case must occur and the direct summand $X_-$ of $\cD_{1,1}^+\boxtimes\cD_{r,1}^-$ is $\cL_{r,0}$. On the other hand, the direct summand $X_+$ of $\cD_{1,1}^+\boxtimes\cD_{r,1}^-$ is $0$ by Proposition \ref{prop:esummand}.

When $v\geq 3$ and $1\leq s\leq v-2$, we get a non-zero $\Vir_{c_k}\otimes\Pi(0)$-module intertwining operator of type $\binom{M_3^+}{M_1\,M_2}$, where
\begin{equation*}
M_3^+ = M^k_{r,s+1}\otimes\Pi_{-1}(\nu_{1,2}+\nu_{u-r,v-s}) = M^k_{r,s+1}\otimes\Pi_{-1}(\nu_{u-r,v-s+1}).
\end{equation*}
Since $\nu_{u-r,v-s+1}\neq\nu_{r,s+1},\nu_{u-r,u-s-1}$ mod $\ZZ$ when $1\leq s\leq v-2$, and also $2\nu_{u-r,v-s+1}-k=-\lambda_{r,s-1}$, Lemma \ref{lem:nonzerofusion} and \eqref{eqn:FF-affine-sl2-correspondence} imply that there is a non-zero $L_k(\mathfrak{sl}_2)$-module intertwining operator of type $\binom{\cE_{-\lambda_{r,s-1},\Delta_{r,s+1}}}{\cD_{1,1}^+\,\,\cD_{r,s}^-}$. Thus Proposition \ref{prop:esummand} shows that the direct summand $X_+$ of $\cD_{1,1}^+\boxtimes\cD_{r,s}^-$ is $\cE_{-\lambda_{r,s-1},\Delta_{r,s+1}}$ for $1\leq s\leq v-2$ and $0$ for $s=v-1$.

Similarly, when $v\geq 3$ and $2\leq s\leq v-1$, we get a non-zero $\Vir_{c_k}\otimes\Pi(0)$-module intertwining operator of type $\binom{M_3^-}{M_1\,M_2}$, where
\begin{equation*}
    M_3^- = M^k_{r,s-1}\otimes\Pi_{-1}(\nu_{u-r,v-s+1}).
\end{equation*}
Thus by Lemma \ref{lem:nonzerofusion} and \eqref{eqn:FF-affine-sl2-correspondence}, there is a non-zero $L_k(\mathfrak{sl}_2)$-module intertwining operator of type $\binom{\cE_{r,s}^-}{\cD_{1,1}^+\,\,\cD_{r,s}^-}$. So by \eqref{eqn:exact-seq-E+E-} and Proposition \ref{prop:dsummand}, the direct summand $X_-$ of $\cD_{1,1}^+\boxtimes\cD_{r,s}^-$ is $\cD_{r,s-1}^-$ when $2\leq s\leq v-1$. For the case $s=1$, there are isomorphisms of spaces of intertwining operators of the following types:
\begin{equation*}
    \binom{\cL_{r,0}}{\cD_{1,1}^+\,\cD_{r,1}^-}\longleftrightarrow\binom{\cD_{r,1}^+}{\cD_{1,1}^+\,\cL_{r,0}}\longleftrightarrow\binom{\sigma(\cD_{u-r,v-2}^-)}{\cD_{1,1}^+\,\sigma(\cD_{u-r,v-1}^-)}\longleftrightarrow\binom{\cD_{u-r,v-2}^-}{\cD_{1,1}^+\,\cD_{u-r,v-1}^-}.
\end{equation*}
The first isomorphism comes from \cite[Proposition 3.46]{HLZ2}, the second from \eqref{eqn:HW-as-SF-of-LW}, and the third from Proposition \ref{prop:tens-prod-and-spec-flow}. Since the last space of intertwining operators is non-zero, so is the first, and we get $X_-=\cL_{r,0}$ when $s=1$.
\end{proof}

As a consequence of the preceding theorem, we can determine the fusion product of the simple quotient in the exact sequence \eqref{eqn:A-as-Lk(sl2)-mod} with itself:
\begin{cor}\label{cor:fusionofd11}
In $\cC_k^{\mathrm{wt}}(\mathfrak{sl}_2)$,
\begin{equation*}
\sigma(\cD_{1,1}^+) \boxtimes \sigma(\cD_{1,1}^+) \cong 
\begin{cases}
\sigma^4(\cL_{1,0}) &\text{if}\,\,\, v = 2\\
\sigma^2(\cD_{1,2}^+)\oplus\sigma^3(\cE_{\lambda_{1,3},\Delta_{1,1}})  &\text{if}\,\,\, v \geq 3\\
\end{cases}.
\end{equation*}
\end{cor}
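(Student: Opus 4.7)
The plan is to reduce the corollary to Theorem \ref{thm:nonzero} via the spectral flow compatibility of Proposition \ref{prop:tens-prod-and-spec-flow}. First I would apply that proposition to get
\[
\sigma(\cD_{1,1}^+)\boxtimes\sigma(\cD_{1,1}^+)\cong \sigma^2(\cD_{1,1}^+\boxtimes\cD_{1,1}^+).
\]
Then I would use the spectral flow realization \eqref{eqn:HW-as-SF-of-LW} to rewrite one of the two factors as a spectral flow of a lowest-weight module: namely $\cD_{1,1}^+\cong\sigma^2(\cD_{1,1}^-)$ when $v=2$ (the $s=v-1$ case of \eqref{eqn:HW-as-SF-of-LW}), and $\cD_{1,1}^+\cong\sigma(\cD_{u-1,v-2}^-)$ when $v\geq 3$ (the $1\leq s\leq v-2$ case). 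Pulling out the spectral flow again via Proposition \ref{prop:tens-prod-and-spec-flow}, this reduces the problem to computing $\cD_{1,1}^+\boxtimes\cD_{1,1}^-$ in the case $v=2$ and $\cD_{1,1}^+\boxtimes\cD_{u-1,v-2}^-$ in the case $v\geq 3$, each of which is handled directly by Theorem \ref{thm:nonzero}.

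For $v=2$ the computation is immediate: Theorem \ref{thm:nonzero} applied with $r=1$ and $s=v-1=1$ gives $\cD_{1,1}^+\boxtimes\cD_{1,1}^-\cong\cD_{1,0}^-=\cL_{1,0}$, so after accounting for the spectral flows accumulated above we obtain $\sigma(\cD_{1,1}^+)\boxtimes\sigma(\cD_{1,1}^+)\cong\sigma^4(\cL_{1,0})$. For $v\geq 3$, I would apply Theorem \ref{thm:nonzero} with $r=u-1$ and $s=v-2$: when $v=3$ this falls into the $s=1$ case and produces $\cL_{u-1,0}\oplus\cE_{-\lambda_{u-1,0},\Delta_{u-1,2}}$, while for $v\geq 4$ it falls into the $2\leq s\leq v-2$ case and produces $\cD_{u-1,v-3}^-\oplus\cE_{-\lambda_{u-1,v-3},\Delta_{u-1,v-1}}$.

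The final step, and the only real bookkeeping obstacle, is to verify that after the spectral flow $\sigma^3$ is applied, both outputs collapse to the same expression $\sigma^2(\cD_{1,2}^+)\oplus\sigma^3(\cE_{\lambda_{1,3},\Delta_{1,1}})$. For the first summand, in the $v\geq 4$ case the identity $\cD_{1,2}^+\cong\sigma(\cD_{u-1,v-3}^-)$ from \eqref{eqn:HW-as-SF-of-LW} gives $\sigma^3(\cD_{u-1,v-3}^-)\cong\sigma^2(\cD_{1,2}^+)$; in the $v=3$ case one instead uses $\cL_{u-1,0}\cong\sigma(\cD_{1,v-1}^-)$ together with $\cD_{1,v-1}^+\cong\sigma^2(\cD_{1,v-1}^-)$ to obtain the same answer $\sigma^2(\cD_{1,2}^+)$. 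For the second summand, the symmetries \eqref{eqn:lambda-Delta-symmetries} give $\Delta_{u-1,v-1}=\Delta_{1,1}$ and $-\lambda_{u-1,v-3}=\lambda_{1,3}+2\equiv\lambda_{1,3}\pmod{2\ZZ}$, while in the $v=3$ case one checks analogously that $\Delta_{u-1,2}=\Delta_{1,1}$ and $-\lambda_{u-1,0}\equiv\lambda_{1,3}\pmod{2\ZZ}$ (here using $t=u/3$ so that $-\lambda_{u-1,0}-\lambda_{1,3}=2-u+3t=2u-2\in 2\ZZ$). Since $\cE_{\lambda,\Delta_{r,s}}$ depends on $\lambda$ only modulo $2\ZZ$ (for generic $\lambda$), this identifies the relaxed module summand with $\sigma^3(\cE_{\lambda_{1,3},\Delta_{1,1}})$ in both sub-cases, completing the proof.
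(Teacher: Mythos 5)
Your proposal is correct and follows essentially the same route as the paper: pull out the spectral flows via Proposition \ref{prop:tens-prod-and-spec-flow}, rewrite $\cD_{1,1}^+$ as $\sigma^2(\cD_{1,1}^-)$ (for $v=2$) or $\sigma(\cD_{u-1,v-2}^-)$ (for $v\geq 3$) using \eqref{eqn:HW-as-SF-of-LW}, apply Theorem \ref{thm:nonzero}, and identify the summands via \eqref{eqn:HW-as-SF-of-LW} and \eqref{eqn:lambda-Delta-symmetries}; your extra care in separating the $v=3$ sub-case is a welcome elaboration of what the paper leaves implicit. One harmless slip: in the $v=3$ check, $-\lambda_{u-1,0}-\lambda_{1,3}=2-u+3t=2$ (not $2u-2$), but either value lies in $2\ZZ$, so the conclusion stands.
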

\begin{proof}
If $v=2$, then $\cD_{1,1}^+=\sigma^2(\cD_{1,1}^-)$ (recall \eqref{eqn:HW-as-SF-of-LW}), so
\begin{equation*}
\sigma(\cD_{1,1}^+)\boxtimes\sigma(\cD_{1,1}^+)\cong\sigma^4(\cD_{1,1}^+\boxtimes\cD_{1,1}^-)\cong\sigma^4(\cL_{1,0})
\end{equation*}
by Proposition \ref{prop:tens-prod-and-spec-flow} and Theorem \ref{thm:nonzero}. If $v\geq 3$, then $\cD_{1,1}^+=\sigma(\cD_{u-1,v-2}^-)$, so 
\begin{align*}
\sigma(\cD_{1,1}^+)\boxtimes\sigma(\cD_{1,1}^+) & \cong\sigma^3(\cD_{1,1}^+\boxtimes\cD_{u-1,v-2}^-)\nonumber\\
& \cong \sigma^3(\cD_{u-1,v-3}^-\oplus\cE_{-\lambda_{u-1,v-3},\Delta_{u-1,v-1}})\cong\sigma^2(\cD_{1,2}^+)\oplus\sigma^3(\cE_{\lambda_{1,3},\Delta_{1,1}}),
\end{align*}
using Proposition \ref{prop:tens-prod-and-spec-flow}, Theorem \ref{thm:nonzero}, \eqref{eqn:HW-as-SF-of-LW}, and \eqref{eqn:lambda-Delta-symmetries}.
\end{proof}

\subsection{The strategy and its illustration in the case \texorpdfstring{$v = 2$}{v=2}}\label{subsec:v=2}

In this subsection and the next, we prove that $\cC^{\mathrm{wt}}_k(\sl_2)$ is rigid for any non-integral admissible level $k=-2+\frac{u}{v}$. For briefer notation, we set $V=L_k(\sl_2)$, $A=\Vir_{c_k}\otimes\Pi(0)$, $\cC=\cC^{\mathrm{wt}}_k(\sl_2)$, and $N= \cF(A)$. The induced module $N$ appears in the following useful lemma, which is a special case of \cite[Lemma 2.8]{CLR}:
\begin{lemma}\label{lemma:CLR}
    For any object $M$ in $\cC_A$, $A\boxtimes\cG(M)\cong\cG(N\boxtimes_A M)$ as objects of $\cC$.
\end{lemma}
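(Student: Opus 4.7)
The plan is to derive this as the $X = A$ specialization of the general projection formula
\[
\cG\bigl(\cF(X) \boxtimes_A M\bigr) \cong X \boxtimes \cG(M), \qquad X \in \cC, \ M \in \cC_A,
\]
which is the content of \cite[Lemma 2.8]{CLR} (once the ambient tensor category framework from \cite{HLZ8, CKM-ext} is in force). Since $N = \cF(A)$ by definition, substituting $X = A$ immediately yields $\cG(N \boxtimes_A M) \cong A \boxtimes \cG(M)$, which is the desired statement.

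To make the argument self-contained, I would establish the projection formula directly. Let $\mu_M : A \boxtimes M \to M$ denote the $A$-action on $M$. Consider the composite
\[
\phi : A \boxtimes X \boxtimes M \xrightarrow{\, c_{A, X} \boxtimes \id_M \,} X \boxtimes A \boxtimes M \xrightarrow{\, \id_X \boxtimes \mu_M \,} X \boxtimes M
\]
in $\cC$. Commutativity of $A$, together with the unit and associativity axioms for the $A$-module $M$, imply that $\phi$ is balanced with respect to the two $A$-actions defining $\cF(X) \boxtimes_A M = (A \boxtimes X) \boxtimes_A M$, so $\phi$ descends to a morphism $\bar\phi : \cG(\cF(X) \boxtimes_A M) \to X \boxtimes \cG(M)$ in $\cC$. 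An inverse is produced by composing $\iota_A \boxtimes \id_X \boxtimes \id_M : X \boxtimes M \to A \boxtimes X \boxtimes M$ with the canonical map to $\cF(X) \boxtimes_A M$; that this gives a two-sided inverse follows from the unit relation $\mu_M \circ (\iota_A \boxtimes \id_M) = \id_M$ together with the balancing identification. Specializing to $X = A$ then completes the proof.

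The main obstacle is that in the vertex algebraic framework the tensor product $\boxtimes_A$ is not literally a coequalizer of morphisms but is instead characterized by a universal property for $A$-module intertwining operators. Consequently, the descent step above cannot be invoked directly from universal algebra; rather, one must construct an explicit $A$-module intertwining operator realizing $\phi$ on the intertwining-operator side and verify the requisite universal property. This is precisely the technical work carried out in \cite[Lemma 2.8]{CLR}, so in practice the cleanest presentation is to cite that reference and record the $X = A$ specialization.
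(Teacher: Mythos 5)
Your proposal matches the paper exactly: the paper offers no independent proof but simply states that the lemma is the special case of \cite[Lemma 2.8]{CLR} obtained by taking $X=A$ in the projection formula $\cG(\cF(X)\boxtimes_A M)\cong X\boxtimes\cG(M)$, which is precisely your main route. Your additional sketch of a direct argument, and your caveat that the vertex-algebraic $\boxtimes_A$ is defined by a universal property for intertwining operators rather than as a literal coequalizer, are both sound but go beyond what the paper records.
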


For any object $X$ in $\cC$, define $u_X: X\rightarrow A\boxtimes X$ to be the composition $(\iota_A\boxtimes\id_X)\circ l_X^{-1}$, where $\iota_A$ is the inclusion of $V$ into $A$ and $l_X$ is the left unit isomorphism. We will need the following lemma:
\begin{lemma}\label{lem:check-4b}
    If $X$ is an object of $\cC$ and there is a $\cC$-embedding $f: X\hookrightarrow M$ for some $M$ in $\cC_A$, then $u_X: X\rightarrow A\boxtimes X$ is injective.
\end{lemma}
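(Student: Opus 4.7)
The approach is a direct application of Frobenius reciprocity (Lemma \ref{reciprocity}). The unit morphism $u_X: X \to \cG(\cF(X)) = A \boxtimes X$ is precisely the unit of the adjunction $\cF \dashv \cG$, so for any object $M$ in $\cC_A$, the bijection
\[
\Hom_{\cC_A}(\cF(X), M) \xrightarrow{\,\sim\,} \Hom_\cC(X, \cG(M))
\]
sends $\widetilde{f}$ to $\cG(\widetilde{f}) \circ u_X$. First I would verify this concrete description of the adjunction isomorphism by unwinding the definitions of $u_X$ and $\cF$ (this is essentially built into the proof of Lemma \ref{reciprocity}, using that $\iota_A: V \to A$ is the unit of the commutative algebra $A$).

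Given the $\cC$-embedding $f: X \hookrightarrow M$ in the hypothesis, I would then invoke the above bijection to produce a $\cC_A$-morphism $\widetilde{f}: A \boxtimes X \to M$ satisfying the factorization
\[
f = \cG(\widetilde{f}) \circ u_X.
\]
Since $f$ is injective by assumption, any element of $\ker u_X$ is automatically in $\ker f$, hence is zero. This forces $u_X$ to be injective, completing the proof.

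There is really no obstacle here: the lemma is essentially a tautological consequence of the adjunction between induction and restriction, and the hypothesis that $X$ embeds into some $A$-module is exactly what is needed to produce a left inverse (up to post-composition with $\cG(\widetilde{f})$) of $u_X$ on the level of injectivity. The only mild subtlety is to keep straight that $f: X \hookrightarrow M$ is an arrow in $\cC$, i.e.\ really a morphism $X \to \cG(M)$, so that Frobenius reciprocity applies.
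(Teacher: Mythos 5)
Your proof is correct and is essentially the paper's argument in adjunction-theoretic packaging: the adjoint $\widetilde{f}$ of $f$ is concretely $\mu_M\circ(\id_A\boxtimes f)$, so your factorization $f=\cG(\widetilde{f})\circ u_X$ is exactly the paper's combination of the naturality square $u_M\circ f=(\id_A\boxtimes f)\circ u_X$ with the unit identity $\mu_M\circ u_M=\id_M$. Both proofs then conclude by the standard fact that the first factor of an injective composite is injective.
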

\begin{proof}
First, if $\mu_M: A\boxtimes M\rightarrow M$ denotes the $A$-action on $M$, then the unit property of objects of $\cC_A$ implies $\mu_M\circ u_M=\id_M$, so $u_M$ is injective. 
Then as the diagram
 \begin{equation*}
\begin{tikzcd}[column sep=3pc]
 X \ar[d, "u_X"] \ar[r, "f"] & \ar[d, "u_M"] M \\ 
 A \boxtimes X \ar[r, "\id_A\boxtimes f"]  & A \boxtimes M    
    \end{tikzcd}
\end{equation*} 
commutes by naturality of the left unit isomorphisms, $u_X$ is also injective.
\end{proof}

Set $Q=\sigma(\cD_{1,1}^+)$, so that by \eqref{eqn:A-as-Lk(sl2)-mod}, we have a non-split exact sequence
\begin{equation}\label{eqn:A-exact-seq-sec-4}
    0\longrightarrow V\xrightarrow{\,\iota_A\,} A\longrightarrow Q\longrightarrow 0
\end{equation}
in $\cC$. Then since $\boxtimes$ is right exact, if $X$ in $\cC$ embeds into some object of $\cC_A$, then Lemma \ref{lem:check-4b} implies that
\begin{equation*}
0 \longrightarrow X \xrightarrow{\,u_X\,} A \boxtimes X \longrightarrow Q \boxtimes X \longrightarrow 0
\end{equation*}
is also exact.

For proving $\cC$ is rigid,
it turns out that the case $v=2$ is much simpler than the $v\geq 3$ case, so we start by presenting the proof strategy and immediately illustrating it in the case $v=2$, that is, $k = -2 + \frac{u}{2}$ for $u \in \ZZ_{\geq 2}$ odd. In this case, the exact sequence \eqref{eqn:A-exact-seq-sec-4} becomes  
\[
0 \longrightarrow V \longrightarrow A \longrightarrow \sigma^2(\cL_{u-1, 0}) \longrightarrow 0.
\]

{\bf Step 1:} \textit{Show that $N=\cF(A)$ has a composition series with all composition factors in $\cC_A^{\loc}$.} When $v=2$, the special case $X = M = A$ of Lemma \ref{lem:check-4b} gives an exact sequence
\begin{equation}\label{eqn:A-times-A-exact-seq-v=2}
0 \longrightarrow A \xrightarrow{\,u_A\,} A \boxtimes A \longrightarrow A \boxtimes  \sigma^2(\cL_{u-1, 0}) \longrightarrow 0.
\end{equation}
This sequence splits because the multiplication map $\mu_A: A\boxtimes A\rightarrow A$ is a one-sided inverse of $u_A$. Moreover, by \cite[Corollary~7.6]{CHY}, $\cL_{u-1, 0}$ is a simple current with $\cL_{u-1, 0} \boxtimes \cL_{u-1, 0} \cong V$, and thus by Proposition \ref{prop:tens-prod-and-spec-flow}, $\sigma^2(\cL_{u-1,0})$ is a simple current as well. In particular, by \cite[Proposition 2.5]{CKLR}, tensoring \eqref{eqn:A-exact-seq-sec-4} with $\sigma^2(\cL_{u-1,0})$ yields another non-split exact sequence
\[
0 \longrightarrow \sigma^2(\cL_{u-1, 0})  \longrightarrow A \boxtimes \sigma^2(\cL_{u-1, 0})  \longrightarrow \sigma^4(V) \longrightarrow 0.
\]
This implies $\sigma^{-3}(A\boxtimes\sigma^2(\cL_{u-1,0}))$ is an indecomposable lower-bounded weight $L_k(\sl_2)$-module with socle $\sigma^{-1}(\cL_{u-1,0})\cong\cD_{1,1}^-$ and top $\sigma(V)\cong\cD_{u-1,1}^+$. The classification of indecomposable lower-bounded weight $L_k(\sl_2)$-modules (recall \eqref{eqn:exact-seq-E+E-}) thus forces $A \boxtimes \sigma^2(\cL_{u-1, 0}) \cong \sigma^3(\cE^-_{1, 1})$. Now \eqref{eqn:A-times-A-exact-seq-v=2} implies
\[
A \boxtimes A \cong A \oplus  \sigma^3(\cE^-_{1, 1}) \cong A\oplus\cG(M^k_{1,1}\otimes\Pi_2(t))
\]
as an object of $\cC$.
By Frobenius reciprocity (Lemma \ref{reciprocity}), for any $r$, $\ell$, and $\lambda$,
\begin{equation}
\begin{split}
\Hom_{\cC_A}(N, M^k_{r, 1} \otimes \Pi_\ell(\lambda)) &\cong \Hom_{\cC}(A, \cG(M^k_{r, 1} \otimes \Pi_\ell(\lambda))) 
\end{split}
\end{equation}
and the latter is non-zero if and only if $M^k_{r, 1} \otimes \Pi_\ell(\lambda)$ is either $A$ or $\tau(Q) = M^k_{1, 1} \otimes \Pi_2(t)$ (recall Remark \ref{rem:onetone} and \eqref{eqn:A-exact-seq-sec-4}). 
It follows that
\begin{equation}\label{eqn:N-v=2}
N \cong A \oplus M^k_{1, 1} \otimes \Pi_2(t)
\end{equation}
as an object of $\cC_A$. In particular, all composition factors of $N$ are objects of $\cC_A^\loc$.

\medskip

{\bf Step 2:} \textit{Show that every simple object in $\cC_A$ is local.} Still assuming $v=2$, let $X$ be a simple object in $\cC_A$. Then the socle of $\cG(X)$ is a simple object of $\cC$ and thus by Remark \ref{rem:onetone} is the simple quotient of $\cG(M^k_{r, 1} \otimes \Pi_{\ell}(\lambda))$ for certain $r$, $\ell$, $\lambda$. Hence
\[
\Hom_{\cC_A}(\cF(\cG(M^k_{r, 1} \otimes \Pi_{\ell}(\lambda))), X) \cong  
\Hom_{\cC}(\cG(M^k_{r, 1} \otimes \Pi_{\ell}(\lambda)), \cG(X)) \neq 0,
\]
so it is enough to show that $\cF(\cG(M^k_{r, 1} \otimes \Pi_{\ell}(\lambda)))$ has a composition series consisting of local modules. Using Lemma \ref{lemma:CLR}, \eqref{eqn:N-v=2}, and \eqref{eqn:A-fusion-rules}, we get
\[
A \boxtimes \cG(M^k_{r, 1} \otimes \Pi_{\ell}(\lambda)) \cong \cG(N \boxtimes_A (M^k_{r, 1} \otimes \Pi_{\ell}(\lambda))) \cong \cG(M^k_{r, 1} \otimes \Pi_{\ell}(\lambda)) \oplus \cG(M^k_{r, 1} \otimes \Pi_{\ell+2}(\lambda+t))
\]
as objects of $\cC$.
Thus because
\[
\Hom_{\cC_A}(\cF(\cG(M^k_{r, 1} \otimes \Pi_{\ell}(\lambda))), M^k_{r, 1} \otimes \Pi_{\ell}(\lambda)) \cong  
\Hom_{\cC}(\cG(M^k_{r, 1} \otimes \Pi_{\ell}(\lambda)), \cG(M^k_{r, 1} \otimes \Pi_{\ell}(\lambda))) \cong \mathbb C
\]
by Frobenius reciprocity, there is a short exact sequence
\[
0 \longrightarrow Y \longrightarrow \cF(\cG(M^k_{r, 1} \otimes \Pi_{\ell}(\lambda))) \longrightarrow M^k_{r, 1} \otimes \Pi_{\ell}(\lambda) \longrightarrow 0 
\]
in $\cC_A$ such that
$\cG(Y) \cong \cG(M^k_{r, 1} \otimes \Pi_{\ell+2}(\lambda+t))$.
Similarly, there is a short exact sequence
\[
0 \longrightarrow Z \longrightarrow \cF(\cG(M^k_{r, 1} \otimes \Pi_{\ell+2}(\lambda+t))) \longrightarrow M^k_{r, 1} \otimes \Pi_{\ell+2}(\lambda+t) \longrightarrow 0 
\]
in $\cC_A$ such that
$\cG(Z) \cong \cG(M^k_{r, 1} \otimes \Pi_{\ell+4}(\lambda+2t))=\cG(M^k_{r,1}\otimes\Pi_{\ell+4}(\lambda))$.

We still need to show that 
$Y \cong M^k_{r, 1} \otimes \Pi_{\ell+2}(\lambda+t)$ as objects of $\cC_A$.
Since 
\begin{equation}\nonumber
\begin{split}
&\Hom_{\cC_A}(\cF(\cG(M^k_{r, 1} \otimes \Pi_{\ell+2}(\lambda+t))), \cF(\cG(M^k_{r, 1} \otimes \Pi_{\ell}(\lambda)) \\
&\qquad\qquad\qquad \cong \Hom_{\cC}(\cG(M^k_{r, 1} \otimes \Pi_{\ell+2}(\lambda+t)), \cG(M^k_{r, 1} \otimes \Pi_{\ell}(\lambda)) \oplus \cG(M^k_{r, 1} \otimes \Pi_{\ell+2}(\lambda+t))) \cong \mathbb C,
\end{split}
\end{equation}
 and since the image of a  non-zero map from $\cF(\cG(M^k_{r, 1} \otimes \Pi_{\ell+2}(\lambda+t)))$ to $\cF(\cG(M^k_{r, 1} \otimes \Pi_{\ell}(\lambda)))$ must be $M^k_{r, 1} \otimes \Pi_{\ell+2}(\lambda+t)$, we get $Y \cong M^k_{r, 1} \otimes \Pi_{\ell+2}(\lambda+t)$.
Moreover, from Frobenius reciprocity,
\begin{equation}\nonumber
\begin{split}
\Hom_{\cC_A}(\cF(\cG(M^k_{r, 1} \otimes \Pi_{\ell}(\lambda))), M^k_{r, 1} \otimes \Pi_{\ell+2}(\lambda+t)) &\cong 
\Hom_{\cC}(\cG(M^k_{r, 1} \otimes \Pi_{\ell}(\lambda)), \cG(M^k_{r, 1} \otimes \Pi_{\ell+2}(\lambda+t))) \\ &\ \cong \begin{cases}
     \CC & \text{if}\,\,\lambda \in \{ \nu_{r, 1}, \nu_{u-r, 1}\} \\ \,0 & \text{otherwise} \\ 
\end{cases}
\end{split}
\end{equation}
 So $\cF(\cG(M^k_{r, 1} \otimes \Pi_{\ell}(\lambda)))$  is indecomposable if and only if $\lambda\neq\nu_{r,1},\nu_{u-r,1}$ mod $\ZZ$, that is, 
 \begin{equation}\label{eqn:F-of-typical-v=2}
0 \longrightarrow M^k_{r, 1} \otimes \Pi_{\ell+2}(\lambda+t) \longrightarrow \cF(\cG(M^k_{r, 1} \otimes \Pi_{\ell}(\lambda))) \longrightarrow M^k_{r, 1} \otimes \Pi_{\ell}(\lambda) \longrightarrow 0 
\end{equation}
is a non-split short exact sequence in $\cC_A$ when $\lambda\neq\nu_{r,1},\nu_{u-r,1}$ mod $\ZZ$. In any case, every simple quotient of $\cF(\cG(M^k_{r, 1} \otimes \Pi_{\ell}(\lambda)))$ is local for any $r$, $\ell$, $\lambda$, and thus every simple object of $\cC_A$ is local.

Step 2 and Theorem \ref{thm:CAloc-rigid} now show that condition (1) of Theorem \ref{rigidity-of-C-from-CAloc} holds in the case $\cC=\cC^{\mathrm{wt}}_k(\sl_2)$, $A=\Vir_{c_k}\otimes\Pi(0)$ for $k=-2+\frac{u}{2}$, $u$ odd. It then follows from \cite[Theorem 3.14]{CMSY} that $\cC_A$ is rigid.

\medskip

{\bf Step 3:} \textit{Show that if $X$ in $\cC$ is simple, then any non-zero $\cC_A$-morphism from $\cF(X')$ to $\cF(X)^*$ is an isomorphism.} Continuing to assume $v=2$, first consider the typical case $X=\sigma^{\ell+1}(\cE_{2\lambda-k,\Delta_{r,s}})\cong\cG(M^k_{r,1}\otimes\Pi_\ell(\lambda))$ for $1\leq r\leq u-1$, $\ell\in\ZZ$, and $\lambda \neq \nu_{r, 1},\nu_{u-r,1}$ mod $\ZZ$. The dual of $M^k_{r,1}\otimes\Pi_\ell(\lambda)$ in $\cC_A$ is $M^k_{r,1}\otimes\Pi_{-\ell}(-\lambda)$ by the 
self-duality of $M^k_{r,1}$ in $\cC^\Vir_k$,
the simple current fusion rules \eqref{eqn:P(0)-fusion-rules} for weight $\Pi(0)$-modules, and the
result in Theorem \ref{thm:CAloc-rigid} that $\cC_A^\loc$ is tensor equivalent to $\cC^\Vir_k\boxtimes\cC^{\mathrm{wt}}_{\Pi(0)}$. Thus \eqref{eqn:F-of-typical-v=2} implies that there is a non-split short exact sequence
\[
0 \longrightarrow M_{r,1}^k\otimes \Pi_{-\ell}(-\lambda) \longrightarrow 
\cF(\cG(M_{r,1}^k\otimes \Pi_{\ell}(\lambda)))^* \longrightarrow    M_{r,1}^k\otimes \Pi_{-\ell -2}(-\lambda-t)\longrightarrow 0.
\]
On the other hand, 
$\cG(M_{r,1}^k\otimes \Pi_{\ell}(\lambda))'  \cong \cG(M_{r,1}^k\otimes \Pi_{-\ell-2}(t-\lambda))$ by \eqref{eq:contragedientdual typical},
so \eqref{eqn:F-of-typical-v=2} also yields a non-split short exact sequence
\[
0 \longrightarrow M_{r,1}^k\otimes \Pi_{-\ell}(2t-\lambda) \longrightarrow 
\cF(\cG(M_{r,1}^k\otimes \Pi_{\ell}(\lambda))') \longrightarrow    M_{r,1}^k\otimes \Pi_{-\ell -2}(t-\lambda)\rightarrow 0.
\]
Since $2t=u\in\ZZ$, it follows that $\cF(\cG(M_{r,1}^k\otimes \Pi_{\ell}(\lambda))')$ and $\cF(\cG(M_{r,1}^k\otimes \Pi_{\ell}(\lambda)))^*$ are both indecomposable modules with the same composition factors. Thus if a $\cC_A$-morphism $f: \cF(\cG(M_{r,1}^k\otimes \Pi_{\ell}(\lambda))')\rightarrow \cF(\cG(M_{r,1}^k\otimes \Pi_{\ell}(\lambda)))^*$ is non-zero, then $f$ must be an isomorphism, because otherwise the image of $f$ would be isomorphic to $M^k_{r,1}\otimes\Pi_{-\ell-2}(t-\lambda)$, which is not a submodule of $\cF(\cG(M_{r,1}^k\otimes \Pi_{\ell}(\lambda)))^*$.

It now follows that $\cF(\cG(M_{r,1}^k\otimes \Pi_{\ell}(\lambda))')$ and $\cF(\cG(M_{r,1}^k\otimes \Pi_{\ell}(\lambda)))^*$ are indeed isomorphic in $\cC_A$, because the proof of \cite[Theorem 6.11]{MY2} (see also \cite[Theorem 3.16]{CMSY}) shows that a non-zero morphism between these modules exists.

Now we consider the atypical case $X=\sigma^{\ell}(\cD_{r,1}^+)$ for $1\leq r\leq u-1$ and $\ell\in\ZZ$. Now $\sigma^\ell(\cD_{r,1}^+)$ is a homomorphic image of $\sigma^\ell(\cE_{u-r,1}^-)\cong\cG(M^k_{r,1}\otimes\Pi_{\ell-1}(\nu_{r,1})$,
so because the exact sequence \eqref{eqn:F-of-typical-v=2} splits in this case, we get a $\cC_A$-surjection
\[
\cF(\cG(M_{r,1}^k\otimes \Pi_{\ell-1}(\nu_{r,1}))) \cong M_{r,1}^k\otimes \Pi_{\ell-1}(\nu_{r,1}) \oplus M_{r,1}^k\otimes \Pi_{\ell +1}(\nu_{u-r,1}) \twoheadrightarrow \cF(\sigma^\ell(\cD_{r,1}^+)).
\]
Thus $\cF(\sigma^\ell(\cD_{r,1}^+))$ is semisimple with at most two composition factors. By Frobenius reciprocity,
\begin{equation*}
    \Hom_{\cC_A}(\cF(\sigma^\ell(\cD_{r,1}^+)), M^k_{r',1}\otimes\Pi_{\ell'}(\lambda))\cong\Hom_\cC(\sigma^\ell(\cD_{r,1}^+), \cG(M^k_{r',1}\otimes\Pi_{\ell'}(\lambda))),
\end{equation*}
and the latter space is non-zero only for $r'=r$ (or $u-r$), $\ell'=\ell+1$, and $\lambda=\nu_{u-r,1}$ mod $\ZZ$.
Thus $\cF(\sigma^{\ell}(\cD_{r,1}^+)) \cong M_{r,1}^k\otimes \Pi_{\ell+1}(\nu_{u-r,1})$, and then
\[
\cF(\sigma^\ell(\cD_{r,1}^+))^* \cong M_{r,1}^k\otimes \Pi_{-\ell-1}(\nu_{u-r,1}).
\]
On the other hand, using \eqref{eqn:HW-as-SF-of-LW},
\begin{align*}
    \cF(\sigma^\ell(\cD_{r,1}^+)')  \cong \cF(\sigma^{-\ell}(\cD_{r,1}^-)) \cong \cF(\sigma^{-\ell-2}(\cD_{r,1}^+)) \cong M_{r,1}^k \otimes \Pi_{-\ell - 1}(\nu_{u-r,1})
\end{align*}
as well.
So $\cF(\sigma^\ell(\cD_{r,1}^+)')$ and $\cF(\sigma^\ell(\cD_{r,1}^+))^*$ are isomorphic simple objects of $\cC_A$, and thus any non-zero map between them is an isomorphism.

Step 3 is enough to verify condition (2) of Theorem \ref{rigidity-of-C-from-CAloc} in the case $\cC=\cC^{\mathrm{wt}}_k(\sl_2)$, $A=\Vir_{c_k}\otimes\Pi(0)$ for $k=-2+\frac{u}{2}$, $u$ odd, since the evaluation map $e_X: X'\boxtimes X\rightarrow V$ is non-zero for any simple object $X$ in $\cC$, and thus surjective since $V$ is simple.

\medskip

{\bf Step 4:} \textit{Verify the non-degeneracy condition (3) of Theorem \ref{rigidity-of-C-from-CAloc}.} This has already been done in Corollary \ref{cor:condition-3-for-sl2}.

\medskip

After completing Steps 1 through 4, all conditions of Theorem \ref{rigidity-of-C-from-CAloc} are verified, so we conclude:
\begin{theorem}
The tensor category $\cC^{\mathrm{wt}}_k(\sl_2)$ is rigid for $k=-2+\frac{u}{2}$, $u$ odd.
\end{theorem}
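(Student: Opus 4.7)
The plan is to verify the three conditions of Theorem \ref{rigidity-of-C-from-CAloc} for $\cC = \cC^{\mathrm{wt}}_k(\sl_2)$ with commutative algebra $A = \Vir_{c_k} \otimes \Pi(0)$, following the four-step schema outlined above. The preliminary hypotheses are automatic: $\theta_A^2 = \id_A$ since $A$ is $\frac{1}{2}\ZZ$-graded by conformal weight, and $\iota_A \colon V \hookrightarrow A$ is injective since $V = L_k(\sl_2)$ is simple. Condition (3) is already Corollary \ref{cor:condition-3-for-sl2}, so only conditions (1) and (2) require work.

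For condition (1), rigidity and semisimplicity of $\cC_A^{\loc}$ are handed to us by Theorem \ref{thm:CAloc-rigid}, so the content is locality of every simple object of $\cC_A$. My first task is to identify $N = \cF(A)$ explicitly: tensoring \eqref{eqn:A-as-Lk(sl2)-mod} with $A$ produces a short exact sequence that splits via the multiplication $\mu_A$, and the simple-current nature of $\sigma^2(\cL_{u-1,0})$ (Proposition \ref{prop:tens-prod-and-spec-flow} together with \cite[Corollary~7.6]{CHY}) combined with the classification of indecomposable lower-bounded weight modules forces $A \boxtimes \sigma^2(\cL_{u-1,0}) \cong \sigma^3(\cE_{1,1}^-)$. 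Frobenius reciprocity then pins down $N \cong A \oplus \bigl(M^k_{1,1} \otimes \Pi_2(t)\bigr)$ in $\cC_A$. Second, Lemma \ref{lemma:CLR} converts this into a length-two filtration by local modules of each induction $\cF(\cG(M^k_{r,1} \otimes \Pi_\ell(\lambda)))$; since every simple object of $\cC_A$ is a quotient of some such induction (by Remark \ref{rem:onetone} and Frobenius reciprocity), condition (1) follows.

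For condition (2), surjectivity of $e_X$ is automatic since $V$ is simple and $e_X$ is non-zero. For the isomorphism $\cF(X') \cong \cF(X)^*$, I would split on the typical/atypical dichotomy of Remark \ref{rem:onetone}. In the typical case, both sides are length-two indecomposable modules with identical composition series, the crucial arithmetic input being $2t = u \in \ZZ$, so any non-zero map between them must be an isomorphism. In the atypical case $X = \sigma^{\ell}(\cD^+_{r,1})$, analysing \eqref{eqn:F-of-typical-v=2} together with the spectral-flow identity \eqref{eqn:HW-as-SF-of-LW} shows that both $\cF(X')$ and $\cF(X)^*$ collapse to the same simple module $M^k_{r,1} \otimes \Pi_{-\ell-1}(\nu_{u-r,1})$. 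Non-vanishing of a candidate morphism in each case is supplied by the general argument from the proof of \cite[Theorem 3.16]{CMSY}.

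The main technical bottleneck of the plan is Step 1: without the explicit identification of $N = \cF(A)$, there is no leverage to force locality of the simple objects of $\cC_A$. The $v = 2$ assumption makes this step essentially painless, because $\sigma^2(\cL_{u-1,0})$ is a simple current and so $N$ is forced to be a direct sum of two simple modules rather than a non-split extension; the same $2t \in \ZZ$ arithmetic then makes the typical-case matching in Step 3 immediate by direct comparison of Heisenberg weights. With all three conditions of Theorem \ref{rigidity-of-C-from-CAloc} in place, the theorem follows.
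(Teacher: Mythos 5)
Your proposal is correct and follows essentially the same route as the paper's own proof of the $v=2$ case: the identical four-step verification of Theorem \ref{rigidity-of-C-from-CAloc}, the same identification $N \cong A \oplus (M^k_{1,1}\otimes\Pi_2(t))$ via the simple-current property of $\sigma^2(\cL_{u-1,0})$ and Frobenius reciprocity, the same use of Lemma \ref{lemma:CLR} to establish locality of simple objects of $\cC_A$, and the same typical/atypical analysis (with the key fact $2t = u\in\ZZ$) for matching $\cF(X')$ with $\cF(X)^*$. No gaps.
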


\subsection{The general case \texorpdfstring{$v \geq 3$}{v>2}}\label{subsec:v>2}

To prove that $\cC=\cC^{\mathrm{wt}}_k(\sl_2)$ is rigid for $k=-2+\frac{u}{v}$ with $v\geq 3$, we use the same four step strategy as in the previous subsection, where again Step 4 is already covered by Corollary \ref{cor:condition-3-for-sl2}. For Step 1, we show that $N:=\cF(A)$ has a composition series of local modules in the next lemma:

\begin{lemma}\label{lemma:Ncomposition}
As objects of $\cC_A$, $N\cong A \oplus M$, where $M$ fits into a non-split short exact sequence
\[
0 \longrightarrow  M^k_{1, 1} \otimes \Pi_2(-t) \longrightarrow M \longrightarrow  M^k_{1, 2} \otimes \Pi_1(-t/2) \longrightarrow 0.
\]
In particular, the three composition factors  $M^k_{1, 1} \otimes \Pi_0(0)$, $M^k_{1, 2} \otimes \Pi_1(-\frac{t}{2})$, $M^k_{1, 1} \otimes \Pi_2(-t)$ of $N$ are local.
\end{lemma}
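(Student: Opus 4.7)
The plan is to first split off a copy of $A$ from $N$ using the algebra structure, and then to analyze the complementary $\cC_A$-summand $M := \cF(Q)$ (where $Q = \sigma(\cD_{1,1}^+)$) via Corollary \ref{cor:fusionofd11} together with Frobenius reciprocity. Applying the right-exact functor $\cF$ to the non-split sequence \eqref{eqn:A-as-Lk(sl2)-mod} yields an exact sequence $A \xrightarrow{\cF(\iota_A)} N \to \cF(Q) \to 0$ in $\cC_A$, and $\mu_A\colon N = A\boxtimes A \to A$ retracts $\cF(\iota_A)$ by the unit axiom. Hence $\cF(\iota_A)$ is split injective and $N \cong A \oplus M$ in $\cC_A$.

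To determine the structure of $M$, first compute $\cG(M) = A\boxtimes Q$ as a $V$-module. By Remark \ref{rem:onetone}, $Q$ embeds into $\cG(M^k_{1,2}\otimes\Pi_1(-t/2))$, so Lemma \ref{lem:check-4b} gives a short exact sequence $0 \to Q \to \cG(M) \to Q\boxtimes Q \to 0$, and Corollary \ref{cor:fusionofd11} yields $Q\boxtimes Q \cong \sigma^2(\cD_{1,2}^+) \oplus \sigma^3(\cE_{\lambda_{1,3},\Delta_{1,1}})$ for $v \geq 3$. Frobenius reciprocity then produces a nonzero---and hence surjective---$\cC_A$-morphism $p\colon M \to M^k_{1,2}\otimes\Pi_1(-t/2)$, because
\[
\Hom_{\cC_A}(M, M^k_{1,2}\otimes\Pi_1(-t/2)) \cong \Hom_\cC(Q, \sigma^2(\cE^-_{u-1,v-2})) \cong \CC,
\]
the last isomorphism holding since $Q$ is the simple socle of $\sigma^2(\cE^-_{u-1,v-2}) = \cG(M^k_{1,2}\otimes\Pi_1(-t/2))$. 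Comparing $V$-composition factors along $\cG$ of the exact sequence $0\to K\to M\to M^k_{1,2}\otimes\Pi_1(-t/2)\to 0$, the kernel $K:=\ker p$ has $\cG(K) \cong \sigma^3(\cE_{\lambda_{1,3},\Delta_{1,1}})$, which is simple in $\cC$ because $\lambda_{1,3} = -3t \not\equiv \pm\lambda_{1,1} \pmod{2\ZZ}$ when $v\geq 3$. Consequently $K$ is simple in $\cC_A$ with typical $V$-restriction, and is identified via Theorem \ref{thm:CAloc-rigid} with the local module $M^k_{1,1}\otimes\Pi_2(-t)$ having the same restriction.

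Non-splitness of the short exact sequence $0 \to M^k_{1,1}\otimes\Pi_2(-t) \to M \to M^k_{1,2}\otimes\Pi_1(-t/2) \to 0$ is then a second Frobenius reciprocity computation:
\[
\Hom_{\cC_A}(M, M^k_{1,1}\otimes\Pi_2(-t)) \cong \Hom_\cC(Q, \sigma^3(\cE_{\lambda_{1,3},\Delta_{1,1}})) = 0,
\]
since $Q$ and $\sigma^3(\cE_{\lambda_{1,3},\Delta_{1,1}})$ are non-isomorphic simple $V$-modules, so no $\cC_A$-projection $M \to M^k_{1,1}\otimes\Pi_2(-t)$ can exist. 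The main technical point is the identification of the simple $A$-module $K$ with the specific local object $M^k_{1,1}\otimes\Pi_2(-t)$: \emph{a priori} $K$ could be a non-local simple $A$-module sharing the same $V$-restriction, so one must either invoke a uniqueness argument for the $A$-module structure on the simple typical $V$-module $\sigma^3(\cE_{\lambda_{1,3},\Delta_{1,1}})$, or verify locality of $K$ directly via the monodromy calculation \eqref{eqn:monodromy} used in the proof of Theorem \ref{thm:Pi(0)-tens-cat-properties}.
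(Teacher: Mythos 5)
Your proposal follows essentially the same route as the paper: split off $A$ via the unit/multiplication retraction, compute $\cG(M)=A\boxtimes Q$ from Lemma \ref{lem:check-4b} and Corollary \ref{cor:fusionofd11}, produce the surjection onto $M^k_{1,2}\otimes\Pi_1(-t/2)$ and the non-splitness by Frobenius reciprocity, and identify the kernel $K$ by its restriction. The one step you flag but do not close --- ruling out that $K$ is a non-local simple $A$-module with restriction $\sigma^3(\cE_{\lambda_{1,3},\Delta_{1,1}})$ --- is exactly where the paper supplies the missing argument, and it is the first of your two suggested options rather than a monodromy computation: since $\cG(K)$ is simple in $\cC$, the ribbon twist $\theta_{\cG(K)}=e^{2\pi i L_0}$ acts by a non-zero scalar and is therefore a $\cC_A$-endomorphism of $K$, which forces $K$ to be local (by \cite[Lemma 2.81]{CKM-ext}); Theorem \ref{thm:CAloc-rigid} then shows that a simple local $A$-module is determined up to isomorphism by its restriction, giving $K\cong M^k_{1,1}\otimes\Pi_2(-t)$. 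With that step filled in, your argument is complete and matches the paper's.
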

\begin{proof}
Tensoring \eqref{eqn:A-exact-seq-sec-4} with $A$ yields the right exact sequence
\begin{equation*}
    A\xrightarrow{\cF(\iota_A)\circ r_A^{-1}} A\boxtimes A\longrightarrow A\boxtimes\sigma(\cD_{1,1}^+)\longrightarrow 0
\end{equation*}
in $\cC_A$, where $\cF(\iota_A)=\id_A\boxtimes\iota_A$ and $r_A$ is the right unit isomorphism in $\cC$. In fact, this sequence is exact and splits because $\mu_A\circ\cF(\iota_A)\circ r_A^{-1}=\id_A$ by the right unit property of commutative algebras in $\cC$, where $\mu_A: A\boxtimes A\rightarrow A$ is the algebra multiplication, which is a morphism in $\cC_A$. Thus it remains to show that $M:=\cF(\sigma(\cD_{1,1}^+))$ has the structure indicated in the lemma statement.

Because $\sigma(\cD_{1,1}^+) \cong \sigma^2(\cD^{-}_{u-1,v-2})$ embeds into $\sigma^2(\cE^{-}_{u-1,v-2})$, Frobenius reciprocity yields
\begin{equation}\nonumber
\begin{split}
\Hom_{\cC_A}(\cF(\sigma(\cD_{1,1}^+)), M^k_{1, 2} \otimes \Pi_1(-t/2)) &\cong \Hom_{\cC}(\sigma(\cD_{1,1}^+), \cG(M^k_{1, 2} \otimes \Pi_1(-t/2))) \\ 
&\cong \Hom_{\cC}(\sigma(\cD_{1,1}^+),\sigma^2(\cE_{u-1,v-2}^-)) \cong \CC.
\end{split}
\end{equation}
Thus $M^k_{1, 2} \otimes \Pi_1(-\frac{t}{2})$ is a simple quotient of $\cF(\sigma(\cD_{1,1}^+))$; we still need to show that this induced module also has $M^k_{1, 1} \otimes \Pi_2(-t)$ as a composition factor. Note that $\cG(M^k_{1, 1} \otimes \Pi_2(-t)) \cong \sigma^3(\cE_{\lambda_{1,3}, \Delta_{1,1}})$.

We tensor the exact sequence \eqref{eqn:A-exact-seq-sec-4} with $\sigma(\cD_{1,1}^+)$ and then apply Lemma \ref{lem:check-4b} and Corollary \ref{cor:fusionofd11} to obtain an exact sequence
\[
0\longrightarrow\sigma(\cD_{1,1}^+) \longrightarrow \cF(\sigma(\cD_{1,1}^+)) \longrightarrow \sigma^2(\cD_{1,2}^+)\oplus\sigma^3(\cE_{\lambda_{1,3}, \Delta_{1,1}}) \longrightarrow 0
\]
in $\cC$.
Since $\sigma(\cD_{1,1}^+)$ and $\sigma^2(\cD_{1,2}^+)$ are the composition factors of $\sigma^2(\cE_{u-1,v-2}^-)\cong\cG(M_{1,2}^k\otimes\Pi_1(-\frac{t}{2}))$, the kernel $K$ of the $\cC_A$-surjection $\cF(\sigma(\cD_{1,1}^+))\twoheadrightarrow M_{1,2}^k\otimes\Pi_1(-\frac{t}{2})$
satisfies
\begin{equation*}
    \cG(K)\cong \sigma^3(\cE_{\lambda_{1,3}, \Delta_{1,1}}) \cong \cG(M^k_{1, 1} \otimes \Pi_2(-t)).
\end{equation*}
Moreover, the exact sequence
\begin{equation*}
 0\longrightarrow K\longrightarrow\cF(\sigma(\cD_{1,1}^+))\longrightarrow M_{1,2}^k\otimes\Pi_1(-t/2)\longrightarrow 0
\end{equation*}
does not split in $\cC_A$ because by Frobenius reciprocity,
\begin{equation*}
    \Hom_{\cC_A}(\cF(\sigma(\cD_{1,1}^+)), K) \cong\Hom_\cC(\sigma(\cD_{1,1}^+),\cG(K))\cong\Hom_\cC(\sigma(\cD_{1,1}^+), \sigma^3(\cE_{\lambda_{1,3}, \Delta_{1,1}})) = 0.
\end{equation*} 
Finally, we show that $K\cong M^k_{1,1}\otimes\Pi_2(-t)$ as objects of $\cC_A$.
Indeed, since $\cG(K)$ is a simple object of $\cC$, its ribbon twist $\theta_{\cG(K)} =e^{2\pi i L_0}$ is a non-zero scalar and hence is a $\cC_A$-endomorphism of $K$. This implies $K$ is local (see for example \cite[Lemma 2.81]{CKM-ext}),
and Theorem \ref{thm:CAloc-rigid} gives a full classification of simple local $A$-modules. This classification implies in particular that
    two simple local $A$-modules that restrict to the same object of $\cC$ are isomorphic as $A$-modules. Thus $K\cong M^k_{1,1}\otimes\Pi_2(-t)$, as required.
\end{proof}

For Step 2, we prove the following theorem:
\begin{theorem}\label{thm:simple-local}
    All simple objects of $\cC_A$ are local.
\end{theorem}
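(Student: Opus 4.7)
The plan is to follow the same two-step template used in Section~\ref{subsec:v=2}: first reduce to showing that every composition factor of $\cF(\cG(M))$ lies in $\cC_A^{\loc}$ for each simple $M\in\cC_A^{\loc}$, then carry this out by tensoring the composition series of $N$ provided by Lemma~\ref{lemma:Ncomposition}. For the reduction, let $X$ be any simple object of $\cC_A$. Then $\cG(X)$ is a non-zero object in the locally finite category $\cC$, so it contains a simple $\cC$-submodule $Y$. By Remark~\ref{rem:onetone}, there is a simple local $\tilde\tau(Y)\in\cC_A^{\loc}$ equipped with a $\cC$-surjection $\cG(\tilde\tau(Y))\twoheadrightarrow Y$, which composed with $Y\hookrightarrow\cG(X)$ gives a non-zero $\cC$-morphism $\cG(\tilde\tau(Y))\to\cG(X)$. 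Frobenius reciprocity (Lemma~\ref{reciprocity}) lifts this to a non-zero, hence surjective, $\cC_A$-morphism $\cF(\cG(\tilde\tau(Y)))\twoheadrightarrow X$, so $X$ is a simple quotient of $\cF(\cG(\tilde\tau(Y)))$.

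For the main step I would use the standard $\cC_A$-isomorphism $\cF(\cG(M))\cong N\boxtimes_A M$, which follows from $\cF(\cG(M))=A\boxtimes M\cong (A\boxtimes A)\boxtimes_A M=N\boxtimes_A M$ via the canonical identification $A\boxtimes_A M\cong M$. Lemma~\ref{lemma:Ncomposition} gives a three-step composition series of $N$ whose successive subquotients are $A$, $M_{1,2}^k\otimes\Pi_1(-\tfrac{t}{2})$, and $M_{1,1}^k\otimes\Pi_2(-t)$, all of which lie in $\cC_A^{\loc}$. Applying the right-exact functor $(-)\boxtimes_A M$ to this series yields a filtration of $N\boxtimes_A M$ whose successive subquotients are quotients of the three modules $Z\boxtimes_A M$, where $Z$ runs through the composition factors of $N$. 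Since $M\in\cC_A^{\loc}$ and $\cC_A^{\loc}$ is a tensor subcategory of $\cC_A$, each $Z\boxtimes_A M$ lies in $\cC_A^{\loc}$; by the semisimplicity of $\cC_A^{\loc}$ (Theorem~\ref{thm:CAloc-rigid}) and the fusion rules~\eqref{eqn:A-fusion-rules}, each is an explicit finite direct sum of simple local modules. Thus every composition factor of $\cF(\cG(M))$ is simple local, and in particular the simple quotient $X$ is local, as required.

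I do not expect a significant obstacle: the argument is essentially formal once Lemma~\ref{lemma:Ncomposition} is in hand. The one point that requires a moment's care is the $\cC_A$-level identification $\cF(\cG(M))\cong N\boxtimes_A M$, which is a standard consequence of the induction-restriction adjunction for a commutative algebra $A$ but is not explicitly recorded in the paper. If this is to be avoided, one can instead work layer-by-layer with the filtration of $\cF(\cG(M))$ induced by the composition series of $N$, combining iterated Frobenius reciprocity with Lemma~\ref{lemma:CLR} to identify each successive subquotient in $\cC_A$, exactly parallel to the bookkeeping already carried out in the $v=2$ case of Section~\ref{subsec:v=2}. Either route reduces the theorem to the semisimple fusion calculations in $\cC_A^{\loc}$, and no new input beyond Lemma~\ref{lemma:Ncomposition} is required.
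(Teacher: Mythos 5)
Your reduction step is fine and matches the paper's setup: every simple $X$ in $\cC_A$ is a quotient of $\cF(\cG(M))$ for a suitable simple local $M$, by Remark \ref{rem:onetone} and Frobenius reciprocity. The gap is in the main step, where you assert the $\cC_A$-isomorphism $\cF(\cG(M))\cong N\boxtimes_A M$ as a standard consequence of the induction--restriction adjunction. It is not: the adjunction, via Lemma \ref{lemma:CLR}, only identifies $\cG(\cF(\cG(M)))$ with $\cG(N\boxtimes_A M)$ as objects of $\cC$. As $A$-modules, $\cF(\cG(M))$ is $A\boxtimes\cG(M)$ with $A$ acting through the left tensor factor, while $N\boxtimes_A M$ is the same underlying object of $\cC$ with $A$ acting through $M$; these are genuinely different module structures, and your manipulation $(A\boxtimes A)\boxtimes_A M\cong A\boxtimes(A\boxtimes_A M)$ does not track which action survives the coequalizer. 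In fact the claimed isomorphism is false in the present setting: since $N\cong A\oplus M_0$ splits by Lemma \ref{lemma:Ncomposition}, $N\boxtimes_A M\cong M\oplus(M_0\boxtimes_A M)$ always has $M$ as a direct summand, whereas Theorem \ref{thm:indsimple1} shows that for typical $M$ the module $\cF(\cG(M))\cong R_{r,s,\lambda,\ell}$ is indecomposable, with $M$ occurring only as its simple top.

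One might hope to salvage the argument by comparing composition factors rather than isomorphism classes, but knowing $\cG(\cF(\cG(M)))\cong\cG(N\boxtimes_A M)$ in $\cC$ does not determine the $\cC_A$-composition factors of $\cF(\cG(M))$: a priori a non-local simple $A$-module could occur whose restriction to $\cC$ coincides with, or is an extension involving, the restrictions of the local factors of $N\boxtimes_A M$, and ruling this out is exactly the content of the theorem, so this step is circular. Your fallback ``layer-by-layer'' suggestion has the same problem, since the filtration of $N$ lives in $\cC_A$ and only transfers to $\cF(\cG(M))$ through the isomorphism in question. The paper's proof avoids this entirely: it argues by contradiction, counting the multiplicity of the simple $\cC$-submodule $Z\subseteq\cG(X)$ in $\cG(\cF(Z))$ (computed purely in $\cC$ via Lemma \ref{lemma:CLR}, Lemma \ref{lemma:Ncomposition}, exactness of $-\boxtimes_A Y$ for $Y$ rigid in $\cC_A^{\loc}$, and the fusion rules \eqref{eqn:A-fusion-rules}), and showing that a non-local simple quotient of $\cF(Z)$ would force this multiplicity to exceed its actual value ($1$ in the typical case, $2$ in the atypical case). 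That multiplicity count uses only the $\cC$-level isomorphism and is where the real work of the proof lies; it is the ingredient missing from your proposal.
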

\begin{proof}
    Let $X$ be a simple object in $\cC_A$, and let $Z$ be a simple submodule of $\cG(X)$. We prove that $X$ is local in the two cases that $Z$ is typical or atypical.

    \textbf{Case 1:} If $Z$ is typical, then there exists a simple object $Y$ in $\cC_A^{\loc}$ such that $\cG(Y) \cong Z$. We will prove by contradiction that $X\cong Y$, and thus $X$ is local.
    Since
        \[
        \Hom_{\cC}(Z, \cG(X)) \neq 0,\qquad \Hom_{\cC}(Z, \cG(Y)) \neq 0,
        \]
        by Frobenius reciprocity, there exist surjective $\cC_A$-morphisms
        \[
        \cF(Z) \longrightarrow X,\qquad \cF(Z) \longrightarrow Y.
        \]
Thus if $X \ncong Y$, then there is a surjective $\cC_A$-morphism
        \[
        \cF(Z) \longrightarrow X \oplus Y.
        \]
        Because the restriction functor $\cG$ is exact, there is also a $\cC$-surjection
        \[
        \cG(\cF(Z)) \longrightarrow \cG(X) \oplus \cG(Y),
        \]
        which implies that the multiplicity of $Z$ in $\cG(\cF(Z))$ is at least $2$.

        On the other hand, by Lemma \ref{lemma:CLR}, $\cG(\cF(Z)) \cong A\boxtimes\cG(Y)\cong\cG(N\boxtimes_A Y)$ where $N=\cF(A)$. Then 
        \begin{equation*}
            N\boxtimes_A Y = Y\oplus(M\boxtimes_A Y)
        \end{equation*}
        by Lemma \ref{lemma:Ncomposition}, where $M\boxtimes_A Y$ fits into the short exact sequence
        \begin{equation*}
            0\longrightarrow (M^k_{1,1}\otimes\Pi_2(-t))\boxtimes_A Y\longrightarrow M\boxtimes_A Y\longrightarrow (M^k_{1,2}\otimes\Pi_1(-t/2))\boxtimes_A Y\longrightarrow 0;
        \end{equation*}
        note that $Y$ is flat in $\cC_A$ because it is in the rigid subcategory $\cC_A^\loc$.   
Thus the multiplicity of $Z$ in 
        $\cG(\cF(Z))$ is the sum of the multiplicities of $Z$ in $\cG(Y)$, $\cG((M^k_{1,1}\otimes\Pi_2(-t)) \boxtimes_A Y)$, and $\cG((M^k_{1,2}\otimes\Pi_1(-\frac{t}{2}))\boxtimes_A Y)$. 
        
        We may assume $Z = \sigma^\ell(\cE_{\lambda, \Delta_{r,s}})$ where $\lambda \neq \pm \lambda_{r,s}$ mod $2\ZZ$, so that $Y = M_{r,s}^k\otimes \Pi_{\ell-1}(\frac{\lambda+k}{2})$. Then
        \begin{align*}
        \cG((M^k_{1,1}\otimes\Pi_2(-t)) \boxtimes_A Y) & \cong \sigma^{\ell+2}(\cE_{\lambda-2t, \Delta_{r,s}}),\\
        \cG((M^k_{1,2}\otimes\Pi_1(-t/2))\boxtimes_A Y) & \cong 
        \begin{cases}
        \sigma^{\ell+1}(\cE_{\lambda-t,\Delta_{r,2}}) & \text{if}\;\; s=1\\
        \sigma^{\ell+1}(\cE_{\lambda -t, \Delta_{r,s-1}}) \oplus \sigma^{\ell+1}(\cE_{\lambda -t, \Delta_{r,s+1}}) &\text{if}\;\; 2\leq s \leq v-2\\
        \sigma^{\ell+1}(\cE_{\lambda -t, \Delta_{r,v-2}}) & \text{if}\;\; s = v-1
        \end{cases} .
        \end{align*}
        None of these has $Z$ as a composition factor, so the multiplicity of $Z$ in $\cG(\cF(Z))$ is only $1$, contradicting the assumption $X\ncong Y$. Thus $X \cong Y$ as an $A$-module, and $X$ is local.

        \textbf{Case 2:} If $Z$ is atypical, then by Remark \ref{rem:onetone}, there are non-isomorphic simple modules $Y_1, Y_2$ in $\cC_A^{\loc}$ with non-split exact sequences
        \[
        0 \longrightarrow \text{soc}(\cG(Y_1)) \longrightarrow \cG(Y_1) \longrightarrow Z \longrightarrow 0,
        \]
        \[
        0 \longrightarrow Z \longrightarrow \cG(Y_2) \longrightarrow \text{top}(\cG(Y_2)) \longrightarrow 0.
        \]
We will prove that $X\cong Y_2$ by contradiction, thus showing $X$ is local. Now,
        \[
        \Hom_{\cC}(\cG(Y_1), \cG(Y_1)) \neq 0,\quad \Hom_{\cC}(\cG(Y_1), \cG(Y_2)) \neq 0,\quad \Hom_{\cC}(\cG(Y_1), \cG(X)) \neq 0,
        \]
so by Frobenius reciprocity, there are surjective $\cC_A$-morphisms from $\cF(\cG(Y_1))$ to $Y_1$, $Y_2$, and $X$. Thus if $X \ncong Y_2$ (and $X\ncong Y_1$ automatically since $\Hom_\cC(Z,\cG(Y_1))=0$), then there is a surjective $\cC_A$-morphism 
        \[
        \cF(\cG(Y_1)) \longrightarrow X \oplus Y_1 \oplus Y_2,
        \]
        implying that $Z$ has multiplicity at least $3$ in $\cG(\cF(\cG(Y_1)))$. 
        
        On the other hand, $\cG(\cF(\cG(Y_1)))\cong\cG(N\boxtimes_A Y_1)$ using Lemma \ref{lemma:CLR} as before, and by Lemma \ref{lemma:Ncomposition},
        \begin{equation*}
            N\boxtimes_A Y_1 \cong Y_1\oplus (M\boxtimes_A Y_1),
        \end{equation*}
        where
        \[
        0 \longrightarrow (M^k_{1,1}\otimes\Pi_2(-t)) \boxtimes_A Y_1 \longrightarrow M\boxtimes_A Y_1 \longrightarrow (M^k_{1,2}\otimes\Pi_1(-t/2))\boxtimes_A Y_1 \longrightarrow 0
        \]
        is exact because $Y_1$ is rigid in $\cC_A$. Thus we can compute the multiplicity of $Z$ in $\cG(\cF(\cG(Y_1)))$ explicitly.

   We may assume that $Z = \sigma^\ell(\cD_{r,s}^+)$ for some $\ell\in\ZZ$ and $1\leq r\leq u-1$, $1\leq s\leq v-1$. Because $ \cG(M_{r,s}^k \otimes \Pi_{\ell-1}(\nu_{r,s}))\cong\sigma^\ell(\cE_{u-r,v-s}^-)$ surjects onto $\sigma^\ell(\cD_{r,s}^+)$, 
   we take $Y_1 = M_{r,s}^k \otimes \Pi_{\ell-1}(\nu_{r,s})$.
   Then
        \begin{align*}
        \cG((M^k_{1,1}\otimes\Pi_2(-t))\boxtimes_A Y_1) & \cong \begin{cases}
            \sigma^{\ell+2}(\cE_{\lambda_{r,s+2}, \Delta_{r,s}}) & \text{if}\;\;1\leq s\leq v-2\\
            \sigma^{\ell+2}(\cE^-_{r,v-1}) & \text{if}\;\;s=v-1
        \end{cases},\nonumber\\
        \cG((M^k_{1,2}\otimes\Pi_1(-t/2))\boxtimes_A Y_1) & \cong 
        \begin{cases}
        \sigma^{\ell+1}(\cE^-_{u-r,v-2}) & \text{if}\;\; s=1\\
           \sigma^{\ell+1}(\cE_{\lambda_{r,s+1},\Delta_{r,s-1}})\oplus \sigma^{\ell+1}(\cE_{u-r,v-s-1}^-)  & \text{if}\;\; 2\leq s \leq v-2\\
          \sigma^{\ell+1}(\cE_{\lambda_{r,s+1},\Delta_{r,v-2}}) & \text{if}\;\; s = v-1
        \end{cases} .
        \end{align*}
If $1\leq s\leq v-2$, then $Z$ is the socle of $\sigma^{\ell+1}(\cE_{u-r,v-s-1}^-)$, and if $s=v-1$, then $Z$ is the socle of $\sigma^{\ell+2}(\cE_{r,v-1}^-)$.
        Since $Z$ is also the top of $\cG(Y_1)$,
        it follows that $Z$ has multiplicity $2$ in $\cG(\cF(\cG(Y_1)))$, contradicting the assumption that $X\ncong Y_2$. Thus $X \cong Y_2$ and therefore $X$ is local.
\end{proof}

Now that Steps 1 and 2 are complete, condition (1) in Theorem \ref{rigidity-of-C-from-CAloc} holds for $\cC=\cC^{\mathrm{wt}}_k(\sl_2)$ and $A=\Vir_{c_k}\otimes\Pi(0)$, and therefore $\cC_A$ is rigid by \cite[Theorem 3.14]{CMSY}. It remains to check condition (2) in Theorem \ref{rigidity-of-C-from-CAloc}, and since evaluations $e_X: X'\boxtimes X\rightarrow V$ in $\cC$ are always surjective for $X\neq 0$ (since they are non-zero and $V=L_k(\sl_2)$ is simple) we just need to complete Step 3 and show that for any simple object $X$ in $\cC$, any non-zero morphism from $\cF(X')$ to $\cF(X)^*$ in $\cC_A$ has to be an isomorphism. To prove this, we need more details about $\cC_A$ as an abelian category. We start with a consequence of Lemma \ref{lemma:CLR}, Lemma \ref{lemma:Ncomposition}, and the fusion rules \eqref{eqn:A-fusion-rules} in $\cC_A^\loc$:
\begin{cor}\label{cor:fusionwithA}
For $1\leq r \leq u-1$, $1\leq s \leq v-1$, $\ell \in \ZZ$ and $\lambda \in \CC$, 
\[
A \boxtimes \cG(M^k_{r, s} \otimes \Pi_\ell(\lambda)) \cong \cG(M^k_{r, s} \otimes \Pi_\ell(\lambda)) \oplus M_{r, s, \lambda, \ell}
\]
where the object $M_{r, s, \lambda,\ell}$ in $\cC$ fits into a short exact sequence
    \[
    0 \rightarrow \cG(M^k_{r, s} \otimes \Pi_{\ell+2}(\lambda - t)) \rightarrow M_{r, s, \lambda,\ell} \rightarrow 
     \cG(M^k_{r, s - 1} \otimes \Pi_{\ell+1}(\lambda -t/2)) \oplus \cG(M^k_{r, s + 1} \otimes \Pi_{\ell+1}(\lambda-t/2)) \rightarrow 0,
    \]
    with the convention that $M^k_{r, 0} = M^k_{r, v} = 0$.
\end{cor}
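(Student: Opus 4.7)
The plan is to translate the desired decomposition of $A \boxtimes \cG(M^k_{r,s}\otimes \Pi_\ell(\lambda))$ in $\cC$ into a tensor product computation inside $\cC_A$, using the identity $A \boxtimes \cG(Y) \cong \cG(N \boxtimes_A Y)$ from Lemma~\ref{lemma:CLR} applied to $Y = M^k_{r,s}\otimes \Pi_\ell(\lambda)$. The structure of $N = \cF(A)$ is completely controlled by Lemma~\ref{lemma:Ncomposition}, which gives $N \cong A \oplus M$ in $\cC_A$ together with a short exact sequence presenting $M$ as an extension of $M^k_{1,2}\otimes \Pi_1(-t/2)$ by $M^k_{1,1}\otimes \Pi_2(-t)$. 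So the whole proof is an exercise in tensoring this known data with $Y$ and reading off the result via $\cG$.

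First I would split off the $A$-summand: $N \boxtimes_A Y \cong Y \oplus (M \boxtimes_A Y)$, and under $\cG$ the first summand yields the copy of $\cG(M^k_{r,s}\otimes \Pi_\ell(\lambda))$ appearing on the right-hand side of the claim. Then, setting $M_{r,s,\lambda,\ell} := \cG(M \boxtimes_A Y)$, I need only realize this as an extension of the stated form. For this, note that $Y$ lies in $\cC_A^{\loc}$, which is rigid by Theorem~\ref{thm:CAloc-rigid}; since dual objects exist in $\cC_A^{\loc}$ and the inclusion $\cC_A^{\loc}\hookrightarrow\cC_A$ is a full tensor subcategory, $Y$ is rigid and hence flat in $\cC_A$. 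Tensoring the short exact sequence for $M$ with $Y$ therefore preserves exactness, producing the desired short exact sequence
\[
0 \to (M^k_{1,1}\otimes \Pi_2(-t))\boxtimes_A Y \to M\boxtimes_A Y \to (M^k_{1,2}\otimes \Pi_1(-t/2))\boxtimes_A Y \to 0
\]
in $\cC_A$.

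Finally, the two outer terms are computed with the fusion rules \eqref{eqn:A-fusion-rules} in $\cC_A^{\loc}$. Since $M^k_{1,1}$ is the unit of $\cC_{c_k}^\Vir$, the kernel simplifies to $M^k_{r,s}\otimes \Pi_{\ell+2}(\lambda - t)$. For the quotient I use the Virasoro fusion product $M^k_{1,2}\boxtimes M^k_{r,s}$; from \eqref{eqn:Vir-fus-rules} the selection rules force $r''=r$, and the $s''$-sum collapses to $s-1$ and $s+1$ subject to the constraints $1\leq s''\leq v-1$, giving $M^k_{1,2}\boxtimes M^k_{r,s} \cong M^k_{r,s-1}\oplus M^k_{r,s+1}$ with the boundary convention $M^k_{r,0}=M^k_{r,v}=0$. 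Combined with $\Pi_1(-t/2)\boxtimes\Pi_\ell(\lambda)\cong\Pi_{\ell+1}(\lambda-t/2)$, this identifies the quotient with $(M^k_{r,s-1}\oplus M^k_{r,s+1})\otimes \Pi_{\ell+1}(\lambda-t/2)$. Applying the exact functor $\cG$ then yields the claim. I do not anticipate any real obstacle: the argument is a direct bookkeeping exercise combining Lemmas~\ref{lemma:CLR} and \ref{lemma:Ncomposition}, rigidity of $\cC_A^{\loc}$, and the known fusion rules.
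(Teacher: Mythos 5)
Your proposal is correct and follows exactly the route the paper intends: the corollary is stated there as a direct consequence of Lemma \ref{lemma:CLR}, the decomposition $N\cong A\oplus M$ from Lemma \ref{lemma:Ncomposition} (with exactness of $-\boxtimes_A Y$ justified by rigidity of $Y$ in $\cC_A^{\loc}$, just as in the proof of Theorem \ref{thm:simple-local}), and the fusion rules \eqref{eqn:A-fusion-rules}. Your bookkeeping of the Virasoro and $\Pi(0)$ fusion products, including the boundary cases $s=1$ and $s=v-1$, matches the stated convention $M^k_{r,0}=M^k_{r,v}=0$.
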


We now construct and describe a projective cover for each simple object in $\cC_A$:

\begin{theorem}\label{thm:R-structure}
For any $1\leq r\leq u-1$, $1\leq s\leq v-1$, $\lambda+\ZZ \in \CC/\ZZ$, and $\ell \in \ZZ$,
    there is a projective and injective module $R_{r,s, \lambda, \ell}$ in $\cC_A$ with the following Loewy diagram, 
    \begin{center}
\begin{tikzpicture}[scale=1]
\node (top) at (0,3) [] {$ M^k_{r, s} \otimes \Pi_\ell(\lambda)$};
\node (left) at (-3,0) [] {$M^k_{r, s - 1} \otimes \Pi_{\ell+1}(\lambda -t/2)$};
\node (right) at (3,0) [] {$M^k_{r, s + 1} \otimes \Pi_{\ell+1}(\lambda -t/2)$};
\node (bottom) at (0,-3) [] {$M^k_{r, s} \otimes \Pi_{\ell+2}(\lambda - t)$};
\draw[->, thick] (top) -- (left);
\draw[->, thick] (top) -- (right);
\draw[->, thick] (left) -- (bottom);
\draw[->, thick] (right) -- (bottom);
\node (label) at (0,0) [circle, inner sep=2pt, color=white, fill=black!50!] {$R_{r, s, \lambda, \ell}$};
\end{tikzpicture}
\end{center}
with the convention that $M^k_{r, 0} = M^k_{r, v} = 0$. 
\end{theorem}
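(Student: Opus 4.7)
The plan is to construct $R_{r,s,\lambda,\ell}$ by inducing a suitable projective object of $\cC$, and then read off its composition factors via Corollary \ref{cor:fusionwithA} and its Loewy filtration via Frobenius reciprocity. Concretely, let $Y=M^k_{r,s}\otimes\Pi_\ell(\lambda)$ and pick a projective object $P$ of $\cC$ together with a surjection $P\twoheadrightarrow\cG(Y)$: when $\lambda\neq \nu_{r,s},\nu_{u-r,v-s}\pmod{\ZZ}$ the restriction $\cG(Y)=\sigma^{\ell+1}(\cE_{2\lambda-k,\Delta_{r,s}})$ is itself simple and projective in $\cC$, so I would take $P=\cG(Y)$; in the atypical case I would take $P$ to be an appropriate spectral flow of one of the projective covers $\cP_{r',s'}$ recalled in Section~2.2 (chosen so that the top of $P$ in $\cC$ maps onto the top of $\cG(Y)$). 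In both cases I set $R_{r,s,\lambda,\ell}:=\cF(P)$.

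Projectivity of $R_{r,s,\lambda,\ell}$ is immediate, since $\cF$ is left adjoint to the exact functor $\cG$ (Lemma \ref{reciprocity}) and therefore preserves projectives. Injectivity follows once $\cC_A$ has been shown to be rigid, which is the output of the first two steps of the global strategy (Lemma \ref{lemma:Ncomposition}, Theorem \ref{thm:simple-local}, and \cite[Theorem 3.14]{CMSY}); in a locally finite rigid $\CC$-linear tensor category with enough projectives, the classes of projective and injective objects coincide, so $R_{r,s,\lambda,\ell}$ is also injective.

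The list of composition factors follows by applying Corollary \ref{cor:fusionwithA} to the restriction $\cG(R_{r,s,\lambda,\ell})=A\boxtimes P$ in $\cC$: the summand $A\boxtimes\cG(Y)$ contributes $\cG(Y)$ together with the three-composition-factor object $M_{r,s,\lambda,\ell}$ whose factors are precisely $\cG(M^k_{r,s\pm 1}\otimes\Pi_{\ell+1}(\lambda-t/2))$ and $\cG(M^k_{r,s}\otimes\Pi_{\ell+2}(\lambda-t))$, under the convention $M^k_{r,0}=M^k_{r,v}=0$. The Loewy positions are then read off by Frobenius reciprocity: for any simple $X$ in $\cC_A$, the identification
\[
\Hom_{\cC_A}(R_{r,s,\lambda,\ell},X)\cong \Hom_\cC(P,\cG(X))
\]
pins down the top (using the known top of $P$ in $\cC$), and an analogous argument using injectivity, or equivalently the contragredient-dual description, pins down the socle; the middle layer is then forced by matching composition multiplicities, while the two middle summands $s\pm 1$ are separated by the fusion rules \eqref{eqn:A-fusion-rules} coming from Theorem \ref{thm:CAloc-rigid}.

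The main obstacle is verifying that the arrows in the diagram — that is, the non-split extensions between consecutive Loewy layers — are actually non-zero in $\cC_A$. These extensions ultimately trace back to the non-split short exact sequence $0\to V\to A\to\sigma(\cD_{1,1}^+)\to 0$ in $\cC$ and to the non-split sequence describing the summand $M$ of $N$ in Lemma \ref{lemma:Ncomposition}; showing that these non-splittings survive induction (in both the typical and atypical cases, and in the boundary cases $s=1$ and $s=v-1$ where one middle factor degenerates to zero) is where the detailed case analysis will have to be carried out.
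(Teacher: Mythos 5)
Your overall template (induce, then use Corollary \ref{cor:fusionwithA} and Frobenius reciprocity to read off factors and tops) matches the paper's generic-case argument, but there are two concrete gaps that break the proposal as written. First, the atypical case: you cannot set $R_{r,s,\lambda,\ell}:=\cF(P)$ with $P$ a spectral flow of a projective cover $\cP_{r',s'}$. The module $\cP_{r',s'}$ already has four composition factors in $\cC$ (two copies of $\cE^-$-type modules), so by Corollary \ref{cor:fusionwithA} its induction has far more than the four factors in the claimed Loewy diagram, and by Frobenius reciprocity its top is not simple (the simple $\cD^+_{r',s'}$ occurs in $\cG(X)$ for two distinct simple local $X$). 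So $\cF(P)$ is not the indecomposable projective in question, and extracting and analyzing the correct summand is precisely the work your sketch omits. The paper avoids this entirely: it constructs $R_{r,s,\lambda,\ell}$ only for $\lambda\notin\QQ$ as $\cF(\cG(M^k_{r,s}\otimes\Pi_\ell(\lambda)))$, and then obtains \emph{all} $\lambda\in\QQ$ (atypical and rational-typical alike) by tensoring with an invertible local module $M^k_{1,1}\otimes\Pi_0(\mu)$, $\mu\notin\QQ$, transporting the Loewy diagram via \cite[Proposition 2.5(7)]{CKLR} and projectivity via the tensor-ideal property. This simple-current twist is the missing key idea; note also that the correct case split is $\lambda\in\QQ$ versus $\lambda\notin\QQ$, not typical versus atypical, because your Frobenius-reciprocity analysis of the Loewy layers needs \emph{all four} composition factors $M^k_{r,s}\otimes\Pi_\ell(\lambda)$, $M^k_{r,s\pm1}\otimes\Pi_{\ell+1}(\lambda-t/2)$, $M^k_{r,s}\otimes\Pi_{\ell+2}(\lambda-t)$ to restrict to projectives of $\cC$ so that $\cG(R_{r,s,\lambda,\ell})$ splits as a direct sum; for rational typical $\lambda$ the shifted parameters $\lambda-t/2$, $\lambda-t$ can land on atypical points and the splitting fails.

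Second, the socle. Frobenius reciprocity controls morphisms \emph{out of} induced modules, hence tops; it does not give an ``analogous argument'' for morphisms into $R_{r,s,\lambda,\ell}$, so it does not pin down the socle. The paper's route is to prove $R_{r,s,\lambda,\ell}^*\cong R_{r,s,t-\lambda,-\ell-2}$ by exhibiting morphisms in both directions between these two indecomposable projectives covering the same simple top and invoking Fitting's lemma; simplicity of the socle of $R_{r,s,\lambda,\ell}$ then follows from simplicity of the top of its dual. Without this (or an equivalent) step, the bottom of your Loewy diagram, and hence the exact shape of the middle layer, is not established. Your closing remark that the non-splitness of the extensions is the remaining obstacle is accurate, but in the paper this is resolved by the $\Hom_{\cC_A}(R_{r',s',\lambda',\ell'},R_{r,s,\lambda,\ell})$ computations together with the duality statement above, not by tracing the non-split sequence for $A$ through induction.
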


\begin{proof}
We first consider the case $\lambda\notin\QQ$ and set $R_{r, s, \lambda, \ell} = \cF(\cG(M^k_{r, s} \otimes \Pi_\ell(\lambda)))$. Then $\cG(M^k_{r, s} \otimes \Pi_\ell(\lambda))\cong\sigma^{\ell+1}(\cE_{2\lambda-k,\Delta_{r,s}})$ is projective in $\cC$, and since the induction of a projective object is projective by Frobenius reciprocity, $R_{r, s, \lambda, \ell}$ is projective in $\cC_A$. Since
$\cC_A$ is rigid, $R_{r,s,\lambda,\ell}$ is also injective in $\cC_A$ by \cite[Proposition 6.1.3]{EGNO}.
Moreover, since $\lambda\notin\QQ$, all three or four composition factors of $A\boxtimes\cG(M^k_{r,s}\otimes\Pi_\ell(\lambda))$ in Corollary \ref{cor:fusionwithA} are projective in $\cC$, and thus
  \begin{equation}\label{eq:fusiongenericlambda}
    \begin{split}
    \cG(R_{r,s,\lambda,\ell}) & \cong 
    \cG(M^k_{r, s} \otimes \Pi_\ell(\lambda)) \oplus \cG(M^k_{r, s} \otimes \Pi_{\ell+2}(\lambda - t))\\ 
    &\qquad\oplus \cG(M^k_{r, s - 1} \otimes \Pi_{\ell+1}(\lambda -t/2)) \oplus \cG(M^k_{r, s + 1} \otimes \Pi_{\ell+1}(\lambda-t/2)).
     \end{split}
    \end{equation} 
    We use Frobenius reciprocity to determine the top of $R_{r, s, \lambda, \ell}$: since $\cG(M^k_{r, s} \otimes \Pi_\ell(\lambda))$ is simple in $\cC$,
    \[
    \Hom_{\cC_A}(R_{r, s, \lambda, \ell}, M^k_{r', s'} \otimes \Pi_{\ell'}(\lambda')) \cong \Hom_{\cC}(\cG(M^k_{r, s} \otimes \Pi_\ell(\lambda)), \cG(M^k_{r', s'} \otimes \Pi_{\ell'}(\lambda')))
    \]
is non-zero and one-dimensional if and only if $(r, s, \ell, \lambda) = (r',s', \ell', \lambda')$.
Thus $\text{top}(R_{r, s, \lambda, \ell}) \cong M^k_{r, s} \otimes \Pi_\ell(\lambda)$.

For the remaining composition factors of $R_{r,s,\lambda,\ell}$, Frobenius reciprocity and \eqref{eq:fusiongenericlambda} imply that 
\begin{equation*}
    \Hom_{\cC_A}(R_{r', s', \lambda', \ell'}, R_{r, s, \lambda, \ell}) \cong
        \Hom_{\cC}(\cG(M^k_{r', s'} \otimes \Pi_{\ell'}(\lambda')), \cG(R_{r,s\lambda,\ell}))
\end{equation*}
is non-zero and one-dimensional for $\lambda'\notin\QQ$ precisely in the cases that $(r',s',\lambda',\ell')$ is equivalent to
\begin{equation*}
    (r,s,\lambda,\ell),\quad (r,s,\lambda-t,\ell+2),\quad (r,s-1,\lambda-t/2,\ell+1),\quad (r,s+1,\lambda-t/2,\ell+1),
\end{equation*}
where the third case does not occur if $s=1$, and the fourth does not occur if $s=v-1$. Thus $R_{r,s,\lambda,\ell}$ contains the simple quotient $M^k_{r',s'}\otimes\Pi_{\ell'}(\lambda')$ of $R_{r',s',\lambda',\ell'}$ as a subquotient in these three or four cases, and by \eqref{eq:fusiongenericlambda}, this exhausts the composition factors of $R_{r,s,\lambda,\ell}$.

Since $R_{r, s, \lambda-t, \ell+2}$ and $R_{r, s, \lambda, \ell}$ only have $M^k_{r, s} \otimes \Pi_{\ell+2}(\lambda-t)$ as a common composition factor, 
the image of the non-zero $\cC_A$-morphism $R_{r,s,\lambda-t,\ell+2}\rightarrow R_{r,s,\lambda,\ell}$ is a submodule of
$R_{r, s, \lambda, \ell}$ isomorphic to $M^k_{r, s} \otimes \Pi_{\ell+2}(\lambda-t)$. Let $Q_{r, s, \lambda, \ell} = R_{r, s, \lambda, \ell}/M^k_{r, s} \otimes \Pi_{\ell+2}(\lambda-t)$. Since $M^k_{r, s} \otimes \Pi_{\ell+2}(\lambda-t)$ is not in $\mathrm{top}(R_{r, s, \lambda, \ell})$, the tops of $Q_{r, s, \lambda, \ell}$ and $R_{r, s, \lambda, \ell}$ coincide. This is consistent with three possible Loewy diagrams for $Q_{r, s, \lambda, \ell}$:
    \begin{center}
\begin{tikzpicture}[scale=1]
\node (top) at (0,2.5) [] {$ M^k_{r, s} \otimes \Pi_\ell(\lambda)$};
\node (left) at (-2,0.5) [] {$M^k_{r, s - 1} \otimes \Pi_{\ell+1}(\lambda -\frac{t}{2})$};
\node (right) at (2,0.5) [] {$M^k_{r, s + 1} \otimes \Pi_{\ell+1}(\lambda -\frac{t}{2})$};
\draw[->, thick] (top) -- (left);
\draw[->, thick] (top) -- (right);
\node (top2) at (6,3) [] {$M^k_{r, s} \otimes \Pi_\ell(\lambda)$};
\node (middle2) at (6,1.5) [] {$M^k_{r, s - 1} \otimes \Pi_{\ell+1}(\lambda -\frac{t}{2})$};
\node (bottom2) at (6, 0) [] {$M^k_{r, s + 1} \otimes \Pi_{\ell+1}(\lambda -\frac{t}{2})$};
\draw[->, thick] (top2) -- (middle2);
\draw[->, thick] (middle2) -- (bottom2);
\node (top3) at (10,3) [] {$ M^k_{r, s} \otimes \Pi_\ell(\lambda)$};
\node (middle3) at (10,1.5) [] {$M^k_{r, s + 1} \otimes \Pi_{\ell+1}(\lambda -\frac{t}{2})$};
\node (bottom3) at (10, 0) [] {$M^k_{r, s - 1} \otimes \Pi_{\ell+1}(\lambda -\frac{t}{2})$};
\draw[->, thick] (top3) -- (middle3);
\draw[->, thick] (middle3) -- (bottom3);
\end{tikzpicture} 
\end{center}
But since Frobenius reciprocity and \eqref{eq:fusiongenericlambda} again imply that 
\[
\Hom_{\cC_A}(R_{r,s\pm 1,\lambda-t/2,\ell+1}, Q_{r,s,\lambda,\ell})\cong\Hom_\cC(\cG(M^k_{r,s\pm1}\otimes\Pi_{\ell+1}(\lambda-t/2)), \cG(Q_{r,s,\lambda,\ell}))\neq 0
\]
for both sign choices (or for one sign choice when $s=1$ or $v-1$), and since $R_{r,s\pm 1,\lambda-t/2,\ell+1}$ and $Q_{r,s,\lambda,\ell}$ only have $M^k_{r,s\pm 1}\otimes\Pi_{\ell+1}(\lambda-\frac{t}{2})$ as a common factor, $Q_{r,s,\lambda,\ell}$ must contain $M^k_{r,s\pm 1}\otimes\Pi_{\ell+1}(\lambda-\frac{t}{2})$ as a submodule for both sign choices. Hence, the diagram on the left is the Loewy diagram of $Q_{r,s,\lambda,\ell}$.

To complete the determination of the Loewy diagram of $R_{r,s,\lambda,\ell}$, we just need to show that the socle of $R_{r,s,\lambda,\ell}$ contains only $M^k_{r,s}\otimes\Pi_{\ell+2}(\lambda-t)$, equivalently the top of $R_{r,s,\lambda,\ell}^*$ contains only $M^k_{r,s}\otimes\Pi_{\ell+2}(\lambda-t)^*\cong M^k_{r,s}\otimes\Pi_{-\ell-2}(t-\lambda)$. It is enough to prove that
\begin{equation}\label{eqn:R-dual}
    R_{r,s,\lambda,\ell}^*\cong R_{r,s,t-\lambda,-\ell-2},
\end{equation}
since $t-\lambda\notin\QQ$ and thus we have already shown that $\mathrm{top}(R_{r,s,t-\lambda,-\ell-2})\cong M^k_{r,s}\otimes\Pi_{-\ell-2}(t-\lambda)$. Indeed, since $R_{r,s,t-\lambda,-\ell-2}$ and $R_{r,s,\lambda,\ell}^*$ are both projective (the latter by \cite[Proposition 6.1.3]{EGNO}), there are $\cC_A$-morphisms $f: R_{r,s,t-\lambda,-\ell-2}\rightarrow R_{r,s,\lambda,t}^*$ and $g: R_{r,s,\lambda,t}^*\rightarrow R_{r,s,t-\lambda,-\ell-2}$ such that the diagrams
\begin{equation*}
\begin{tikzcd}[column sep=2pc]
& R_{r,s,t-\lambda,-\ell-2} \arrow[d, two heads] \arrow[ld, "f"'] \\
R_{r,s,\lambda,\ell}^* \arrow[r, two heads] & M^k_{r,s}\otimes\Pi_{-\ell-2}(t-\lambda) 
    \end{tikzcd},\qquad
\begin{tikzcd}[column sep=2pc]
& R_{r,s,\lambda,\ell}^* \arrow[d, two heads] \arrow[ld, "g"'] \\
R_{r,s,t-\lambda,-\ell-2} \arrow[r, two heads] & M^k_{r,s}\otimes\Pi_{-\ell-2}(t-\lambda) 
    \end{tikzcd}
\end{equation*}
commute. Moreover, $R_{r,s,\lambda,\ell}$ and $R_{r,s,t-\lambda,-\ell-2}$ are indecomposable since they have simple tops, and thus $R_{r,s,\lambda,t}^*$ is indecomposable as well. Thus $f\circ g$ and $g\circ f$ are either isomorphisms or nilpotent by Fitting's Lemma. The commutative diagrams imply they cannot be nilpotent, so they are isomorphisms. In particular, $f$ is both injective and surjective and thus an isomorphism. This completes the proof of the theorem in the case $\lambda\notin\QQ$.

For $\lambda \in \QQ$, we define $R_{r,s,\lambda,\ell} =(M^k_{1,1}\otimes\Pi_0(\mu))\boxtimes_A R_{r,s,\lambda',\ell}$ for $\mu,\lambda'\notin\QQ$ such that $\mu+\lambda' =\lambda$ mod $\ZZ$. Since $M^k_{1, 1} \otimes \Pi_{0}(\mu)$ is invertible in $\cC_A$, \cite[Proposition 2.5(7)]{CKLR} says that the Loewy diagram of $R_{r,s,\lambda,\ell}$ is obtained by replacing each composition factor $X$ in the Loewy diagram of $R_{r,s,\lambda',\ell}$ with $(M^k_{1,1}\otimes\Pi_0(\mu))\boxtimes_A X$. Thus $R_{r,s,\lambda,\ell}$ has the Loewy diagram indicated in the theorem statement, and $R_{r,s,\lambda,\ell}$ is projective in $\cC_A$ because projective objects in a rigid tensor category form a tensor ideal. Then $R_{r,s,\lambda,\ell}$ is also injective in $\cC_A$ by \cite[Proposition 6.1.3]{EGNO}. Note that $R_{r,s,\lambda,\ell}$ does not depend up to isomorphism on the choice of $\mu$ and $\lambda'$ since any choice yields a projective cover of $M^k_{r,s}\otimes\Pi_{\ell}(\lambda)$ in $\cC_A$, and projective covers are unique up to isomorphism.
 \end{proof}

 Note that the proof of \eqref{eqn:R-dual} in the case $\lambda\notin\QQ$ now works for all $\lambda\in\CC$ since by Theorem \ref{thm:R-structure}, $R_{r,s,\lambda,\ell}^*$ and $R_{r,s,t-\lambda,-\ell-2}$ are indecomposable projective objects of $\cC_A$ with isomorphic simple tops for all $\lambda\in\CC$.
 
\begin{theorem}\label{thm:indsimple1}
If $\lambda\neq\nu_{r,s},\nu_{u-r,v-s}\;\mathrm{mod}\;\ZZ$, then $\cF(\cG(M^k_{r, s} \otimes \Pi_{\ell}(\lambda))) \cong R_{r, s, \lambda, \ell}$ and any non-zero $\cC_A$-morphism from $\cF(\cG(M^k_{r, s} \otimes \Pi_{\ell}(\lambda))')$ to $\cF(\cG(M^k_{r, s} \otimes \Pi_{\ell}(\lambda)))^*$ is an isomorphism.
\end{theorem}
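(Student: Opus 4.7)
The plan is to identify both source and target of the morphism as the same indecomposable projective module $R_{r,s,t-\lambda,-\ell-2}$, and then to show this module has one-dimensional endomorphism ring, which forces every non-zero element of the Hom space to be an isomorphism.

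First I will establish the isomorphism $\cF(\cG(M^k_{r,s}\otimes\Pi_\ell(\lambda)))\cong R_{r,s,\lambda,\ell}$. Since $\lambda\not\equiv\nu_{r,s},\nu_{u-r,v-s}\pmod{\ZZ}$, the restriction $\cG(M^k_{r,s}\otimes\Pi_\ell(\lambda))\cong\sigma^{\ell+1}(\cE_{2\lambda-k,\Delta_{r,s}})$ is a typical simple weight module, hence projective in $\cC$ by Section \ref{subsec:affine-sl2-wt-mods}. Frobenius reciprocity (Lemma \ref{reciprocity}) then implies that $\cF$ sends projectives to projectives, so $\cF(\cG(M^k_{r,s}\otimes\Pi_\ell(\lambda)))$ is projective in $\cC_A$. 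Applying Frobenius reciprocity once more,
\[
\Hom_{\cC_A}(\cF(\cG(M^k_{r,s}\otimes\Pi_\ell(\lambda))), M^k_{r',s'}\otimes\Pi_{\ell'}(\lambda')) \cong \Hom_{\cC}(\cG(M^k_{r,s}\otimes\Pi_\ell(\lambda)), \cG(M^k_{r',s'}\otimes\Pi_{\ell'}(\lambda'))),
\]
and since the source here is typical simple while Remark \ref{rem:onetone} shows such a module embeds into $\cG(M^k_{r',s'}\otimes\Pi_{\ell'}(\lambda'))$ only when $M^k_{r',s'}\otimes\Pi_{\ell'}(\lambda')\cong M^k_{r,s}\otimes\Pi_\ell(\lambda)$, the Hom is one-dimensional precisely when $(r',s',\ell',\lambda'\bmod\ZZ)=(r,s,\ell,\lambda\bmod\ZZ)$. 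Thus $\cF(\cG(M^k_{r,s}\otimes\Pi_\ell(\lambda)))$ is projective with simple top $M^k_{r,s}\otimes\Pi_\ell(\lambda)$, and is therefore isomorphic to $R_{r,s,\lambda,\ell}$ by uniqueness of projective covers.

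Next I identify the source and target in the theorem. By \eqref{eq:contragedientdual typical}, $\cG(M^k_{r,s}\otimes\Pi_\ell(\lambda))'\cong\cG(M^k_{r,s}\otimes\Pi_{-\ell-2}(t-\lambda))$. A short calculation using $\nu_{r,s}+\nu_{u-r,v-s}=t-1$ shows that the hypothesis $\lambda\not\equiv\nu_{r,s},\nu_{u-r,v-s}\pmod\ZZ$ is equivalent to $t-\lambda\not\equiv\nu_{r,s},\nu_{u-r,v-s}\pmod\ZZ$, so the first step applied to $(-\ell-2,t-\lambda)$ in place of $(\ell,\lambda)$ gives $\cF(\cG(M^k_{r,s}\otimes\Pi_\ell(\lambda))')\cong R_{r,s,t-\lambda,-\ell-2}$. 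For the target, combining the first step with \eqref{eqn:R-dual}, which the remark following Theorem \ref{thm:R-structure} extends to all $\lambda\in\CC$, yields $\cF(\cG(M^k_{r,s}\otimes\Pi_\ell(\lambda)))^*\cong R_{r,s,\lambda,\ell}^*\cong R_{r,s,t-\lambda,-\ell-2}$.

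The main work, and the step I expect to require the most care, is proving $\End_{\cC_A}(R)\cong\CC$ where $R:=R_{r,s,t-\lambda,-\ell-2}$. By Theorem \ref{thm:R-structure}, $R$ is indecomposable with simple top $T=M^k_{r,s}\otimes\Pi_{-\ell-2}(t-\lambda)$, and since the three or four composition factors listed there are pairwise non-isomorphic (they differ in $s$ or in the $\Pi$-index and weight), $T$ has multiplicity one in $R$. Given any $f\in\End_{\cC_A}(R)$, the induced endomorphism of the simple top is a scalar $c$ by Schur's lemma, so $f-c\cdot\id_R$ vanishes on the top and therefore has image contained in $\mathrm{rad}(R)$. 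If this image were non-zero, then as a quotient of $R$ it would have $T$ as its top, making $T$ a composition factor of $\mathrm{rad}(R)$ and contradicting the multiplicity-one property. Hence $f=c\cdot\id_R$, and the Hom space in the theorem has dimension one. Since the second step provides an isomorphism between source and target, the Hom space contains a non-zero element, and every non-zero element is a scalar multiple of this isomorphism, hence is itself an isomorphism.
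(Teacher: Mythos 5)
Your proof is correct, and its overall skeleton --- identify $\cF(\cG(M^k_{r,s}\otimes\Pi_\ell(\lambda)))$ with the indecomposable projective $R_{r,s,\lambda,\ell}$, identify both $\cF(\cG(\cdots)')$ and $\cF(\cG(\cdots))^*$ with $R_{r,s,t-\lambda,-\ell-2}$ via \eqref{eq:contragedientdual typical} and \eqref{eqn:R-dual}, and finish by showing the latter has one-dimensional endomorphism ring --- is the same as the paper's. Two sub-steps are argued differently. For the first statement, the paper handles the remaining case $\lambda\in\QQ$ by producing a surjection $R_{r,s,\lambda,\ell}\twoheadrightarrow\cF(\cG(M^k_{r,s}\otimes\Pi_\ell(\lambda)))$ from projectivity of $R_{r,s,\lambda,\ell}$ and then comparing lengths via Corollary \ref{cor:fusionwithA}; you instead observe that every typical simple module (rational weight or not) is projective in $\cC$, so the induced module is itself projective with simple top and must be the projective cover. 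This is cleaner and treats all typical $\lambda$ uniformly, at the cost of needing the Hom computation against \emph{all} simples of $\cC_A$, which implicitly uses Theorem \ref{thm:simple-local} to know these are exactly the modules $M^k_{r',s'}\otimes\Pi_{\ell'}(\lambda')$. For $\End_{\cC_A}(R_{r,s,t-\lambda,-\ell-2})\cong\CC$, the paper computes directly via Frobenius reciprocity and Corollary \ref{cor:fusionwithA}, whereas you give the standard abstract argument for an indecomposable module whose simple top occurs with multiplicity one; both are valid, and your check that the composition factors in Theorem \ref{thm:R-structure} are pairwise non-isomorphic (which ultimately rests on $u$ and $v$ being coprime, so not both even) is precisely the input the abstract route requires. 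Your explicit verification that the typicality condition is preserved under $\lambda\mapsto t-\lambda$, via $\nu_{r,s}+\nu_{u-r,v-s}=t-1$, is a detail the paper leaves implicit.
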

\begin{proof}
The first statement holds if $\lambda\notin\QQ$ by the definition of $R_{r,s,\lambda,\ell}$, but we still need to consider the case that $\lambda\in\QQ$ but not equal to $\nu_{r,s},\nu_{u-r,v-s}$ mod $\ZZ$. As in the proof of Theorem \ref{thm:R-structure}, because $\cG(M^k_{r, s} \otimes \Pi_{\ell}(\lambda))$ is simple in $\cC$, Frobenius reciprocity implies that $\mathrm{top}(\cF(\cG(M^k_{r, s} \otimes \Pi_{\ell}(\lambda)))) \cong M^k_{r, s} \otimes \Pi_{\ell}(\lambda)$. Moreover, Corollary \ref{cor:fusionwithA} forces the multiplicity of $M^k_{r, s} \otimes \Pi_{\ell}(\lambda)$ in $\cF(\cG(M^k_{r, s} \otimes \Pi_{\ell}(\lambda)))$ to be $1$.

Since $R_{r,s,\lambda,\ell}$ is projective in $\cC_A$ and also has top isomorphic to $M^k_{r, s} \otimes \Pi_{\ell}(\lambda)$, there is a non-zero $\cC_A$-morphism $f: R_{r,s,\lambda,\ell}\rightarrow \cF(\cG(M^k_{r, s} \otimes \Pi_{\ell}(\lambda)))$ whose image contains $M^k_{r, s} \otimes \Pi_{\ell}(\lambda)$ as a simple quotient. Thus $f$ must be surjective since otherwise $\cF(\cG(M^k_{r, s} \otimes \Pi_{\ell}(\lambda)))$ would have a simple quotient not isomorphic to $M^k_{r, s} \otimes \Pi_{\ell}(\lambda)$. Moreover, Corollary \ref{cor:fusionwithA} implies that $A\boxtimes\cG(M^k_{r, s} \otimes \Pi_{\ell}(\lambda))$ and $\cG(R_{r,s,\lambda,\ell})$ have the same length in $\cC$, so $f$ is also injective and thus an isomorphism. This proves the first statement.

Now the first statement, \eqref{eq:contragedientdual typical}, and \eqref{eqn:R-dual} imply that 
\begin{align*}
    \cF(\cG(M^k_{r, s} \otimes \Pi_{\ell}(\lambda))') & \cong\cF(\cG(M^k_{r,s}\otimes\Pi_{-\ell-2}(t-\lambda))) \cong R_{r,s,t-\lambda,-\ell-2},\\
    \cF(\cG(M^k_{r, s} \otimes \Pi_{\ell}(\lambda)))^* & \cong R_{r,s\lambda,\ell}^*\cong R_{r,s,t-\lambda,-\ell-2}.
\end{align*}
Then the second statement follows because
\begin{equation*}
    \End_{\cC_A}(R_{r,s,t-\lambda,-\ell-2})\cong\Hom_\cC(\cG(M^k_{r,s}\otimes\Pi_{-\ell-2}(t-\lambda)),\cG(R_{r,s,t-\lambda,-\ell-2}))\cong\CC
\end{equation*}
by Frobenius reciprocity and Corollary \ref{cor:fusionwithA}.
\end{proof}

We turn to the atypical case. Since the module $R_{r,s,\nu_{r,s},\ell}$ for $1\leq r\leq u-1$, $1\leq s\leq v-1$, and $\ell\in\ZZ$ is projective and has $M^k_{r,s}\otimes\Pi_\ell(\nu_{r,s})$ as its unique simple quotient, $\Hom_{\cC_A}(R_{r,s,\nu_{r,s},\ell}, R_{r',s',\lambda',\ell'})$ for any $(r',s',\lambda',\ell')$ has the same dimension as the multiplicity of $M^k_{r,s}\otimes\Pi_\ell(\nu_{r,s})$ in $R_{r',s',\lambda',\ell'}$. In particular, there is a unique (up to scaling) $\cC_A$-morphism
\begin{equation*}
    f_{r,s,\ell}: R_{r,s,\nu_{r,s},\ell}\longrightarrow R_{r,s-1,\nu_{r,s}+t/2,\ell-1} =  R_{r,s-1,\nu_{r,s-1},\ell-1}
\end{equation*}
for $2\leq s\leq v-1$. For such values of $s$ we define $M_{r,s,\ell}\subseteq R_{r,s-1,\nu_{r,s-1},\ell-1}$ to be the image of $f_{r,s,\ell}$; inspection of the Loewy diagrams in Theorem \ref{thm:R-structure} shows that $M_{r,s,\ell}$ has Loewy diagram
\begin{center}
\begin{tikzpicture}[scale=1]
\node (top1) at (0,2) [] {$M^k_{r, s} \otimes \Pi_\ell(\nu_{r, s})$};
\node (bottom1) at (0,0) [] {$M^k_{r, s - 1} \otimes \Pi_{\ell+1}(\nu_{r, s+1})$};
\draw[->, thick] (top1) -- (bottom1);
\node (label) at (-3,1) [circle, inner sep=2pt, color=white, fill=black!50!] {$M_{r, s, \ell}$};
\end{tikzpicture}
\end{center}
  for $2\leq s\leq v-1$, and there is a non-split short exact sequence
    \begin{equation}\label{eqn:Mrsl-exact-seq}
    0 \longrightarrow M_{r, s+1, \ell+1} \longrightarrow R_{r, s, \nu_{r, s}, \ell} \longrightarrow M_{r, s, \ell} \longrightarrow 0
    \end{equation}
for $2\leq s\leq v-2$. We then define $M_{r,1,\ell}=\mathrm{Coker}\,f_{r,2,\ell+1}\cong M^k_{r,1}\otimes\Pi_\ell(\nu_{r,1})$ and $M_{r,v,\ell}=\mathrm{Ker}\,f_{r,v-1,\ell-1}\cong M^k_{r,v-1}\otimes\Pi_{\ell+1}(\nu_{r,v+1})$ for $1\leq r\leq u-1$ and $\ell\in\ZZ$, so that \eqref{eqn:Mrsl-exact-seq} holds for all $1\leq s\leq v-1$. Since $M^k_{r,v-1}=M^k_{u-r,1}$ and $\nu_{r,v+1}=\nu_{u-r,1}$ mod $\ZZ$, we can also write $M_{r,v,\ell}\cong M^k_{u-r,1}\otimes\Pi_{\ell+1}(\nu_{u-r,1})$.

\begin{theorem}\label{thm:ind-simple-atypical}
    For $1\leq r\leq u-1$, $1\leq s\leq v-1$, and $\ell\in\ZZ$, $\cF(\sigma^\ell(\cD_{r,s}^+))\cong M_{r,s+1,\ell}$, and any non-zero $\cC_A$-morphism from $\cF(\sigma^\ell(\cD_{r,s}^+)')$ to $\cF(\sigma^\ell(\cD_{r,s}^+))^*$ is an isomorphism.
\end{theorem}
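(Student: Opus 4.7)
The plan is to identify $\cF(\sigma^\ell(\cD_{r,s}^+))$ by finding its simple top via Frobenius reciprocity and controlling its $A$-length through the restriction functor $\cG$, and then to analyze the two sides of the second assertion as length-$\leq 2$ $A$-modules with matching Loewy data and conclude via a Schur-type argument. Concretely, for the first statement I would tensor the exact sequence \eqref{eqn:A-exact-seq-sec-4} with $\sigma^\ell(\cD_{r,s}^+)$ and apply Lemma \ref{lem:check-4b}, which is valid because $\sigma^\ell(\cD_{r,s}^+)$ embeds into the $A$-module $\tau(\sigma^\ell(\cD_{r,s}^+))$ from Remark \ref{rem:onetone}. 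Combining Proposition \ref{prop:tens-prod-and-spec-flow}, Theorem \ref{thm:nonzero}, and the spectral-flow relations \eqref{eqn:HW-as-SF-of-LW}, \eqref{eqn:lambda-Delta-symmetries} reduces the key computation to
\[
\sigma(\cD_{1,1}^+)\boxtimes \sigma^\ell(\cD_{r,s}^+)\cong \sigma^{\ell+1}(\cD_{r,s+1}^+)\oplus\sigma^{\ell+2}(\cE_{\lambda_{r,s+2},\Delta_{r,s}})
\]
for $1\leq s\leq v-2$ (and to $\sigma^{\ell+2}(\cD_{u-r,1}^+)$ for $s=v-1$), showing that $\cG(\cF(\sigma^\ell(\cD_{r,s}^+)))$ has $\cC$-length $3$ (resp.\ $2$).

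Frobenius reciprocity combined with the analysis of which simple $A$-modules admit $\sigma^\ell(\cD_{r,s}^+)$ as a $\cC$-submodule of their restriction then identifies the simple top of $\cF(\sigma^\ell(\cD_{r,s}^+))$ as $\tau(\sigma^\ell(\cD_{r,s}^+))$, whose restriction itself has $\cC$-length $2$. For $s=v-1$ this exhausts the restriction length, so $\cF(\sigma^\ell(\cD_{r,v-1}^+))$ equals its simple top $M_{r,v,\ell}$. For $1\leq s\leq v-2$, $\cF(\sigma^\ell(\cD_{r,s}^+))$ has $A$-length $2$ and is a quotient of the projective cover $R_{r,s+1,\nu_{r,s+1},\ell}$; Theorem \ref{thm:R-structure} produces exactly two length-$2$ quotients with the correct top, namely the one with typical socle $M^k_{r,s}\otimes\Pi_{\ell+1}(\nu_{r,s+2})$ ($\cC$-restriction length $3$) and the one with atypical socle $M^k_{r,s+2}\otimes\Pi_{\ell+1}(\nu_{r,s+2})$ ($\cC$-restriction length $4$). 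Only the first matches, and by its defining exact sequence this first quotient is $M_{r,s+1,\ell}$.

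For the second assertion, the isomorphism $(\sigma^\ell(\cD_{r,s}^+))'\cong\sigma^{-\ell}(\cD_{r,s}^-)$ together with \eqref{eqn:HW-as-SF-of-LW} rewrites the source as $\sigma^{-\ell-1}(\cD_{u-r,v-s-1}^+)$ for $s\leq v-2$ or $\sigma^{-\ell-2}(\cD_{r,v-1}^+)$ for $s=v-1$, so the first assertion gives $\cF(\sigma^\ell(\cD_{r,s}^+)')\cong M_{u-r,v-s,-\ell-1}$ or $M_{r,v,-\ell-2}$ respectively. The target $\cF(\sigma^\ell(\cD_{r,s}^+))^*\cong M_{r,s+1,\ell}^*$ is computed from the Deligne factorization $\cC_A^\loc\cong\cC^{\Vir}_{c_k}\boxtimes\cC^{\mathrm{wt}}_{\Pi(0)}$ of Theorem \ref{thm:CAloc-rigid}, using self-duality of $M^k_{r,s}$ and $\Pi_\ell(\lambda)^*\cong\Pi_{-\ell}(-\lambda)$, and the two Loewy diagrams match because $\nu_{u-r,v-s}+\nu_{r,s+2}=\nu_{u-r,v-s+1}+\nu_{r,s+1}=\frac{u-2-tv}{2}=-1$. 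For $s=v-1$, both sides are isomorphic simple $A$-modules and the claim is Schur's lemma. For $1\leq s\leq v-2$, a non-zero morphism $f: X\to Y$ between length-$2$ modules with isomorphic simple top $T$ and simple socle $S$ (with $T\not\cong S$) has $\ker(f)\in\{0,\mathrm{soc}(X)\}$; if $\ker(f)=\mathrm{soc}(X)$, the induced embedding $T\hookrightarrow Y$ would land in $\mathrm{soc}(Y)\cong S$, contradicting $T\not\cong S$; hence $f$ is injective and, by length, an isomorphism.

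The main technical obstacle is the Loewy analysis of $R_{r,s+1,\nu_{r,s+1},\ell}$ underlying the identification of $\cF(\sigma^\ell(\cD_{r,s}^+))$ with $M_{r,s+1,\ell}$: one must correctly distinguish the typical and atypical simple $A$-modules appearing in the middle layer of this projective cover, and recognize that only the typical middle factor yields a length-$2$ quotient of the correct $\cC$-restriction length, which must then coincide with $M_{r,s+1,\ell}$ by construction.
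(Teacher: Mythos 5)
Your proposal is correct, and its endgame coincides with the paper's: Frobenius reciprocity identifies the simple top $\tau(\sigma^\ell(\cD_{r,s}^+))$, the projective cover $R_{r,s+1,\nu_{r,s+1},\ell}$ from Theorem \ref{thm:R-structure} pins down the induced module among its length-two quotients, and the duality claim is settled by matching the (non-isomorphic) top and socle of $\cF(\sigma^\ell(\cD_{r,s}^+)')$ and $\cF(\sigma^\ell(\cD_{r,s}^+))^*$ and noting that a non-zero map between such non-split length-two modules cannot kill the socle. Where you genuinely diverge is in determining the composition factors of $\cF(\sigma^\ell(\cD_{r,s}^+))$: you compute the restriction $A\boxtimes\sigma^\ell(\cD_{r,s}^+)$ exactly, by tensoring \eqref{eqn:A-exact-seq-sec-4} with $\sigma^\ell(\cD_{r,s}^+)$ (legitimately exact via Lemma \ref{lem:check-4b}) and evaluating $\sigma(\cD_{1,1}^+)\boxtimes\sigma^\ell(\cD_{r,s}^+)$ from Theorem \ref{thm:nonzero} after spectral flow, which gives the precise $\cC$-length ($3$ for $1\leq s\leq v-2$, $2$ for $s=v-1$) in one step; the paper instead inducts \eqref{eqn:exact-seq-E+E-} to get a right-exact presentation and sandwiches the composition factors between the list for $\cF(\cG(M^k_{r,s}\otimes\Pi_{\ell-1}(\nu_{r,s})))$ coming from Corollary \ref{cor:fusionwithA} and the list for $\cF(\sigma^\ell(\cD_{u-r,v-s}^-))$. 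Your variant is somewhat more direct and avoids the list comparison, at the cost of one extra fusion computation; both ultimately rest on Theorem \ref{thm:nonzero}. In a full write-up you should make explicit (i) that the top has multiplicity one, which follows from Frobenius reciprocity together with the fact that all simples of $\cC_A$ are local (Theorem \ref{thm:simple-local}) and Remark \ref{rem:onetone}, and (ii) that the two middle Loewy factors of $R_{r,s+1,\nu_{r,s+1},\ell}$ are non-isomorphic, so that the length-two quotient with typical socle is unique and hence equal to $M_{r,s+1,\ell}$ via \eqref{eqn:Mrsl-exact-seq}.
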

\begin{proof}
    On the one hand, because the induction functor $\cF$ is right exact, the exact sequence \eqref{eqn:exact-seq-E+E-} in $\cC$ induces a right exact sequence
    \begin{equation*}
        \cF(\sigma^\ell(\cD_{u-r,v-s}^-))\longrightarrow\cF(\cG(M^k_{r,s}\otimes\Pi_{\ell-1}(\nu_{r,s})))\longrightarrow\cF(\sigma^\ell(\cD_{r,s}^+))\longrightarrow 0
    \end{equation*}
    in $\cC_A$. It follows from Corollary \ref{cor:fusionwithA} and the bijection $\tau:\mathrm{Irr}(\cC)\rightarrow\mathrm{Irr}(\cC_A)$ given in Remark \ref{rem:onetone} and Theorem \ref{thm:simple-local} that the composition factors of $\cF(\cG(M^k_{r,s}\otimes\Pi_{\ell-1}(\nu_{r,s})))$ are precisely
    \begin{equation}\label{eqn:ind-of-rsl-1}
M^k_{r,s}\otimes\Pi_{\ell-1}(\nu_{r,s}),\quad M^k_{r,s-1}\otimes\Pi_{\ell}(\nu_{r,s+1}),\quad M^k_{r,s+1}\otimes\Pi_\ell(\nu_{r,s+1}),\quad M^k_{r,s}\otimes\Pi_{\ell+1}(\nu_{r,s+2}).
    \end{equation}
    Thus the composition factors of $\cF(\sigma^\ell(\cD_{r,s}^+))$ form a subset of these three or four. Similarly, since
    \begin{equation*}
        \sigma^\ell(\cD_{u-r,v-s}^-)\cong\begin{cases}
            \sigma^{\ell-2}(\cD_{u-r,v-1}^+) & \text{if}\;s=1\\
            \sigma^{\ell-1}(\cD_{r,s-1}^+) & \text{if}\;2\leq s\leq v-1\\
        \end{cases},
    \end{equation*}
    the composition factors of $\cF(\sigma^\ell(\cD_{u-r,v-s}^-))$ form a subset of
    \begin{equation}\label{eqn:comp-factor-s=1}
        M^k_{u-r,v-1}\otimes\Pi_{\ell-3}(\nu_{u-r,v-1}),\quad M^k_{u-r,v-2}\otimes\Pi_{\ell-2}(\nu_{u-r,v}),\quad M^k_{u-r,v-1}\otimes\Pi_{\ell-1}(\nu_{u-r,v+1})
    \end{equation}
    if $s=1$ and a subset of
    \begin{equation}\label{eqn:comp-factor-s>1}
        M^k_{r,s-1}\otimes\Pi_{\ell-2}(\nu_{r,s-1}),\quad M^k_{r,s-2}\otimes\Pi_{\ell-1}(\nu_{r,s}),\quad M^k_{r,s}\otimes\Pi_{\ell-1}(\nu_{r,s}),\quad M^k_{r,s-1}\otimes\Pi_{\ell}(\nu_{r,s+1})
    \end{equation}
    if $2\leq s\leq v-1$. Comparing \eqref{eqn:ind-of-rsl-1} with \eqref{eqn:comp-factor-s=1} and \eqref{eqn:comp-factor-s>1}, and noting $M^k_{u-r,v-1}\otimes\Pi_{\ell-1}(\nu_{u-r,v+1})= M^k_{r,1}\otimes\Pi_{\ell-1}(\nu_{r,1})$ for the $s=1$ case, we see that $\cF(\sigma^\ell(\cD_{r,s}^+))$ must contain at least $M^k_{r,s+1}\otimes\Pi_\ell(\nu_{r,s+1})$ and $M^k_{r,s}\otimes\Pi_{\ell+1}(\nu_{r,s+2})$ as composition factors, both with multiplicity $1$. Note that these are precisely the composition factors of $M_{r,s+1,\ell}$, including the case $s=v-1$ (where the first of these composition factor vanishes and $M_{r,v,\ell}$ is simple).

    On the other hand, there is a non-zero (and unique up to scaling) $\cC$-morphism $\sigma^\ell(\cD_{r,s})\rightarrow\cG(M^k_{r',s'}\otimes\Pi_{\ell'}(\lambda')$ if and only if $(r',s',\lambda',\ell')$ is equivalent to $(r,s+1,\nu_{r,s+1},\ell)$ for $1\leq s\leq v-2$, or $(u-r,1,\nu_{u-r,1},\ell+1)$ for $s=v-1$. Thus by Frobenius reciprocity, $\mathrm{top}(\cF(\sigma^\ell(\cD_{r,s}^+)))\cong\mathrm{top}(M_{r,s+1,\ell})$. Let $P$ be the projective cover of $\mathrm{top}(M_{r,s+1,\ell})$ in $\cC_A$, that is, $P=R_{r,s+1,\nu_{r,s+1},\ell}$ if $1\leq s\leq v-2$ and $P=R_{u-r,1,\nu_{u-r,1},\ell+1}$ if $s=v-1$. Since $P$ is projective in $\cC_A$, there is a morphism $f:P\rightarrow\cF(\sigma^\ell(\cD_{r,s}^+))$ such that the diagram
    \begin{equation*}
\begin{tikzcd}[column sep=2pc]
& P \arrow[d, two heads] \arrow[ld, "f"'] \\
\cF(\sigma^\ell(\cD_{r,s}^+)) \arrow[r, two heads] & \mathrm{top}(M_{r,s+1,\ell})
    \end{tikzcd}
\end{equation*}
commutes. Moreover, $f$ is surjective as $\mathrm{Im}\,f$ contains the unique composition factor in $\mathrm{top}(\cF(\sigma^\ell(\cD_{r,s}^+)))$ (since all composition factors of $\cF(\sigma^\ell(\cD_{r,s}^+))$ have multiplicity $1$ by \eqref{eqn:ind-of-rsl-1}). Comparing the composition factors of $P$ from Theorem \ref{thm:R-structure} with the possible composition factors of $\cF(\sigma^\ell(\cD_{r,s}^+))$ in \eqref{eqn:ind-of-rsl-1}, we see that $\mathrm{Ker}\,f$ contains $M_{r,s+2,\ell+1}$ when $1\leq s\leq v-2$ and $M_{u-r,2,\ell+2}$ when $s=v-1$. Thus since $f$ is surjective and $\cF(\sigma^\ell(\cD_{r,s}^+))$ contains at least the composition factors of $M_{r,s+1,\ell}$, we conclude that
\begin{align*}
    \cF(\sigma^\ell(\cD_{r,s}^+)) & \cong R_{r,s+1,\nu_{r,s+1},\ell}/M_{r,s+2,\ell+1} \cong M_{r,s+1,\ell}
    \end{align*}
    if $1\leq s\leq v-2$, and
    \begin{align*}
    \cF(\sigma^\ell(\cD_{r,v-1}^+)) & \cong R_{u-r,1,\nu_{u-r,1},\ell+1}/M_{u-r,2,\ell+2}\cong M^k_{u-r,1}\otimes\Pi_{\ell+1}(\nu_{u-r,1})\cong M_{r,u,\ell},
\end{align*}
proving the first statement of the theorem.

For the second statement, we first calculate
\begin{align*}
    \cF(\sigma^{\ell}(\cD_{r,s}^+)') &\cong\cF(\sigma^{-\ell}(\cD_{r,s}^-) \cong\begin{cases}
        \cF(\sigma^{-\ell-1}(\cD_{u-r,v-s-1}^+)) & \text{if}\;1\leq s\leq v-2\\
        \cF(\sigma^{-\ell-2}(\cD_{r,v-1}^+)) & \text{if}\;s=v-1\\
    \end{cases}\nonumber\\
    &\cong \begin{cases}
        M_{u-r,v-s,-\ell-1} & \text{if}\;1\leq s\leq v-2\\
        M_{r,v,-\ell-2} & \text{if}\;s=v-1\\
    \end{cases} .
\end{align*}
Thus for $1\leq s\leq v-2$, using $M^k_{u-r,v-s}=M^k_{r,s}$ and $\nu_{u-r,v-s} =-\nu_{r,s+2}$ mod $\ZZ$, there is a non-split exact sequence
\begin{equation*}
    0\longrightarrow M^k_{r,s+1}\otimes\Pi_{-\ell}(-\nu_{r,s+1})\longrightarrow\cF(\sigma^\ell(\cD_{r,s}^+)')\longrightarrow M^k_{r,s}\otimes\Pi_{-\ell-1}(-\nu_{r,s+2})\longrightarrow 0,
\end{equation*}
where the submodule and quotient are non-isomorphic. On the other hand, by reversing the Loewy diagram of $M_{r,s+1,\ell}$ and taking the dual of each composition factor, we see that $\cF(\sigma^\ell(\cD_{r,s}^+))^*\cong M_{r,s+1,\ell}^*$ fits into a non-split short exact sequence with the same submodule and quotient. Thus any non-zero morphism from $\cF(\sigma^\ell(\cD_{r,s}^+)')$ to $\cF(\sigma^\ell(\cD_{r,s}^+))^*$ must be an isomorphism, since otherwise its image would have to be isomorphic to $M^k_{r,s}\otimes\Pi_{-\ell-1}(-\nu_{r,s+2})$, which is not a submodule of $\cF(\sigma^\ell(\cD_{r,s}^+))^*$. This proves the second statement of the theorem for $1\leq s\leq v-2$, and for $s=v-1$, we have
\begin{align*}
   \cF(\sigma^{\ell}(\cD_{r,v-1}^+)') &\cong  M_{r,v,-\ell-2}\cong M^k_{u-r,1}\otimes\Pi_{-\ell-1}(\nu_{u-r,1}) = M^k_{u-r,1}\otimes\Pi_{-\ell-1}(-\nu_{u-r,1})\nonumber\\
   &\cong (M^k_{u-r,1}\otimes\Pi_{\ell+1}(\nu_{u-r,1}))^*\cong M_{r,v,\ell}^*\cong\cF(\sigma^{\ell}(\cD_{r,v-1}^+))^*.
\end{align*}
Thus $\cF(\sigma^{\ell}(\cD_{r,v-1}^+)')$ and $\cF(\sigma^{\ell}(\cD_{r,v-1}^+))^*$ are isomorphic simple objects of $\cC_A$, hence any non-zero morphism between them is an isomorphism.    
\end{proof}

Theorems \ref{thm:indsimple1} and \ref{thm:ind-simple-atypical} complete Step 3 of the strategy, so condition (2) in Theorem \ref{rigidity-of-C-from-CAloc} holds for $\cC=\cC^{\mathrm{wt}}_k(\sl_2)$, $A=\Vir_{c_k}\otimes\Pi(0)$ when $k=-2+\frac{u}{v}$, $v\geq 3$. Thus we conclude:
\begin{theorem}\label{thm:rigidity-for-v>2}
    The tensor category $\cC^{\mathrm{wt}}_k(\sl_2)$ is rigid when $k=-2+\frac{u}{v}$ for coprime $u\in\ZZ_{\geq 2}$, $v\in\ZZ_{\geq 3}$.
\end{theorem}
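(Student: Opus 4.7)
The plan is to apply Theorem \ref{rigidity-of-C-from-CAloc} directly with $\cC=\cC^{\mathrm{wt}}_k(\sl_2)$ and $A=\Vir_{c_k}\otimes\Pi(0)$, since all three of its hypotheses have been prepared in the preceding subsections, and to assemble the verifications into a short proof.

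First I would record the ambient setup: $\cC$ is a locally finite $\CC$-linear abelian r-category by \cite{ACK, C1}, equipped with the ribbon twist $\theta=e^{2\pi i L_0}$ and dualizing functor given by contragredient modules; the unit morphism $\iota_A:V\hookrightarrow A$ is injective by Theorem \ref{thm: conformal embedding}; and $\theta_A^2=\id_A$ because $A$ is $\frac{1}{2}\ZZ$-graded by conformal weights (the $\Pi_\ell(\lambda)$ summands of $\Pi(0)$ that appear in the decomposition of $A$ have lowest conformal weight $\frac{k}{4}\ell^2$ by \eqref{eqn:conf-wt-Pi(0)}, and $\Vir_{c_k}$ is $\ZZ$-graded).

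Next I would invoke the three hypotheses of Theorem \ref{rigidity-of-C-from-CAloc} in turn. Condition (1) is the combination of Theorem \ref{thm:CAloc-rigid}, which gives rigidity and a complete classification of the simple objects of $\cC_A^{\loc}$, with Theorem \ref{thm:simple-local}, which shows every simple object of $\cC_A$ is already local. For condition (2), the evaluation $e_X:X'\boxtimes X\to V$ is non-zero for any simple $X$ (being the image of $\id_{X'}$ under the Grothendieck–Verdier adjunction isomorphism), and hence surjective since $V$ is simple; the statement that every non-zero $\cC_A$-morphism $\cF(X')\to\cF(X)^*$ is an isomorphism is provided by Theorem \ref{thm:indsimple1} in the typical case (when $X\cong\cG(M^k_{r,s}\otimes\Pi_\ell(\lambda))$ with $\lambda\neq\nu_{r,s},\nu_{u-r,v-s}$ mod $\ZZ$) and by Theorem \ref{thm:ind-simple-atypical} in the atypical case (when $X\cong\sigma^\ell(\cD_{r,s}^+)$). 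Condition (3) is exactly Corollary \ref{cor:condition-3-for-sl2}.

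With all three conditions verified, Theorem \ref{rigidity-of-C-from-CAloc} yields rigidity of $\cC^{\mathrm{wt}}_k(\sl_2)$. There is no real obstacle at this final assembly step; the genuine work was carried out in Sections \ref{sec:fusion} and \ref{subsec:v>2}, in particular the computation of $\cD_{1,1}^+\boxtimes\cD_{r,s}^-$ feeding into the determination of $N=\cF(A)$ in Lemma \ref{lemma:Ncomposition}, and the subsequent classification of simple objects of $\cC_A$ together with their projective covers $R_{r,s,\lambda,\ell}$ and inductions in Theorems \ref{thm:simple-local}, \ref{thm:R-structure}, \ref{thm:indsimple1}, and \ref{thm:ind-simple-atypical}.
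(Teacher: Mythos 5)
Your proposal is correct and follows essentially the same route as the paper: the theorem is the assembly of condition (1) from Theorems \ref{thm:CAloc-rigid} and \ref{thm:simple-local}, condition (2) from the surjectivity of $e_X$ (since $V$ is simple) together with Theorems \ref{thm:indsimple1} and \ref{thm:ind-simple-atypical}, and condition (3) from Corollary \ref{cor:condition-3-for-sl2}, fed into Theorem \ref{rigidity-of-C-from-CAloc}. The only quibble is your parenthetical on the grading of $A$: the Fock summands $\pi_{0,2n}$ of $\Pi(0)=\Pi_0(0)$ have conformal weight $n\in\ZZ$ by \eqref{eqn:conf-wt-Pi(0)}, so $A$ is in fact $\ZZ$-graded and $\theta_A^2=\id_A$ holds as you conclude.
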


\end{document}